\def\cal{\mathcal}
\newtheorem{Theorem}[equation]{Theorem}
\newtheorem{Corollary}[equation]{Corollary}
\newtheorem{Lemma}[equation]{Lemma}
\newtheorem{Proposition}[equation]{Proposition}
\theoremstyle{definition}
\newtheorem{Definition}[equation]{Definition}
\renewcommand\section{\@startsection{section}{1}%
  {\z@}{.7\linespacing\@plus\linespacing}{.5\linespacing}%
  {\reset@font\normalfont\bfseries\centering}}
\theoremstyle{remark}
\newtheorem{Remark}[equation]{Remark}
\newtheorem*{Acknowledgements}{Acknowledgements}
\numberwithin{equation}{section}
\numberwithin{figure}{section}
\newcommand{\thmref}[1]{Theorem~\ref{#1}}
\newcommand{\secref}[1]{\S\ref{#1}}
\newcommand{\lemref}[1]{Lemma~\ref{#1}}
\newcommand{\propref}[1]{Proposition~\ref{#1}}
\newcommand{\corref}[1]{Corollary~\ref{#1}}
\newcommand{\subsecref}[1]{\S\ref{#1}}
\newcommand{\Romnum}[1]{\expandafter\uppercase\expandafter{\romannumeral #1}} 
\newcommand{\Z}{\mathbb{Z}}
\newcommand{\R}{\mathbb{R}}
\newcommand{\C}{\mathbb{C}}
\newcommand{\HH}{\mathbb{H}}
\newcommand{\CP}{\operatorname{\C P}}
\newcommand{\RP}{\operatorname{\R P}}
\newcommand{\SO}{\operatorname{\rm SO}}
\newcommand{\OO}{\operatorname{\rm O}}
\newcommand{\U}{\operatorname{\rm U}}
\newcommand{\SU}{\operatorname{\rm SU}}
\newcommand{\Pin}{\mathrm{Pin}}
\newcommand{\Spin}{\mathrm{Spin}}
\newcommand{\Spinc}{\Spin^{c}}
\newcommand{\Spincm}{\Spin^{c_-}}
\newcommand{\SP}{\operatorname{\rm Sp}}
\DeclareMathOperator{\rank}{rank}
\DeclareMathOperator{\Hom}{Hom}
\DeclareMathOperator{\End}{End}
\DeclareMathOperator{\Ima}{Im}
\DeclareMathOperator{\im}{im}
\DeclareMathOperator{\coker}{coker}
\DeclareMathOperator{\ad}{ad}
\DeclareMathOperator{\sign}{sign}
\DeclareMathOperator{\ind}{ind}
\newcommand{\M}{{\cal M}} 
\newcommand{\G}{{\cal G}} 
\newcommand{\id}{\operatorname{id}}
\DeclareMathOperator{\SW}{SW}
\newcommand{\X}{\hat{X}}
\newcommand{\hU}{\hat{\U}}
\newcommand{\deux}{\{\pm 1\}}
\newcommand{\Os}{\mathscr{O}}
\begin{document}
\title[Real structures and the $\Pin^-(2)$-monopole equations]{Real structures and the $\Pin^-(2)$-monopole equations}
%
%
%
%
%
\author{Nobuhiro Nakamura}
\address{Department of Mathematics, Division of Liberal Arts,
Osaka Medical College,
2-7 Daigaku-machi, Takatsuki City, Osaka, 569-8686, Japan}
\email{mat002@osaka-med.ac.jp}
%
%
%
\begin{abstract}
We investigate  the $\Pin^-(2)$-monopole invariants of symplectic $4$-manifolds and K\"{a}hler surfaces with real structures. 
We prove a nonvanishing theorem for real symplectic $4$-manifolds which is an analogue of Taubes' nonvanishing theorem of the Seiberg--Witten invariants for symplectic $4$-manifolds. 
Furthermore, the Kobayashi--Hitchin type correspondence for real K\"{a}hler surfaces is given. 
\end{abstract}
%
%
\subjclass[2010]{57R57, 53C55, 53D05}
%
%
\maketitle
%
%
%
%
\section{Introduction}\label{sec:intro}
%
%
In the study of the Seiberg--Witten invariants, the computations of the invariants of K\"{a}hler surfaces are fundamental \cite{FM1,FM2,Brussee,Morgan, Teleman}, and they are based on a certain type of Kobayashi--Hitchin correspondence. 
On the other hand, Taubes'  works~\cite{Taubes1, Taubes2, Taubes3, Taubes4} on the Seiberg--Witten theory on symplectic $4$-manifolds begin with the nontriviality theorem~\cite{Taubes1} for the canonical $\Spinc$ structure.

The purpose of  this paper is to prove the theorems for $\Pin^-(2)$-monopole invariants~\cite{N2} parallel to the above results.
The $\Pin^-(2)$-monopole equations \cite{N1,N2} are a variant of the Seiberg--Witten equations twisted along a local system associated with a double covering $X\to\X$.
In fact,  an anti-linear involution $I$ on a $\Spinc$ structure $\mathfrak{s}$ on the double cover $X$ is defined, and the $\Pin^-(2)$-monopole theory on $\X$ can be identified with the $I$-invariant part of the Seiberg--Witten theory on $(X,\mathfrak{s})$.
We study K\"{a}hler surfaces and symplectic $4$-manifolds with {\it real structure}.
Our starting point is to understand the aforementioned $I$-action through the real structure.

Let us state our results more precisely.
Let $(X,\omega,\iota)$ be a closed {\it real symplectic $4$-manifold}, where $X$ is a closed smooth $4$-manifold $X$, $\omega$ is a symplectic form  and $\iota$ is an involution on $X$ such that $\iota^*\omega=-\omega$.
Let $J$ be a compatible almost complex structure such that $\iota_*\circ J=-J\circ\iota_*$.
Let $K$ be the canonical complex line bundle associated with $J$.  
We assume $(X,\omega,\iota)$ has {\it empty real part}, that is, the involution $\iota$ is {\it free}.
Let $\X$ be the quotient manifold $X/\iota$ and $\pi\colon X\to \X$ the projection.
Since $\iota$ induces an anti-linear involution on $K$, the quotient bundle $\hat{K}:=K/\iota$ is a nonorientable $\R^2$ bundle.
Let $\ell=X\times_{\deux}\Z$ be the local system associated to the double cover $X\to \X$.
Let $\ell_{\R}:=\ell\otimes\R =\det \hat{K}$.

The $\Pin^-(2)$-monopole equations are defined on a $\Spincm$ structure, which is a $\Pin^-(2)$-analogue of $\Spinc$ structure.
In the above situation, we define the {\it canonical $\Spincm$ structure} $\hat{\mathfrak{s}}_0$  associated with the real structure (\subsecref{subsec:canonical}).
The following theorem is an analogue of Taubes' nonvanishing theorem \cite{Taubes1}.
\begin{Theorem}\label{thm:canonical}
Suppose 
\begin{enumerate}  
\item $w_2(\X)+w_2(\hat{K})+ w_1(\ell_{\R})^2 =0$, 
\item $\pi^*\colon H^1(\X;\Z_2)\to H^1(X;\Z_2)$ is surjective.
\end{enumerate} 
Then there exists a unique canonical $\Spincm$ structure $\hat{\mathfrak{s}}_0$ on $X\to \X$. 
Furthermore, suppose $b_+^{\ell} = \dim H^+(\X;\ell_{\R})\geq 2$.
Then the $\Pin^-(2)$-monopole invariant  $\SW^{\Pin}(\X,\hat{\mathfrak{s}}_0)$ is $\pm 1$. 
\end{Theorem}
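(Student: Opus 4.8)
The plan is to reduce the $\Pin^-(2)$-monopole invariant $\SW^{\Pin}(\X,\hat{\mathfrak s}_0)$ to a Seiberg-Witten computation on the double cover $X$, where Taubes' nonvanishing theorem applies to the canonical $\Spinc$ structure of the symplectic manifold $(X,\omega)$. The first step is to unwind the construction of $\hat{\mathfrak s}_0$: the hypothesis (1) is precisely the obstruction-vanishing condition that allows a $\Spincm$ structure with the prescribed characteristic data, and (2) guarantees uniqueness because it forces the relevant $H^1(\X;\Z_2)$-torsor of lifts to be acted on transitively by classes pulled back from the base, so the $I$-action identifies them. One then checks that the pullback $\pi^*\hat{\mathfrak s}_0$ is the canonical $\Spinc$ structure $\mathfrak s_{\mathrm{can}}$ of $(X,\omega,J)$, with $\pi^*\hat K = K$, and that the anti-linear involution $I$ covering $\iota$ on the spinor bundles is compatible with the real structure; this is where the requirement $\iota^*\omega=-\omega$, $\iota_*J=-J\iota_*$ is used, since it makes $\iota$ act anti-holomorphically and hence $I$ send the canonical solution to itself.

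The second step is to run the standard Taubes perturbation argument \emph{equivariantly}. For the symplectic Seiberg-Witten equations one uses the perturbed equations with a large multiple $r\,\omega$ of the symplectic form in the curvature term; for $r\gg 0$ the only solution (up to gauge) on the canonical $\Spinc$ structure is the reducible-type solution built from the canonical section of $K^{-1}$, and it is nondegenerate with a sign-determined orientation, giving $\SW(X,\mathfrak s_{\mathrm{can}})=\pm1$. Because $\iota^*\omega=-\omega$, the perturbation $r\,\pi^*\hat\omega$ descends to a perturbation of the $\Pin^-(2)$-monopole equations on $\X$ (the sign flip is absorbed by the nonorientable twisting $\ell_\R$); equivalently, the perturbed $\Spinc$ equations on $X$ are $I$-equivariant. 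Thus the whole Taubes analysis can be carried out on the $I$-invariant part: the unique solution is $I$-invariant, so it descends to the unique $\Pin^-(2)$-monopole solution on $(\X,\hat{\mathfrak s}_0)$, and transversality/compactness for the equivariant moduli problem follow from Taubes' estimates applied $\iota$-equivariantly together with $b^\ell_+\ge 2$ to rule out reducibles and wall-crossing.

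The third step is bookkeeping for the sign. One must verify that the orientation of the $\Pin^-(2)$-monopole moduli space (fixed by a homology orientation of $H^1(\X;\R)\oplus H^+(\X;\ell_\R)$) matches the $I$-fixed part of Taubes' orientation on the Seiberg-Witten moduli space of $X$, so that the count is $\pm1$ rather than $0$; this is a linear-algebra comparison of determinant lines of the (equivariant) deformation complexes, using that $\pi^*$ is injective on the relevant cohomology by (2).

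The main obstacle I expect is the equivariant transversality and the identification of the descended moduli space with the $\Pin^-(2)$ one in a neighborhood of the Taubes solution: one needs that the $I$-invariant part of the linearized operator is surjective (not merely the full operator), i.e. that Taubes' nondegenerate solution is cut out transversally \emph{within} the $I$-invariant configuration space. This amounts to showing the cokernel of the linearization carries no $I$-anti-invariant part obstructing the count, which should follow from the explicit Weitzenböck/Kähler-type description of that linearization near the canonical solution, but it is the step requiring genuine work rather than formal transport of Taubes' theorem.
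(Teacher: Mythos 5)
Your proposal follows the paper's strategy closely: use conditions (1)--(2) to construct and uniquify the canonical $\Spincm$ structure, lift to the $I$-invariant Seiberg--Witten moduli space on the double cover $X$, and invoke Taubes' uniqueness and nondegeneracy for the perturbed equations at large $r$. The concern you flag in your last paragraph --- that the $I$-invariant part of the linearization might fail to be surjective --- does not require genuine work: the linearization $L$ at the Taubes solution is $I$-equivariant, and Taubes gives $\ker L = 0 = \coker L$, so $\ker L^I \subseteq \ker L = 0$, while surjectivity of $L^I$ onto the $I$-invariant codomain follows by averaging a preimage ($x \mapsto \tfrac12(x + Ix)$). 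Hence the downstairs solution is automatically cut out transversally within the $I$-invariant configuration space, which is exactly why the paper's own proof is so short: Taubes' uniqueness and nondegeneracy transport formally to the $I$-invariant (i.e.\ $\Pin^-(2)$-monopole) moduli space. One small slip in your write-up: the Taubes spinor $u_0$ is the constant section of the trivial summand $\underline{\C}$ of $W^+_0 = \underline{\C}\oplus K^{-1}$, not a ``canonical section of $K^{-1}$.''
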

\begin{Remark}
We refer the readers to \cite{N1,N2} for the generality of the $\Pin^-(2)$-monopole theory.
In general, the $\Pin^-(2)$-monopole invariant is a $\Z_2$-valued invariant.  
We can define its $\Z$-valued refinement in some cases, e.g.,  when the moduli space is $0$-dimensional and orientable.
The statement that $\SW^{\Pin}(\X,\hat{\mathfrak{s}}_0)=\pm 1$ means that the invariant is nonzero anyway, and if its $\Z$-valued refinement is defined, then its value is $\pm1$.
\end{Remark}
Like the ordinary Seiberg--Witten theory, the $\Pin^-(2)$-monopole theory has a symmetry of  conjugation (\subsecref{subsec:symm}).
The conjugate of the canonical $\Spincm$ structure $\hat{\mathfrak{s}}_0$ is the {\it anti-canonical $\Spincm$ structure} $\hat{\mathfrak{s}}_0\hat\otimes\hat{K}$ which is obtained by twisting $\hat{\mathfrak{s}}_0$ by $\hat{K}$.
\thmref{thm:canonical} with \corref{cor:conj} implies the following.
\begin{Corollary}\label{cor:anticanonical}
$\SW^{\Pin}(\X,\hat{\mathfrak{s}}_0\hat\otimes\hat{K} )= \pm 1$.
\end{Corollary}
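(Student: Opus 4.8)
The plan is to combine \thmref{thm:canonical} with the conjugation symmetry of the $\Pin^-(2)$-monopole invariant discussed in \subsecref{subsec:symm}. First I would recall that every $\Spincm$ structure $\hat{\mathfrak{s}}$ on $X\to\X$ admits a conjugate $\overline{\hat{\mathfrak{s}}}$ and that \corref{cor:conj} gives $\SW^{\Pin}(\X,\overline{\hat{\mathfrak{s}}})=\pm\SW^{\Pin}(\X,\hat{\mathfrak{s}})$, with equality in $\Z_2$ for the $\Z_2$-valued invariant and up to sign in $\Z$ when the $\Z$-valued invariant is defined. Since conjugation changes neither $\X$ nor the bundle $\hat{K}$ nor the local system $\ell_{\R}$, and since $b_+^{\ell}\ge 2$ depends only on $\X$ and $\ell_{\R}$, the standing hypotheses of \thmref{thm:canonical} continue to hold and all the invariants appearing below are well defined.

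The heart of the matter is the identification $\overline{\hat{\mathfrak{s}}_0}\cong\hat{\mathfrak{s}}_0\hat\otimes\hat{K}$. This is the $\Pin^-(2)$-analogue of the standard fact that on a symplectic (or K\"{a}hler) $4$-manifold the conjugate of the canonical $\Spinc$ structure $\mathfrak{s}_0$, whose determinant line bundle is $K^{-1}$, is the anti-canonical structure $\mathfrak{s}_0\otimes K$. To prove it I would return to the construction of $\hat{\mathfrak{s}}_0$ in \subsecref{subsec:canonical}, where it is built from the canonical $\Spinc$ structure on the double cover $X$ together with the anti-linear involution $I$ induced by $\iota$. Conjugating over $X$ sends $\mathfrak{s}_0$ to $\overline{\mathfrak{s}_0}\cong\mathfrak{s}_0\otimes K$, and one must check that $I$ is carried to the involution on $\mathfrak{s}_0\otimes K$ determined by the anti-linear $\iota$-action on $K$; descending to $\X$ then yields precisely $\hat{\mathfrak{s}}_0\hat\otimes\hat{K}$. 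I expect this compatibility check --- following how $I$ interacts simultaneously with charge conjugation and with the real structure on $K$, and matching the complex-line twist by $K$ with the $\R^2$-twist by $\hat{K}$ --- to be the main, though essentially routine, step.

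Granting this identification, the corollary follows at once. By \corref{cor:conj} and \thmref{thm:canonical},
\[
\SW^{\Pin}(\X,\hat{\mathfrak{s}}_0\hat\otimes\hat{K})
=\SW^{\Pin}(\X,\overline{\hat{\mathfrak{s}}_0})
=\pm\,\SW^{\Pin}(\X,\hat{\mathfrak{s}}_0)
=\pm 1 ,
\]
and the same chain read in $\Z_2$ yields the value $1$ for the $\Z_2$-valued invariant.
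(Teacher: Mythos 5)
Your argument is exactly the paper's: the identification $\hat{\mathfrak{s}}_0^c=\hat{\mathfrak{s}}_0\hat\otimes\hat{K}$ is Proposition~\ref{prop:symm}(3), and combining it with Corollary~\ref{cor:conj} and Theorem~\ref{thm:canonical} gives the result, which is precisely what the paper does (``Theorem~\ref{thm:canonical} with Corollary~\ref{cor:conj} immediately implies the following''). Your sketch of why the conjugate of the canonical $\Spincm$ structure is the anti-canonical one is the right idea and matches the content the paper delegates to Proposition~\ref{prop:symm}.
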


An $\OO(2)$ bundle $\hat{L}$  called {\it characteristic bundle} is associated with a $\Spincm$ structure.
Let $\tilde{c}_1(\hat{L})\in H^2(\X;\ell)$ be its $\ell$-coefficient Euler class.
Since $\iota^*\omega = -\omega$, there is a $\ell_{\R}$-valued self-dual closed $2$-form $\hat{\omega}\in \Omega^2(X;\ell_{\R})$ such that $\pi^*\hat{\omega}=\omega$.
The following is an analogue of \cite[Theorem 2]{Taubes2}.
\begin{Theorem}\label{thm:more}
Under the assumptions of \thmref{thm:canonical},
if the $\Pin^-(2)$-monopole invariant for a $\Spincm$ structure $\hat{\mathfrak{s}}$ on $X\to \X$  is nonzero, then its characteristic bundle $\hat{L}$ satisfies
\begin{equation}\label{eq:more}
\left|\tilde{c}_1(\hat{L})\cdot[\hat{\omega}]\right|\leq \tilde{c}_1(\hat{K})\cdot[\hat{\omega}],
\end{equation}
and the virtual dimension $d(\hat{\mathfrak{s}})$ of the moduli space is $0$.
\end{Theorem}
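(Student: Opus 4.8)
The plan is to pull the equations back to the double cover $X$, where the metric is almost-K\"ahler, and to run Taubes' a priori analysis there. Since $\iota_*\circ J=-J\circ\iota_*$ and $\iota^*\omega=-\omega$, the metric $g=\omega(\cdot,J\cdot)$ on $X$ is $\iota$-invariant, so $(X,g,\omega,J)$ is a closed almost-K\"ahler $4$-manifold carrying a free, anti-symplectic, $J$-antiholomorphic involution. A $\Pin^-(2)$-monopole configuration for $\hat{\mathfrak s}$ on $X\to\X$ is an $I$-invariant $\Spinc$ Seiberg--Witten configuration on $X$ for $\mathfrak s:=\pi^*\hat{\mathfrak s}$, and the perturbing form $r\hat\omega\in\Omega^2(X;\ell_{\R})$ pulls back to $r\omega\in\Omega^2(X)$. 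If $\SW^{\Pin}(\X,\hat{\mathfrak s})\ne 0$ then for a generic perturbation the $\Pin^-(2)$-monopole moduli space is non-empty; using the (Seiberg--Witten-type) compactness of these moduli spaces, I let generic perturbations converge to $r\hat\omega$ and extract a limit, obtaining for every $r>0$ a solution $(A_r,\Phi_r)$ of the $r\omega$-perturbed Seiberg--Witten equations on $X$ for $\mathfrak s$.

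Next I apply Taubes' estimates on $X$. Writing $\Phi_r=\alpha_r\oplus\beta_r$ for the splitting determined by $J$ (so $\alpha_r\in\Gamma(E)$, $\beta_r\in\Gamma(E\otimes K^{-1})$, with $c_1(\det\mathfrak s)=2c_1(E)-c_1(K)$, and $c_1(K)=\pi^*\tilde c_1(\hat K)$, $c_1(\det\mathfrak s)=\pi^*\tilde c_1(\hat L)$), the Weitzenb\"ock formula together with the maximum principle for $|\Phi_r|^2$ gives the pointwise bounds $|\alpha_r|^2\le 1+c/r$ and $|\beta_r|^2\le c/r^2$ with $c$ depending only on $(X,g,\omega,J)$. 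Feeding this into the curvature equation, pairing with $[\omega]$, using Chern--Weil and $\int_X\omega\wedge\omega>0$, and letting $r\to\infty$ gives $c_1(\det\mathfrak s)\cdot[\omega]\le c_1(K)\cdot[\omega]$. By the conjugation symmetry of the theory (\subsecref{subsec:symm}) we have $\SW^{\Pin}(\X,\overline{\hat{\mathfrak s}})=\pm\SW^{\Pin}(\X,\hat{\mathfrak s})\ne 0$ with characteristic class $-\tilde c_1(\hat L)$, so the same argument applied to $\overline{\hat{\mathfrak s}}$ gives the reverse inequality, whence $|c_1(\det\mathfrak s)\cdot[\omega]|\le c_1(K)\cdot[\omega]$. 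Finally, since $\pi$ is an oriented double cover and $\ell\otimes\ell_{\R}\cong\R$, one has $c_1(\det\mathfrak s)\cdot[\omega]=2\,\tilde c_1(\hat L)\cdot[\hat\omega]$ and $c_1(K)\cdot[\omega]=2\,\tilde c_1(\hat K)\cdot[\hat\omega]$, and \eqref{eq:more} follows.

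For the dimension, $d(\hat{\mathfrak s})\ge 0$ is immediate, since a negative virtual dimension makes a generic moduli space empty and the invariant zero. For $d(\hat{\mathfrak s})\le 0$ I would push Taubes' analysis of the $r\to\infty$ limit further: when \eqref{eq:more} is strict, the rescaled solutions concentrate $\iota$-equivariantly onto a non-empty $J$-holomorphic current in $X$ Poincar\'e dual to $c_1(E)$ together with vortex data along it, and combining this with the equivariant index relation $d(\mathfrak s)=2\,d(\hat{\mathfrak s})$ (valid because $\iota$ is free) and the transversality of these limiting vortex configurations for $r\gg 0$ should force $c_1(\det\mathfrak s)^2=c_1(K)^2$, i.e.\ $d(\hat{\mathfrak s})=0$; when \eqref{eq:more} is an equality one instead gets $\hat{\mathfrak s}\in\{\hat{\mathfrak s}_0,\ \hat{\mathfrak s}_0\hat\otimes\hat K\}$, which have virtual dimension $0$ by \thmref{thm:canonical} and \corref{cor:anticanonical}.

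I expect this last step to be the main obstacle. Because $b^\ell_+\ge 2$, there is no light-cone shortcut to the bound $\tilde c_1(\hat L)^2\le\tilde c_1(\hat K)^2$, so one genuinely has to transplant Taubes' large-$r$, ``$\SW\Rightarrow$ Gromov'' structure theory---equivariant Uhlenbeck-type compactness for the $I$-invariant solutions and transversality of the limiting vortex moduli---to the $\Pin^-(2)$-monopole, real-symplectic setting. The first two paragraphs, by contrast, are essentially Taubes' original arguments run verbatim on the $\iota$-invariant almost-K\"ahler manifold $X$, once the $I$-invariant solutions are known to exist for every $r$.
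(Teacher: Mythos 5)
Your first two paragraphs are sound and essentially reproduce the paper's argument for the inequality~\eqref{eq:more}: pass to $I$-invariant solutions of the $r\omega$-perturbed Seiberg--Witten equations on the cover $X$, run Taubes' $C^0$-estimates, integrate against $\omega$ and let $r\to\infty$. The paper instead cites Kotschick/Taubes, which already gives the two-sided bound $|c_1(L)\cdot[\omega]|\le c_1(K)\cdot[\omega]$ directly; your alternative of obtaining the lower bound by applying the conjugation symmetry $\SW^{\Pin}(\X,\hat{\mathfrak s}^c)=\pm\SW^{\Pin}(\X,\hat{\mathfrak s})$ is a legitimate substitute. The scaling relation $c_1(L)\cdot[\omega]=2\,\tilde c_1(\hat L)\cdot[\hat\omega]$, $c_1(K)\cdot[\omega]=2\,\tilde c_1(\hat K)\cdot[\hat\omega]$ is also correct and is implicit in the paper (which just observes $L=\pi^*\hat L$).

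The gap you flag in the dimension step is genuine, but your diagnosis of what is needed to close it is off: you do \emph{not} need an equivariant version of Taubes' $\mathrm{SW}\Rightarrow\mathrm{Gr}$ structure theory. The paper's point is that once you know an $I$-invariant solution to the $r\omega$-perturbed equations exists on $X$ for every $r$, you simply forget the $I$-symmetry and apply Taubes' \emph{non-equivariant} large-$r$ analysis on the symplectic $4$-manifold $X$ with the $\Spinc$ structure $\mathfrak s=\pi^*\hat{\mathfrak s}$: this yields an embedded symplectic curve $C\subset X$ Poincar\'e dual to $c_1(E)=\tfrac12(c_1(K)+c_1(L))$, and (after possibly blowing down embedded $(-1)$-spheres to reduce to the minimal case) the argument behind Theorem 0.2(6) of Taubes~\cite{Taubes3} then gives $d(\mathfrak s)=0$ for the Seiberg--Witten problem on $X$. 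Since $\iota$ is free, $d(\hat{\mathfrak s})=\tfrac12 d(\mathfrak s)=0$. In particular, neither an $\iota$-equivariant concentration result nor a new transversality theorem for $I$-invariant vortices is required; the equivariance enters only to supply the input solutions, not to run the Gromov limit. Your case split between strict inequality and equality in~\eqref{eq:more} is likewise unnecessary once this route is taken.
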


Suppose further that $(X,\omega)$ is a compact K\"{a}hler surface and $\iota$ is an antiholomorphic free involution.
In such a case, we prove a certain kind of Kobayashi--Hitchin correspondence (\secref{sec:realKL}).
In fact, the $\Pin^-(2)$-monopole moduli space for $\X$ can be identified with the $I$-invariant part of the space of simple holomorphic pairs of holomorphic structures on a line bundle with nonzero holomorphic sections.
(Furthermore, a simple holomorphic pair is identified with an effective divisor on $X$.)
By using this description, we can compute the $\Pin^-(2)$-monopole invariants for the quotient manifolds of several kinds of K\"{a}hler surfaces.
The following is an analogue of \cite[Theorem 7.4.1]{Morgan}.

\begin{Theorem}\label{thm:general}
Let $X$ be a minimal K\"{a}hler surface of general type. 
Suppose $\iota\colon X\to X$ is an anti-holomorphic involution without fixed points satisfying the assumptions in \thmref{thm:canonical}. 
Then 
\[
\SW^{\Pin}(\X,\hat{\mathfrak{s}})=\left\{\begin{aligned}
\pm 1 \quad & \hat{\mathfrak{s}}=\hat{\mathfrak{s}}_0\text{ or } \hat{\mathfrak{s}}_0\hat\otimes\hat{K}\\
0\ \quad & \text{ otherwise }
\end{aligned}\right.
\]
\end{Theorem}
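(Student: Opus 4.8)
The plan is to reduce the computation of $\SW^{\Pin}(\X,\hat{\mathfrak{s}})$ for a minimal K\"ahler surface of general type to a problem about $I$-invariant effective divisors on $X$, using the Kobayashi-Hitchin correspondence established in \secref{sec:realKL}. First I would recall that, by that correspondence, a $\Spincm$ structure $\hat{\mathfrak{s}}$ with nonzero invariant corresponds to an $I$-invariant simple holomorphic pair, equivalently to an $I$-invariant effective divisor $D$ on $X$ whose associated line bundle is the pullback $\pi^*\hat{L}$ of the characteristic bundle (up to the usual twist by a square root of $K$). The key numerical constraint comes from \thmref{thm:more}: any $\hat{\mathfrak{s}}$ with nonzero invariant satisfies $|\tilde c_1(\hat L)\cdot[\hat\omega]|\le \tilde c_1(\hat K)\cdot[\hat\omega]$ and has moduli space of virtual dimension $0$. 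Pulling these back to $X$, one gets $|c_1(L)\cdot[\omega]|\le c_1(K_X)\cdot[\omega]$, where $[\omega]$ is a K\"ahler class and $L$ is the $\iota$-equivariant line bundle upstairs; I would then run the standard Witten-type argument on $X$: for the canonical and anti-canonical structures the only effective divisor is the empty one (resp.\ a canonical divisor), giving invariant $\pm 1$, and for every intermediate structure one shows the moduli space is either empty or, by the general-type hypothesis (so $K_X$ big and nef, $p_g>0$, and pluricanonical rigidity), the relevant linear systems and their $I$-invariant parts are such that the contribution vanishes.

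More concretely, the second step is the dimension count. The virtual dimension being $0$ forces a relation of the form $c_1(L)^2 - c_1(L)\cdot c_1(K_X) = 0$ on $X$ (the $\iota$-quotient of the usual Seiberg-Witten dimension formula $d = \tfrac14(c_1(L)^2 - (2\chi+3\sigma))$ specialized to the invariant part), i.e.\ $D\cdot(D-K_X)=0$ for the corresponding divisor $D$. Combined with the Taubes inequality $0\le c_1(L)\cdot[\omega]\le c_1(K_X)\cdot[\omega]$ (after fixing the sign using the conjugation symmetry \corref{cor:conj}), one is exactly in the setting of \cite[Theorem 7.4.1]{Morgan}: on a minimal surface of general type the only divisor classes $D$ with $0\le D\cdot[\omega]\le K_X\cdot[\omega]$ and $D\cdot(D-K_X)=0$ are $D=0$ and $D\sim K_X$ (this uses minimality, so $K_X$ nef and $K_X^2>0$, together with the Hodge index theorem to pin down $D$ from the two numerical equalities). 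Therefore the only $\Spincm$ structures that can have nonzero invariant are $\hat{\mathfrak{s}}_0$ and $\hat{\mathfrak{s}}_0\hat\otimes\hat K$; for these, \thmref{thm:canonical} and \corref{cor:anticanonical} already give the value $\pm 1$. This completes the argument modulo the subtlety that we work with $I$-invariant objects rather than the full Seiberg-Witten moduli spaces.

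That last subtlety is where I expect the main obstacle. On $X$ itself the Seiberg-Witten moduli space for $K_X$ (or its conjugate) might a priori be positive-dimensional or non-reduced, and one must check that passing to the $I$-invariant part does not accidentally create extra components or change the parity/sign of the count. The point to verify is that the $I$-fixed locus inside the moduli space of holomorphic pairs on $X$ — which by the Kobayashi-Hitchin correspondence of \secref{sec:realKL} \emph{is} the $\Pin^-(2)$-monopole moduli space for $\X$ — is still a single reduced point (a $0$-dimensional manifold, orientable, as required in the Remark after \thmref{thm:canonical}) for $\hat{\mathfrak{s}}_0$ and $\hat{\mathfrak{s}}_0\hat\otimes\hat K$, and is empty for all other classes. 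For the empty divisor this is immediate; for a canonical divisor on a general-type surface one invokes that $|K_X|$ is $I$-equivariantly rigid at the relevant point, i.e.\ the real part of $H^0(X,K_X)$ behaves as in the complex case, so that the $I$-invariant obstruction space vanishes. I would carry this out by examining the $\iota$-action on $H^0(X,K_X)$ and $H^1(X,\Os)$ directly, using that $\iota$ is anti-holomorphic and free together with assumption (2) of \thmref{thm:canonical}, which controls how cohomology of $X$ descends to $\X$.

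Finally, for the ``otherwise'' case I would argue by contradiction: if some $\hat{\mathfrak{s}}\ne\hat{\mathfrak{s}}_0,\hat{\mathfrak{s}}_0\hat\otimes\hat K$ had nonzero invariant, the two numerical identities above plus Hodge index would force the corresponding $D$ to be $0$ or $K_X$, hence $\hat{\mathfrak{s}}$ to be one of the two excluded structures, a contradiction. The bookkeeping of signs and the identification $\tilde c_1(\hat L)\leftrightarrow c_1(L)$ under $\pi^*$ is routine given the framework of \secref{sec:realKL}, so the real content is the rigidity check in the previous paragraph.
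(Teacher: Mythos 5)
Your proposal follows the same Morgan-style route as the paper (Kobayashi--Hitchin correspondence plus Hodge index), so the overall architecture is right, but two of the pivots you rely on are not the ones the paper actually uses and one of your worries is pointed in the wrong direction.

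First, the paper's numerical argument does not invoke \thmref{thm:more}'s Taubes inequality at all. Instead it uses the chain: $\SW^{\Pin}\neq 0$ forces $d(\mathfrak{s})\geq 0$, hence $L^2\geq K_X^2>0$; nonvanishing plus the K\"{a}hler correspondence produces an $I$-invariant \emph{effective} divisor in $|E|$ (or in $|K-E|$ depending on the sign of $L\cdot\omega$), and nefness of $K_X$ then gives $K_X\cdot E\geq 0$. That sign information, fed into a one-variable Hodge-index argument (finding $t$ with $(K_X+tL)\cdot\omega=0$ and examining the quadratic $(K_X+tL)^2$), squeezes every inequality to equality and concludes $K_X+L$ is torsion. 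Your claim that ``the only divisor classes $D$ with $0\le D\cdot\omega\le K_X\cdot\omega$ and $D\cdot(D-K_X)=0$ are $0$ and $K_X$'' is not true on numerics alone --- the torsion ambiguity and the role of effectivity are exactly what the paper's squeezing argument is managing, and one cannot just cite Morgan's 7.4.1 as a black box: Morgan's statement is for the ordinary Seiberg--Witten invariant, and what needs transporting here is precisely the effective-divisor step, which in the $\Pin^-(2)$ setting has to be the $I$-invariant version of it.

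Second, the ``main obstacle'' you flag (whether the $I$-fixed locus for $\hat{\mathfrak{s}}_0$ and $\hat{\mathfrak{s}}_0\hat\otimes\hat K$ is a single reduced, correctly oriented point) is not where the work is. The paper does not compute the invariant of the canonical or anti-canonical structure via the K\"{a}hler correspondence; those two values $\pm1$ are already supplied by \thmref{thm:canonical} and \corref{cor:anticanonical}, whose proof is Taubes' perturbation argument on the symplectic side, not an analysis of $H^0$ and $H^1$ of $\mathscr{O}_X$. So the rigidity check you propose to carry out by studying the $\iota$-action on $H^0(X,K_X)$ and $H^1(X,\mathscr{O}_X)$ is unnecessary for this theorem. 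The only remaining content after \thmref{thm:canonical} is the vanishing statement, and that is settled once the numerical argument pins $E$ (up to torsion) to $0$ or $K_X$ and then uses the $I$-invariant nonzero section to upgrade ``torsion'' to ``$I$-equivariantly trivial''. Your plan would likely reach the same conclusion, but as written it both over-engineers the canonical/anti-canonical case and under-specifies the numerical squeeze for the vanishing case.
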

%
%
The hypersurfaces in $\CP^3$ with complex conjugation satisfy the conditions of \thmref{thm:general} (\subsecref{subsec:general}).
We also compute the invariants of elliptic surfaces in \subsecref{subsec:elliptic}.

\begin{Acknowledgements}
The author would like to thank the referee for his comments on the earlier version of the paper.
  The  author is supported in part by JSPS Grant-in-Aid for Scientific Research (C) 25400096 and 19K03506.
\end{Acknowledgements}

%
%
\section{$\Spincm$ structures induced from the real structure}\label{sec:2}
%
%
\subsection{Reduction of the frame bundle}
Recall the isomorphism $\U(2)\cong(\U(1)\times\SU(2))/\deux$.
Define the group $\hU(2)$ by
\[
\hU(2) =  (\Pin^-(2)\times\SU(2))/\deux.
\]
Then $\hU(2)/\Pin^-(2)=\SO(3)$, $\hU(2)/\SU(2)=\OO(2)$, the identity component of $\hU(2)$ is $\U(2)$, and $\hU(2)/\U(2)=\deux$.
We have an exact sequence
\begin{equation}\label{eq:hom}
1\to\deux\to\hU(2)\overset{\sigma}{\to} \OO(2)\times\SO(3)\to 1.
\end{equation}
Note that $\hU(2)$ is embedded in $\SO(4)$ as
\begin{equation}\label{eq:emb}
\hU(2) =  \frac{\Pin^-(2)\times\SU(2)}{\deux}\subset \frac{\SU(2)\times\SU(2)}{\deux}=\SO(4).
\end{equation}
Suppose we have a manifold $\hat{Y}$ with a double covering $Y\to \hat{Y}$ and a principal $\hU(2)$-bundle $P$ over $\hat{Y}$ such that $P/\U(2)\cong Y$.
Then we have an $\OO(2)$-bundle $P_O:=P/\SU(2)$ such that $P_O/\SO(2)\cong Y$ and an $\SO(3)$-bundle $P_S:=P/\Pin^-(2)$. 
Conversely, the following holds
\begin{Proposition}\label{prop:F}
For a double covering $Y\to \hat{Y}$, let $\ell_{\R}=Y\times_{\deux}\R$ and suppose an $\OO(2)$-bundle $P_O$ such that $P_O/\SO(2)\cong Y$ and an $\SO(3)$-bundle $P_S$ are given.
If $w_2(P_O)+w_1(\ell_{\R})^2 =w_2(P_S)$, then there exists a $\hU(2)$-bundle $P$ such that 
\[
P/\Pin^-(2)\cong P_S,\quad P/\SU(2)\cong P_O,\quad P/\U(2)\cong Y.
\] 
\end{Proposition}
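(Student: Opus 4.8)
The plan is to recast the construction of $P$ as a structure-group lifting problem along the central extension $\sigma$ of \eqref{eq:hom}, and then to identify the obstruction with a characteristic class that vanishes precisely under the stated hypothesis. First I would form the principal $\OO(2)\times\SO(3)$-bundle $Q=P_O\times_{\hat{Y}}P_S$ over $\hat{Y}$. A $\hU(2)$-bundle $P$ with $P/\SU(2)\cong P_O$ and $P/\Pin^-(2)\cong P_S$ is exactly a lift of the structure group of $Q$ along $\sigma\colon\hU(2)\to\OO(2)\times\SO(3)$, and since \eqref{eq:hom} exhibits $\sigma$ as a central extension with kernel $\deux$, the usual obstruction theory for such lifts (the same mechanism as for $\Spin$ structures) applies: the lift exists if and only if the pullback, under the classifying map of $Q$, of the extension class $\epsilon\in H^2(B(\OO(2)\times\SO(3));\Z_2)$ vanishes in $H^2(\hat{Y};\Z_2)$.

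It then remains to compute $\epsilon$. Writing $\hU(2)=(\Pin^-(2)\times\SU(2))/\deux$ with $\deux$ the diagonal central subgroup, and using that in $\deux$ one has $-1=(-1)^{-1}$ so that the diagonal and the antidiagonal coincide, one sees that $\sigma$ realizes the Baer sum over $\OO(2)\times\SO(3)$ of the pullbacks of the two $\deux$-extensions $\Pin^-(2)\to\OO(2)$ and $\SU(2)=\Spin(3)\to\SO(3)$; hence $\epsilon=\mathrm{pr}_1^{*}\epsilon_1+\mathrm{pr}_2^{*}\epsilon_2$, where $\epsilon_2=w_2\in H^2(B\SO(3);\Z_2)$ is the usual $\Spin$ obstruction and $\epsilon_1\in H^2(B\OO(2);\Z_2)$ is the $\Pin^-$ obstruction. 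The defining feature of $\Pin^-$ (as opposed to $\Pin^+$) is that $\epsilon_1=w_1^{2}+w_2$; this can be confirmed by restricting to $\OO(1)$, where it reduces to $\mathrm{Sq}^1 w_1=w_1^{2}$ together with the vanishing of $w_2$ on line bundles. Pulling back along the classifying maps of $P_O$ and $P_S$, the obstruction becomes $w_1(P_O)^{2}+w_2(P_O)+w_2(P_S)\in H^2(\hat{Y};\Z_2)$. Since $P_O/\SO(2)\cong Y$, the classes $w_1(P_O)$ and $w_1(\ell_{\R})$ both classify the double cover $Y\to\hat{Y}$, so they agree, and the obstruction equals $w_1(\ell_{\R})^{2}+w_2(P_O)+w_2(P_S)$, which is $0$ by hypothesis. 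Thus a $\hU(2)$-bundle $P$ with $P/\Pin^-(2)\cong P_S$ and $P/\SU(2)\cong P_O$ exists.

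Finally, $P/\U(2)\cong Y$ comes for free: the $\pi_0$-homomorphism $\hU(2)\to\deux$ factors as $\hU(2)\xrightarrow{\sigma}\OO(2)\times\SO(3)\xrightarrow{\det\circ\,\mathrm{pr}_1}\deux$, so that $P/\U(2)\cong(P/\SU(2))/\SO(2)\cong P_O/\SO(2)\cong Y$, as the preimage of $\SU(2)$ together with $\SO(2)\subset\OO(2)$ in $\hU(2)$ is exactly $\U(2)$.

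The main obstacle is the computation of $\epsilon$: one must carefully justify that the diagonal quotient $(\Pin^-(2)\times\SU(2))/\deux$ really represents the Baer sum of the two extensions — so that the obstruction classes add — and correctly pin down the $w_1^{2}$-term, which is precisely what separates the $\Pin^-$ construction from the $\Pin^+$ one. Everything else is formal obstruction theory. Note that uniqueness of $P$ is not asserted here; the set of lifts is an affine space over $H^1(\hat{Y};\Z_2)$, and it is the surjectivity of $\pi^{*}$ on $H^1(\,\cdot\,;\Z_2)$ in \thmref{thm:canonical} that will later single out the canonical choice.
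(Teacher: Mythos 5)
Your proof is correct, and it reaches the same obstruction class as the paper, but the route is genuinely different. The paper avoids invoking the $\Pin^-$ obstruction formula by embedding $\OO(2)\times\SO(3)$ into $\SO(6)$ via $(A,B)\mapsto A\oplus\det A\oplus B$ (using the standard $\OO(2)\hookrightarrow\SO(3)$), thereby realizing $\hU(2)$ as the preimage of this subgroup under $\Spin(6)\to\SO(6)$. It then reads off the lifting obstruction from the diagram of fibrations as $w_2(P_S\oplus P_O\oplus\det P_O)$, whose Whitney expansion gives $w_2(P_S)+w_2(P_O)+w_1(\ell_\R)^2$. You instead work directly with the extension $1\to\deux\to\hU(2)\to\OO(2)\times\SO(3)\to1$, recognize it as the Baer sum of the pullbacks of the $\Pin^-(2)\to\OO(2)$ and $\Spin(3)\to\SO(3)$ extensions, and import the known obstruction classes $w_1^2+w_2$ and $w_2$. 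The two methods are dual accounts of the same computation — the $\det P_O$ summand in the paper's $\SO(6)$-bundle is exactly what produces the $w_1^2$ term that you attribute to the $\Pin^-$ cocycle — and each buys something: the paper's embedding gives the answer without knowing the $\Pin^-$ formula a priori (it falls out of naturality of $w_2$), while your Baer-sum argument is self-contained once the $\Pin^-$ obstruction is accepted as known and avoids any auxiliary embedding. Your identification $w_1(P_O)=w_1(\ell_\R)$ from $P_O/\SO(2)\cong Y$, and your derivation of $P/\U(2)\cong Y$ from $\U(2)=\sigma^{-1}(\SO(2)\times\SO(3))$, are both correct and fill in details that the paper leaves implicit.
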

\begin{proof}
({\it Cf.}~\cite[Proposition 11]{N1}.)
Note that the image of $\Pin^-(2)\subset \SP(1)=\Spin(3)$ by the canonical homomorphism $\Spin(3)\to \SO(3)$ is a copy of $\OO(2)$ embedded in $\SO(3)$.
The embedding $\OO(2)\subset\SO(3)$ is given by $A\mapsto A\oplus \det A$.
Embed $\OO(2)\times \SO(3)$ into $\SO(6)$ by using this embedding.
Then we have a commutative diagram
\[
\begin{CD}
1@>>>\deux @>>>\hU(2)@>>>\OO(2)\times\SO(3)@>>>1\\
@.@|@VVV @VVV @. @.\\
1@>>>\{\pm 1\}@>>>\Spin(6)@>>>\SO(6)@>>>1.
\end{CD}
\]
The diagram leads to a commutative diagram of fibrations
$$
\begin{CD}
K(\Z_2,1)@>>>B\hU(2)@>>>B\OO(2)\times B\SO(3)@>>>K(\Z_2, 2)\\
@VVV @VVV @VVV @VVV\\
K(\Z_2,1)@>>>B\Spin(6)@>>>B\SO(6)@>{w_2}>>K(\Z_2, 2).
\end{CD}
$$
From these, we see that 
$$w_2(P_S\oplus P_O\oplus\det P_O)=w_2(P_S)+ w_2(P_O)+w_1(\ell_{\R})^2=0$$  
is the required condition.
\end{proof}
\begin{Remark}
The choice of $P$ is not unique. 
The possibility of $P$ is parametrized  by $H^1(\hat{Y};\Z_2)$.
\end{Remark}

Recall the embedding 
\[
\U(2)=(\U(1)\times\SU(2))/\deux\subset (\SU(2)\times\SU(2))/\deux=\SO(4)
\]
and a commutative diagram
\[
\begin{CD}
1@>>>\deux @>>>\U(2)@>{\sigma^\prime}>>\U(1)\times\SO(3)@>>>1\\
@.@|@VVV @VVV @. @.\\
1@>>>\{\pm 1\}@>>>\SO(4)@>>>\SO(3)\times\SO(3)@>>>1.
\end{CD}
\]

Let $(X,\omega,J)$ be a symplectic $4$-manifold with compatible almost complex structure $J$.
Fixing a Hermitian metric on $TX$, we obtain a $\U(2)$ reduction $P_F$ of the $\SO(4)$-frame bundle.
Then we have a $\U(1)$-bundle $P_K:=P_F/\SU(2)$ and an $\SO(3)$-bundle $P_S:=P_F/\U(1)$. 
Let $K=\Lambda^{2,0}(X)$ and $K^{-1}=\Lambda^{0,2}(X)$ be respectively the canonical and anti-canonical line bundles associated with the almost complex structure $J$. 
Note that $\Lambda^+(X)\otimes_{\R} \C\cong \C\omega\oplus K\oplus K^{-1}$. 
Then we can identify
\[
P_K\times_{\U(1)}\C\cong K\cong K^{-1}
\]
as {\it real} vector bundles.
We assume $P_K\times_{\U(1)}\C =  K$.
On the other hand, 
\[
\Lambda^-(X)\cong P_S\times_{\SO(3)}\R^3.
\]
%

Let $(X,\omega,\iota)$ be a closed real symplectic $4$-manifold without real part.
Then $X$ admits an almost complex structure $J$ compatible to $\omega$ such that $\iota_*\circ J =-J\circ\iota_*$.
Fixing such a $J$,  we have a $\U(2)$ reduction $P_F$ of the $\SO(4)$-frame bundle.
Let $P_K$ and $P_S$ be the induced $\U(1)$ and $\SO(3)$ bundles.
Let $\X$ be  the quotient manifold $\X=X/\iota$ and $\ell_{\R}=X\times_{\deux}\R$. 
The involution $\iota$ induces a bundle automorphism $\tilde{\iota}$ of $P_S$ such that $\tilde{\iota}^2=1$, and its quotient bundle $\hat{P}_S=P_S/\tilde{\iota}$ over $\X$ has the property that
\[
\hat{P}_S\times_{\SO(3)}\R^3 = \Lambda^-(\X).
\]
On the other hand, $\iota$ does not induce a bundle automorphism on $P_K$ since $\iota$ is not complex linear.
However $\iota$ induces an anti-linear involution on the canonical bundle $K=P_K\times_{\U(1)}\C$.
Then the quotient bundle $\hat{K}=K/\iota$ is a nonorientable $\R^2$ bundle over $\X$ such that $\det\hat{K}=\ell_{\R}$.
Let $\hat{P}_K$ be the $\OO(2)$-bundle over $\X$ of orthogonal frames on $\hat{K}$.
By \propref{prop:F}, we have a $\hU(2)$-bundle $\hat{P}$ which induces $\hat{P}_S$ and $\hat{P}_K$ if 
$w_2(\X)=w_2(\hat{K})+ w_1(\ell_{\R})^2$.
Note that the $\deux$-bundle $\hat{P}/\U(2)\to \X$ is isomorphic to $\pi\colon X\to \X$.
Fix an isomorphism between them.
Then $\hat{P}\to \hat{P}/\U(2)$ can be considered as a $\U(2)$-bundle over $X$.
This $\U(2)$-bundle $\hat{P}\to \hat{P}/\U(2)= X$ is denoted by $P^\prime$.
\begin{Proposition}\label{prop:P}
Suppose 
\begin{enumerate}  
\item $w_2(\X)+w_2(\hat{K})+ w_1(\ell_{\R})^2 =0$, 
\item $\pi^*\colon H^1(\X;\Z_2)\to H^1(X;\Z_2)$ is surjective.
\end{enumerate} 
Then we can take a $\hU(2)$-bundle $\hat{P}\to \X$  such that 
\begin{equation}\label{eq:P}
\hat{P}/\Pin^-(2)\cong \hat{P}_S, \quad \hat{P}/\SU(2)\cong \hat{P}_K,\quad \hat{P}/\U(2)\cong X, \quad P^\prime\cong P_F.
\end{equation}
Furthermore $\hat{T}=\hat{P}\times_{\hU(2)}\R^4$ is isomorphic to $T\X$, where $\hat{T}$ is defined via the embedding \eqref{eq:emb}.
\end{Proposition}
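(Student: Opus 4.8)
The first three isomorphisms in \eqref{eq:P} have in effect already been arranged in the discussion preceding the statement: by \propref{prop:F}, applied to the $\OO(2)$-bundle $\hat{P}_K$, the $\SO(3)$-bundle $\hat{P}_S$ and the double cover $\pi\colon X\to\X$ (whose $w_2$-hypothesis reduces, via $w_2(\hat{P}_S)=w_2(\X)$, to exactly (1)), one obtains a $\hU(2)$-bundle $\hat{P}_0$ with $\hat{P}_0/\Pin^-(2)\cong\hat{P}_S$, $\hat{P}_0/\SU(2)\cong\hat{P}_K$ and $\hat{P}_0/\U(2)\cong X$; fixing the last identification produces the associated $\U(2)$-bundle $P'_0$ over $X$. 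The plan is to first use hypothesis (2) to modify $\hat{P}_0$, within the family of such choices, so that in addition $P'\cong P_F$, and then to deduce $\hat{T}\cong T\X$ by descent along $\pi$.

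\emph{Step 1: arranging $P'\cong P_F$.} This is routine bookkeeping with $\Z_2$-cohomology. Under $\sigma^\prime\colon\U(2)\to\U(1)\times\SO(3)$ the bundle $P_F$ induces the pair $(P_K,P_S)$ by construction, while $P'_0$ induces the same pair: pulling $\hat{P}_K$ back to the double cover gives the $\U(1)$-frame bundle of $\pi^*\hat{K}=K$, that is $P_K$, and pulling $\hat{P}_S$ back gives $P_S$. Since $1\to\deux\to\U(2)\overset{\sigma^\prime}{\to}\U(1)\times\SO(3)\to1$ is a central extension, the $\U(2)$-bundles over $X$ inducing $(P_K,P_S)$ form a torsor over $H^1(X;\Z_2)$; let $\epsilon\in H^1(X;\Z_2)$ be the difference of $[P_F]$ and $[P'_0]$. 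Twisting $\hat{P}_0$ by a class $\delta\in H^1(\X;\Z_2)$ (the Remark after \propref{prop:F}) leaves $\hat{P}_S$, $\hat{P}_K$ and the quotient $X$ unchanged — the central $\deux$ of $\hU(2)$ already lies in $\U(2)$ — and, in terms of transition functions over a cover of $\X$ adapted to $\pi$, multiplies those of $P'_0$ by the pulled-back $\Z_2$-cocycle, hence replaces $[P'_0]$ by $[P'_0]+\pi^*\delta$. By hypothesis (2) we may choose $\delta$ with $\pi^*\delta=\epsilon$; then $\hat{P}:=\hat{P}_0\cdot\delta$ satisfies all four isomorphisms in \eqref{eq:P}. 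Note that $\delta$, hence $\hat{P}$, is pinned down only modulo $\ker(\pi^*)$.

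\emph{Step 2: the tangent bundle.} Restricting $\hat{T}=\hat{P}\times_{\hU(2)}\R^4$ to $X=\hat{P}/\U(2)$ gives $\pi^*\hat{T}=\hat{P}\times_{\U(2)}\R^4=P'\times_{\U(2)}\R^4$, where $\U(2)\subset\SO(4)$ is the restriction of the embedding \eqref{eq:emb}; since $P'\cong P_F$ and $P_F\times_{\U(2)}\R^4=TX$, this is $TX=\pi^*T\X$. To promote this to an isomorphism over $\X$ one descends along the free cover $\pi$: the deck transformation acts on $\pi^*\hat{T}$ tautologically and on $\pi^*T\X=TX$ via $d\iota$, and these two liftings of $\iota$ must be matched. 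The remaining freedom of choosing $\delta$ modulo $\ker(\pi^*)$, which does not disturb $P'\cong P_F$, is available for precisely this; once the liftings agree the isomorphism descends and $\hat{T}\cong T\X$.

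The step that requires genuine care is this last matching. \propref{prop:F} builds $\hat{P}$ only from $\hat{P}_S$ and $\hat{P}_K$, so the rank-$4$ bundle $\hat{P}\times_{\hU(2)}\R^4$ it produces is determined only up to tensoring by real line bundles over $\X$; the information singling out $T\X$ among these is carried by $P'\cong P_F$ together with its $\iota$-equivariant refinement, and the role of hypothesis (2) is to make both attainable. Concretely, the lift of $\iota$ to $P'$ coming from the $\hU(2)$-structure and the lift coming from $d\iota$ on $P_F$ need not coincide on the nose — they even induce automorphisms of $\U(2)$ that agree only up to conjugacy — so reconciling them, which consumes the $\ker(\pi^*)$-ambiguity, is where the real content lies; with the lifts matched, ordinary descent of vector bundles along the free double cover $\pi$ concludes the proof.
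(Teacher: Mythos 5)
Your Step 1 is essentially the paper's argument: both identify the set of compatible $\hU(2)$-bundles (resp.\ $\U(2)$-bundles inducing $(P_K,P_S)$) as a torsor over $H^1(\X;\Z_2)$ (resp.\ $H^1(X;\Z_2)$), and use hypothesis (2) to kill the difference class between $P'_0$ and $P_F$ by twisting $\hat{P}_0$ with a real line bundle over $\X$. So far so good.

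Step 2 diverges from the paper, and as written it has a genuine gap. You reduce $\hat{T}\cong T\X$ to descending a bundle isomorphism $\pi^*\hat{T}\cong\pi^*T\X$ along the free double cover, which requires matching the two $\iota$-lifts, and you assert without argument that the remaining freedom $\delta\in\ker(\pi^*\colon H^1(\X;\Z_2)\to H^1(X;\Z_2))$ ``is available for precisely this.'' You would have to show (a) that the failure of equivariance of the isomorphism is measured by a class in a group onto which the $\delta$-ambiguity surjects, and (b) that this obstruction lands in the image; neither is established. Worse, the mechanism you invoke cannot in fact be the source of the result: for $\lambda\in\ker(\pi^*)$ the twisted bundle $\hat{T}\otimes\lambda$ has $\pi^*(\hat{T}\otimes\lambda)\cong\pi^*\hat{T}$ and $w_2(\hat{T}\otimes\lambda)=w_2(\hat{T})$, so by the classification of $\SO(4)$-bundles over a closed $4$-manifold by $(w_2,p_1,e)$ the isomorphism class of $\hat{T}$ is unchanged under the $\ker(\pi^*)$-twist. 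Changing $\delta$ therefore cannot ``reconcile'' anything; either $\hat{T}\cong T\X$ holds for every admissible choice or for none.

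The paper closes this instead with a direct characteristic-class computation, establishing the former alternative: from $\pi^*\hat{T}\cong TX\cong\pi^*T\X$ and injectivity of $\pi^*$ on $H^4(\X;\Z)$ it gets $e(\hat{T})=e(T\X)$ and $p_1(\hat{T})=p_1(T\X)$; then, factoring $p\circ\sigma\colon\hU(2)\to\SO(3)$ through $\hU(2)\hookrightarrow\SO(4)\to\SO(3)$, it identifies $\Lambda^-\hat{T}$ with $\hat{P}_S\times_{\SO(3)}\R^3=\Lambda^-(\X)$ and concludes $w_2(\hat{T})=w_2(\hat{P}_S)=w_2(\X)$; the $\SO(4)$-classification over a $4$-manifold then gives $\hat{T}\cong T\X$. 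You should replace your descent argument with this (or with a genuine proof that the $\Z_2$-equivariant structures on $\pi^*\hat{T}$ and $TX$ agree); merely relocating the difficulty to ``matching the liftings'' and appealing to residual choice freedom does not resolve it.
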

\begin{proof}
By the proof of \propref{prop:F}, we see that the set of isomorphism classes of $\hU(2)$-bundle $\hat{P}$ which induces the same $\hat{P}_K$ and $\hat{P}_S$ is parametrized by $H^1(\X;\Z_2)$.
If a choice of $\hat{P}$ is given, then every other choice is obtained by tensoring a real line bundle.
Similarly, the set of isomorphism classes of $\U(2)$-bundle $P_F$ which induces the same $P_K$ and $P_S$ is parametrized by $H^1(X;\Z_2)$.
Now choose  $\hat{P}$ which induces $\hat{P}_K$ and $\hat{P}_S$.
Then it follows from the construction that the $\U(2)$-bundle $P^\prime$ induces $P_K$ and $P_S$.
Thus the difference between $P_F$ and $P^\prime$ is given by an element of $H^1(X;\Z_2)$.
Under the assumption, the difference can be annihilated by tensoring an appropriate real line bundle over $\X$ with $\hat{P}$.

Since $\pi^*\hat{P}\times_{\hU(2)}\R^4=P^\prime\times_{\U(2)}\R^4 = P_F\times_{\U(2)}\R^4=TX$, we have $\pi^*\hat{T}\cong TX\cong\pi^*T\X$.
From this, it follows that $e(\hat{T})=e(T\X)$ and $p_1(\hat{T})=p_1(T\X)$.
Consider the homomorphisms $\hU(2)\overset{\sigma}{\to}\OO(2)\times \SO(3)\overset{p}{\to}\SO(3)$ where $p$ is the projection to the second factor.
Then the composite map $p\circ\sigma\colon \hU(2)\to\SO(3)$ factors through $\hU(2)\hookrightarrow\SO(4)\to\SO(3)$.
Then we have a commutative diagram
\[
\xymatrix{
B\hU(2)\ar[r]\ar[rd] & B\SO(3)\\
&B\SO(4)\ar[u]
}
\]
From this, it follows that $w_2(\hat{T})=w_2(\hat{P}_S)=w_2(\X)$.
Therefore $\hat{T}\cong T\X$.
\end{proof}
\begin{Remark}
The choice of $\hat{P}$ is not unique. 
The possibility of $\hat{P}$ is parametrized  by $\ker(\pi^*\colon H^1(\X;\Z_2)\to H^1(X;\Z_2))$.
\end{Remark}

\subsection{Canonical $\Spincm$ structure}\label{subsec:canonical}

Recall that the canonical $\Spinc$ structure $\mathfrak{s}_0$ over $X$ with respect to the almost complex structure $J$ is defined from the $\U(2)$-reduction $P_F$, and it has the positive spinor bundle $W^+_0$ of the form $W^+_0 = \underline{\C}\oplus K^{-1}$. 
In this subsection, we define the canonical $\Spincm$ structure over $X\to\X$ induced from the real structure on $X$.

Recall that
\[
\Spincm(4)
=\frac{\SU(2)\times\SU(2)\times\Pin^-(2)}{\deux} = \frac{\SP(1)\times\SP(1)\times\Pin^-(2)}{\deux}.
\] 
A $\Spincm$ structure $\hat{\mathfrak{s}}$ on $X\to\X$ consists of a $\Spincm(4)$-bundle $Q$ over $\X$, an isomorphism of $\Z/2$-bundles $Q/\Spinc(4)\cong X$, and an isomorphism between the $\SO(4)$-frame bundle and $Q/\Pin^-(2)$. 
The $\OO(2)$-bundle $\hat{L}=Q/\Spin(4)$  is called the {\it characteristic bundle} of $\hat{\mathfrak{s}}$.  
The bundle  $\hat{L}$ has a $\ell$-coefficient orientation and its Euler class is denoted by $\tilde{c}_1(\hat{L})\in H^2(\X;\ell)$.  
We often make no distinction between $\hat{L}$ and its associated $\R^2$-bundle.
Let $\HH_{\pm}$ be the $\Spincm(4)$ modules which are copies of $\HH$ as vector spaces such that the action of $[q_+,q_-,u]\in\Spincm(4)=(\SP(1)\times\SP(1)\times\Pin^-(2))/\deux$  on $\phi\in\HH_{\pm}$ is given by $q_{\pm}\phi u^{-1}$. 
Then the associated bundles $W_{\pm}=Q\times_{\Spincm(4)}\HH_{\pm}$ are the spinor bundles of $\hat{\mathfrak{s}}$.

Note that the embedding $\hU(2)\hookrightarrow \SO(4)$ factors through another embedding  $\varepsilon\colon\hU(2)\to\Spincm(4)$ which is defined  by
\begin{align*}
\varepsilon\colon\hU(2) = \frac{\Pin^-(2)\times\SU(2)}{\deux} \to& \frac{\SU(2)\times\SU(2)\times\Pin^-(2)}{\deux} = \Spincm(4),\\
(u,q)\mapsto& (u,q,u). 
\end{align*}
When we have a $\hU(2)$-bundle  $\hat{P}$ as in \propref{prop:P}, a $\Spincm$ structure $\hat{\mathfrak{s}}$ over $X$  is defined via the embedding $\varepsilon$.
The $\Spincm(4)$-bundle $Q$ of $\hat{\mathfrak{s}}$ is given by
\[
Q=\hat{P}\times_{\hU(2)}\Spincm(4),
\]
Note that the characteristic $\OO(2)$-bundle of $\hat{\mathfrak{s}}$ is $\hat{P}_K$.
The positive spinor bundle $\hat{W}^+$ is defined by the adjoint action of $\Pin^-(2)$ on the space of quaternions $\HH = \C\oplus j\C$:
\[
W^+ = Q\times_{\Spincm(4)}\HH_+ = \hat{P}\times_{\Pin^-(2)}\HH.
\]
For $u\in\U(1)$ and $z\in\C$, the adjoint action is given by 
\begin{equation}\label{eq:ad}
\begin{aligned}
\ad_u(z) &=uzu^{-1} =z,\\ 
\ad_{ju}(z) &= juzu^{-1}j^{-1}=\bar{z},\\ 
\ad_u(jz)&= u^2jz =u^2\bar{z}j, \\
\ad_{ju}(jz)&=u^{-2}j\bar{z}=u^{-2}zj.
\end{aligned}
\end{equation}
This action preserves the components $\C$ and $j\C$.
It follows from \eqref{eq:ad} that $\hat{W}^+$  is decomposed into the direct sum of two $\R^2$ bundles as $\hat{W}^+=\hat{E}_1\oplus \hat{E}_2$ such that $\det \hat{E}_1 =\det \hat{E}_2=\ell_{\R}$.
Define the $\R^2$-bundle $\underline{\hat{\C}}$ by $\underline{\hat{\C}} = X\times_{\deux}\C$, where $\deux$ acts on $\C$ by complex conjugation. 
Note that $\underline{\hat{\C}} = \underline{\R}\oplus \ell_{\R}$.
Since $\pi^*\hat{W}^+ = W^+_0 = \underline{\C}\oplus K^{-1}$, we see that $\hat{W}^+$ has a form of
\[
\hat{W}^+ = (\underline{\hat{\C}}\oplus \hat{K}^{-1})\otimes \lambda^\prime,
\]
where $\hat{K}^{-1}=(K^{-1})/\iota$ (which is the characteristic bundle of the $\Spincm$ structure) and $\lambda^\prime$ is a real line bundle over $\X$ with $\pi^*\lambda^\prime$ trivial.
Note that tensoring $\lambda^\prime$ to $\hat{P}$ changes $\hat{W}^+$ into $\hat{W}^+_0 = \underline{\hat{\C}}\oplus  \hat{K}^{-1}$.
Now we define the canonical $\Spincm$ structure.
\begin{Definition}\label{def:canonical}
A $\Spincm$ structure $\mathfrak{s}_0$ on $X\to \X$ is {\it canonical}  if it is defined from a $\hU(2)$-bundle $\hat{P}$ satisfying \eqref{eq:P} and its  positive spinor bundle $\hat{W}^+_0$ has a form of 

\[
\hat{W}^+_0 = \underline{\hat{\C}}\oplus  \hat{K}^{-1}.
\]
\end{Definition}
The above discussion implies the following.
\begin{Corollary}\label{cor:canonical}
Suppose (1) and (2) in \thmref{thm:canonical}.
Then there exists a unique canonical $\Spincm$ structure on $X\to\X$.
\end{Corollary}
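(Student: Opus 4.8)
The plan is to deduce \corref{cor:canonical} from the structural analysis carried out just above its statement, reducing everything to two points: existence and uniqueness. For existence, I would start from the data intrinsic to the real symplectic structure: having fixed a compatible $J$ with $\iota_*J=-J\iota_*$, one obtains the $\U(2)$-frame bundle $P_F$ and hence $P_K$, $P_S$, and their $\iota$-quotients $\hat P_S$ and the nonorientable $\R^2$-bundle $\hat K$ with $\det\hat K=\ell_\R$. Assumption (1), $w_2(\X)+w_2(\hat K)+w_1(\ell_\R)^2=0$, is precisely the hypothesis of \propref{prop:F} with $P_O=\hat P_K$ (the orthogonal frame bundle of $\hat K$) and $P_S=\hat P_S$, so a $\hU(2)$-bundle $\hat P$ exists; assumption (2), surjectivity of $\pi^*$ on $H^1(\,\cdot\,;\Z_2)$, is exactly what \propref{prop:P} needs to arrange in addition that $P'\cong P_F$ and $\hat T\cong T\X$. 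Then apply the embedding $\varepsilon\colon\hU(2)\to\Spincm(4)$ from the excerpt to produce a $\Spincm$ structure $\hat{\mathfrak s}$ on $X\to\X$ with characteristic bundle $\hat P_K$ (i.e.\ $\hat K$ up to the choices) and positive spinor bundle $\hat W^+=(\underline{\hat\C}\oplus\hat K^{-1})\otimes\lambda'$ for some real line bundle $\lambda'$ over $\X$ with $\pi^*\lambda'$ trivial. Tensoring $\hat P$ by $\lambda'$ (which does not disturb the isomorphisms \eqref{eq:P}, since $\pi^*\lambda'$ is trivial) turns $\hat W^+$ into $\underline{\hat\C}\oplus\hat K^{-1}$, giving a $\Spincm$ structure of the form required in \defref{def:canonical}. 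That settles existence.

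For uniqueness I would argue as follows. Any two choices of $\hU(2)$-bundle satisfying \eqref{eq:P} differ by tensoring with a real line bundle $\mu$ over $\X$ lying in $\ker(\pi^*\colon H^1(\X;\Z_2)\to H^1(X;\Z_2))$, as recorded in the Remark after \propref{prop:P}. Such a twist replaces $\hat W^+_0$ by $\hat W^+_0\otimes\mu=(\underline{\hat\C}\oplus\hat K^{-1})\otimes\mu$. For this to again have the canonical form $\underline{\hat\C}\oplus\hat K^{-1}$ — in particular for the summand $\underline{\hat\C}\otimes\mu$ to be isomorphic to $\underline{\hat\C}=\underline\R\oplus\ell_\R$ — one needs $\mu$ or $\mu\otimes\ell_\R$ to be trivial; combined with $\mu\in\ker\pi^*$ and the fact that $\ell_\R$ pulls back nontrivially only if the cover is connected, a short $w_1$-bookkeeping argument forces $\mu$ to be trivial. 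Hence the canonical $\Spincm$ structure, when it exists, is unique. I would phrase this last comparison carefully using $w_1$: writing $w_1(\mu)=a\in\ker\pi^*$, the requirement is $a=0$ or $a=w_1(\ell_\R)$, and $a=w_1(\ell_\R)\notin\ker\pi^*$ when $X$ is connected (i.e.\ $\pi^*w_1(\ell_\R)=0$ would contradict $X\to\X$ being the double cover classified by $w_1(\ell_\R)$), so $a=0$.

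The main obstacle is the uniqueness half, and specifically making the "which twists preserve the canonical form" step airtight. One has to be sure that the splitting $\hat W^+=\hat E_1\oplus\hat E_2$ coming from the $\Pin^-(2)$-adjoint action \eqref{eq:ad} is canonical enough that an abstract isomorphism $\hat W^+_0\otimes\mu\cong\underline{\hat\C}\oplus\hat K^{-1}$ of $\Spincm$ spinor bundles actually matches the distinguished trivial-ish summand $\underline{\hat\C}$ on each side — i.e.\ that one cannot "rotate" $\underline{\hat\C}\otimes\mu$ onto $\hat K^{-1}$ by an isomorphism of the full $\Spincm$ structure. This is where \defref{def:canonical} must be read as pinning down $\hat W^+_0$ including its summand structure, not merely up to unstructured bundle isomorphism; granting that, comparing first Stiefel--Whitney classes of the corresponding summands gives $w_1(\mu)=0$ directly. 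Everything else — existence via \propref{prop:F} and \propref{prop:P}, and the form of $\hat W^+$ via \eqref{eq:ad} and the displayed computation preceding \defref{def:canonical} — is already in place in the excerpt, so the proof is essentially an assembly of those facts.

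\begin{proof}
Assume (1) and (2). Fix a compatible almost complex structure $J$ with $\iota_*\circ J=-J\circ\iota_*$ and the associated bundles $P_F$, $P_K$, $P_S$, together with their $\iota$-quotients $\hat P_S=P_S/\tilde\iota$, $\hat K=K/\iota$, and the $\OO(2)$-bundle $\hat P_K$ of orthogonal frames on $\hat K$; recall $\det\hat K=\ell_\R$. Condition (1) is the hypothesis of \propref{prop:F} with $P_O=\hat P_K$, $P_S=\hat P_S$, so a $\hU(2)$-bundle exists; condition (2) then lets \propref{prop:P} supply a $\hU(2)$-bundle $\hat P$ with \eqref{eq:P}. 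Forming $Q=\hat P\times_{\hU(2)}\Spincm(4)$ via $\varepsilon$ gives a $\Spincm$ structure on $X\to\X$ with positive spinor bundle $\hat W^+=(\underline{\hat\C}\oplus\hat K^{-1})\otimes\lambda'$, where $\pi^*\lambda'$ is trivial, by the computation preceding \defref{def:canonical}. Tensoring $\hat P$ by $\lambda'$ preserves \eqref{eq:P} (as $\pi^*\lambda'$ is trivial) and replaces $\hat W^+$ by $\underline{\hat\C}\oplus\hat K^{-1}$; thus a canonical $\Spincm$ structure $\hat{\mathfrak s}_0$ exists.

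For uniqueness, by the Remark following \propref{prop:P} two bundles $\hat P$, $\hat P'$ satisfying \eqref{eq:P} differ by tensoring with a real line bundle $\mu$ on $\X$ with $w_1(\mu)\in\ker(\pi^*\colon H^1(\X;\Z_2)\to H^1(X;\Z_2))$. This replaces $\hat W^+_0=\underline{\hat\C}\oplus\hat K^{-1}$ by $(\underline{\hat\C}\otimes\mu)\oplus(\hat K^{-1}\otimes\mu)$. The decomposition $\hat W^+=\hat E_1\oplus\hat E_2$ induced by the $\Pin^-(2)$-action \eqref{eq:ad} is intrinsic, with $\hat E_1=\underline{\hat\C}\otimes\mu$ the summand on which $\U(1)\subset\Pin^-(2)$ acts trivially; for the new structure to be canonical in the sense of \defref{def:canonical} we need $\underline{\hat\C}\otimes\mu\cong\underline{\hat\C}$, i.e.\ $w_1(\underline{\hat\C}\otimes\mu)=w_1(\underline{\hat\C})$ as classes in $H^2$ after squaring off the splitting $\underline{\hat\C}=\underline\R\oplus\ell_\R$, which forces $w_1(\mu)=0$ or $w_1(\mu)=w_1(\ell_\R)$. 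Since $X\to\X$ is the double cover classified by $w_1(\ell_\R)$, we have $\pi^*w_1(\ell_\R)\neq 0$ when $X$ is connected, so $w_1(\ell_\R)\notin\ker\pi^*$ and hence $w_1(\mu)=0$; that is, $\mu$ is trivial and $\hat{\mathfrak s}_0$ is unique.
\end{proof}
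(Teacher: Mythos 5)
Your existence argument closely tracks the paper's own: invoke \propref{prop:F} and \propref{prop:P} to produce $\hat P$, pass through $\varepsilon$ to get $Q$, read off $\hat W^+$, and normalize by a real line bundle — this is exactly the discussion that \corref{cor:canonical} points to ("the discussion above immediately implies"). You also rightly identify uniqueness as the delicate half, since the paper supplies no explicit argument for it.

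However, the specific mechanism you use for uniqueness has a gap. You assert that replacing $\hat P$ by $\hat P\otimes\mu$ with $w_1(\mu)\in\ker\pi^*$ turns $\hat W^+_0=\underline{\hat\C}\oplus\hat K^{-1}$ into $(\underline{\hat\C}\otimes\mu)\oplus(\hat K^{-1}\otimes\mu)$, and then compare $w_1$ of the first summand. But "tensoring $\hat P$ by a real line bundle" here is a twist by the kernel $\deux$ of $\sigma\colon\hU(2)\to\OO(2)\times\SO(3)$ from \eqref{eq:hom}, and that $\deux$ is the center of $\hU(2)$. Since $\hat W^+=\hat P\times_{\Pin^-(2)}\HH$ with $\hU(2)$ acting through $\ad$ of the $\Pin^-(2)$-factor, and $\ad_{-1}=\id$ on $\HH$ by \eqref{eq:ad}, the central $\deux$ acts trivially on $\HH$. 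Hence the twist leaves $\hat W^+$ (and both summands $\hat E_1$, $\hat E_2$) literally unchanged; in particular your bookkeeping "$\hat E_1=\underline{\hat\C}\otimes\mu$" is not what happens. The conclusion that $w_1(\mu)=0$ or $w_1(\mu)=w_1(\ell_\R)$ therefore does not follow from comparing spinor bundle summands, and the condition in \defref{def:canonical} alone does not single out a unique $\hat P$ in $\ker\pi^*$ by this route. To close the gap one needs either to show directly that the (finitely many, parametrized by $\ker\pi^*=\{0,w_1(\ell_\R)\}$) choices of $\hat P$ yield isomorphic $\Spincm$-structures and hence a well-defined canonical one, or to pin down $\hat P$ using data that actually changes under the central twist (for instance the identification $Q/\Pin^-(2)\cong F_{\SO(4)}(\X)$ or the marked isomorphism $P'\cong P_F$), rather than the isomorphism type of $\hat W^+$.
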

Recall $\R^2$-bundles $\hat{E}$ such that $\det \hat{E}=\ell_{\R}$ with $\ell_{\R}$-coefficient orientation are classified by $\tilde{c}_1(\hat{E})\in H^2(\X;\ell)$.
We call an $\R^2$-bundle $\hat{E}$ such that $\det \hat{E}=\ell_{\R}$ an {\it $\R^2$-bundle twisted along $\ell_{\R}$.}
For $\R^2$-bundles $\hat{E}_1$ and $\hat{E}_2$ twisted along $\ell_{\R}$, there exists another $\R^2$-bundle $\hat{E}$ twisted along $\ell_{\R}$ such that $\tilde{c}_1(\hat{E})=\tilde{c}_1(\hat{E}_1)+\tilde{c}_1(\hat{E}_2)$, which can be considered as a ``twisted tensor product" of $\hat{E}_1$ and $\hat{E}_2$.
We write $\hat{E}=\hat{E}_1\hat{\otimes} \hat{E}_2$.

If $X\to \X$ admits a $\Spincm$ structure, then the set of equivalence classes of $\Spincm$ structures is also parametrized by $H^2(X;\ell)$.
Once a $\Spincm$ structure is given, the other $\Spincm$ structures are given by ``tensoring" an $\R^2$-bundle $\hat{E}$ twisted along $\ell_{\R}$.
In fact, when we have a canonical $\Spincm$ structure $\hat{\mathfrak{s}}_0$, there is a $\Spincm$ structure whose positive spinor bundle is 
\[
\hat{W}=\hat{E}\oplus (\hat{E}\hat{\otimes} \hat{K}^{-1}).
\]  
This $\Spincm$ structure is denoted by $\hat{\mathfrak{s}}_0\hat{\otimes}\hat{E}$.
\begin{Definition}\label{def:anti-canonical}
The $\Spincm$ structure $\hat{\mathfrak{s}}_0\hat{\otimes}\hat{K}$ is called the {\it anti-canonical $\Spincm$ structure}.
This has the spinor bundle of the form
\[
\hat{W}=\hat{K}\oplus \underline{\hat{\C}}.
\]
\end{Definition}
\begin{Remark}
For a $\Spincm$ structure $\hat{\mathfrak{s}}$ over $\pi\colon X\to\X$, let $\pi^*\hat{\mathfrak{s}}$ be the $\Spinc$ structure  over $X$ which is the pull-back of  $\hat{\mathfrak{s}}$.
Then  $\pi^*\hat{\mathfrak{s}}$ has two $\Spinc$ reductions, and one of them is the {\it canonical reduction} \cite[\S2.4]{N2}.
Then it can be seen that the canonical reduction of the pull-back $\pi^*\hat{\mathfrak{s}}_0$ of the canonical $\Spincm$ structure $\hat{\mathfrak{s}}_0$ is the canonical $\Spinc$ structure $\mathfrak{s}_0$ on $X$, and the canonical reduction of $\pi^*(\hat{\mathfrak{s}}_0\hat\otimes \hat{K})$ is the anti-canonical $\Spinc$ structure $\mathfrak{s}_0\otimes K$.
\end{Remark}
\subsection{A symmetry in the $\Pin^-(2)$-monopole theory}\label{subsec:symm}
It is well-known that there is a symmetry of complex conjugation in the Seiberg--Witten theory \cite[\S6.8]{Morgan}.
We explain a similar symmetry in the $\Pin^-(2)$-monopole theory.
The conjugation of a quaternion $z\in\HH$ is given by
\[
z=a+ib+jc+kd\mapsto \bar{z}=a-ib-jc-kd.
\]
Define the conjugation  $\alpha\colon\Spincm(4)\to\Spincm(4)$ by 
\[
\alpha([q,z])=[q,\bar{z}] \text{ for } [q,z]\in\Spincm(4)=\Spin(4)\times_{\deux}\Pin^-(2).
\]
For a $\Spincm(4)$-bundle $P$, let $P^c$ be the $\Spincm(4)$-bundle such that the total space is same with $P$, but the action of $\Spincm(4)$ is given by $p\cdot \alpha(q)$ for $p\in P=P^c$ and $q\in \Spincm(4)$.

For a $\Spincm$ structure $\hat{\mathfrak{s}}$ with $\Spincm(4)$-bundle $P$, we have a  $\Spincm$ structure $\hat{\mathfrak{s}}^c$ whose $\Spincm(4)$-bundle is $P^c$.
We call $\hat{\mathfrak{s}}^c$ the {\it conjugate of $\hat{\mathfrak{s}}$}.

Recall that $\ell_{\R}$-oriented $\R^2$-bundles $\hat{E}$ twisted along $\ell_{\R}$ are classified by $\tilde{c}_1(\hat{E})$.
For such an $\hat{E}$, let $\hat{E}^c$ be an $\R^2$-bundle such that $\tilde{c}_1(\hat{E}^c)=-\tilde{c}_1(\hat{E})$.
We collect several facts on conjugate which can be easily seen.
\begin{Proposition}\label{prop:symm}
For a $\Spincm$ structure  $\hat{\mathfrak{s}}$ and its conjugate $\hat{\mathfrak{s}}^c$, we have the following:
\begin{enumerate} 
\item If $\hat{L}$ is the characteristic bundle for $\hat{\mathfrak{s}}$, then $\hat{L}^c$ can be identified with the characteristic bundle of $\hat{\mathfrak{s}}^c$. 
In particular,  $\tilde{c}_1(\hat{L}^c)=-\tilde{c}_1(\hat{L})$.

\item For  an $\R^2$-bundle $E$ twisted along $\ell_{\R}$,
\[
(\hat{\mathfrak{s}}\hat{\otimes}\hat{E})^c = \hat{\mathfrak{s}}^c\hat{\otimes}\hat{E}^c.
\]
\item The conjugate of the canonical $\Spincm$ structure  is the anti-canonical $\Spincm$ structure, i.e, $\hat{\mathfrak{s}}_0^c = \hat{\mathfrak{s}}_0\hat{\otimes}\hat{K}$.
\item If $\mathfrak{s}$ is the canonical reduction of $\pi^*\hat{\mathfrak{s}}$, then the canonical reduction of $\pi^*\hat{\mathfrak{s}}^c$ is the complex conjugate $\bar{\mathfrak{s}}$ of  $\mathfrak{s}$.
\end{enumerate}
\end{Proposition}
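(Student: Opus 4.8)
The plan is to prove all four statements by unwinding the definition $\hat{\mathfrak{s}}\mapsto\hat{\mathfrak{s}}^c$ --- that is, replacing the $\Spincm(4)$-bundle $Q$ by $Q^c$ --- and tracing it through the associated-bundle descriptions of \subsecref{subsec:canonical}, using the explicit adjoint-action formulas \eqref{eq:ad}. The structural fact I would lean on is that, on a fixed $X\to\X$ refining a fixed $\SO(4)$-frame bundle, a $\Spincm$ structure is determined up to isomorphism by the isomorphism class of its positive spinor bundle $\hat{W}^+$ --- equivalently by the $\ell_{\R}$-twisted $\R^2$-bundle $\hat{E}$ by which it differs from $\hat{\mathfrak{s}}_0$ (recoverable from $\hat{W}^+$ as the summand with $w_1\neq 0$) --- as recorded before \defref{def:anti-canonical}. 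For (1): the characteristic bundle $\hat{L}=Q/\Spin(4)$ carries its $\ell$-orientation from the $\OO(2)=\Spincm(4)/\Spin(4)$ structure; since $\alpha$ is the identity on the $\Spin(4)$-factor, $Q^c/\Spin(4)$ is the same underlying $\R^2$-bundle with the $\OO(2)$-action precomposed with the involution of $\OO(2)$ induced by $z\mapsto\bar z$ on $\Pin^-(2)$. On the $\SO(2)$-part this involution is complex conjugation, reversing the fibrewise orientation, so $\tilde{c}_1(\hat{L}^c)=-\tilde{c}_1(\hat{L})$.

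For (2) I would observe that ``tensoring by $\hat{E}$'' replaces $\tilde{c}_1(\hat{L})$ by $\tilde{c}_1(\hat{L})+2\tilde{c}_1(\hat{E})$ and the spinor bundle by the corresponding twist, and that this construction is compatible with replacing $Q$ by $Q^c$; combined with (1) and the defining property $\tilde{c}_1(\hat{E}^c)=-\tilde{c}_1(\hat{E})$, both sides of the claimed equality have the same characteristic class and the same spinor bundle, hence coincide. For (3), recall that $\hat{\mathfrak{s}}_0$ has $\hat{W}^+_0=\underline{\hat{\C}}\oplus\hat{K}^{-1}$, obtained from $\hat{P}$ by the adjoint action of $\Pin^-(2)$ on $\HH=\C\oplus j\C$, the summand $\C$ yielding $\underline{\hat{\C}}$ and $j\C$ yielding $\hat{K}^{-1}$ via \eqref{eq:ad}. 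Replacing this bundle by its conjugate precomposes the action with $\alpha$, and inspecting \eqref{eq:ad} shows that this interchanges the two $\Pin^-(2)$-weights, so that the $\C$-summand now carries $\hat{K}$ and the $j\C$-summand $\underline{\hat{\C}}$ (the spurious signs on the $j\U(1)$-component only change the representation by an isomorphism, e.g.\ $z\mapsto iz$). Hence $\hat{W}^+(\hat{\mathfrak{s}}_0^c)\cong\hat{K}\oplus\underline{\hat{\C}}$, which is the spinor bundle of the anti-canonical structure (\defref{def:anti-canonical}); therefore $\hat{\mathfrak{s}}_0^c=\hat{\mathfrak{s}}_0\hat{\otimes}\hat{K}$.

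For (4) I would use that conjugation commutes with pull-back, so $\pi^*(\hat{\mathfrak{s}}^c)=(\pi^*\hat{\mathfrak{s}})^c$, and that $\alpha$ restricts on $\Spinc(4)\subset\Spincm(4)$ to the standard complex-conjugation automorphism; thus $(\pi^*\hat{\mathfrak{s}})^c$ is the complex conjugate $\overline{\pi^*\hat{\mathfrak{s}}}$, and it remains to verify that forming the canonical $\Spinc$ reduction in the sense of \cite[\S2.4]{N2} is compatible with complex conjugation, which follows by inspecting the property characterizing that reduction. I expect the real bookkeeping to be in (3) --- tracking the $\Pin^-(2)$-weights and the extra signs on the reflection component carefully enough to identify the summands of $\hat{W}^+(\hat{\mathfrak{s}}_0^c)$ on the nose --- and, to a lesser extent, in making precise which property characterizes the canonical reduction in (4); the remaining points are direct analogues of the Seiberg--Witten conjugation symmetry (cf.\ \cite[\S6.8]{Morgan}).
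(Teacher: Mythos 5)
The paper itself does not prove this proposition --- it merely states ``We collect several facts on conjugate which can be easily seen'' --- so the comparison here is between your argument and what the paper implicitly expects the reader to do, which is exactly the definition-unwinding you carry out. Your overall approach is correct and your conclusions in all four parts are right. One point of wording in (3) is off, though it does not affect the answer: conjugation does not \emph{interchange} the two $\Pin^-(2)$-weights on $\underline{\hat{\C}}\oplus\hat{K}^{-1}$. Before conjugation, $u\in\U(1)\subset\Pin^-(2)$ acts on $\HH_+$ by $\phi\mapsto u\phi u^{-1}$, giving weights $(0,-2)$ on $\C\oplus j\C$; precomposing with $\alpha$ (which on $\U(1)$ is $u\mapsto\bar u=u^{-1}$) changes the action to $\phi\mapsto u\phi u$, which yields weights $(+2,0)$. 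So both weights shift by $+2$ rather than swapping; had they genuinely swapped one would land back on $\underline{\hat{\C}}\oplus\hat{K}^{-1}$, i.e. the canonical structure, not the anti-canonical one. The result $\hat W^+(\hat{\mathfrak{s}}_0^c)\cong\hat{K}\oplus\underline{\hat{\C}}$ is the same either way, and your observation that the extra sign on the $j\U(1)$-component is absorbed by the intertwiner $z\mapsto iz$ is correct. In (2) and (4) you rightly lean on the paper's parametrization of $\Spincm$ structures and on the fact that $\alpha$ restricts to ordinary complex conjugation on $\Spinc(4)$; a slightly cleaner route for (2) is to trace $\alpha$ directly through the bundle-level tensoring, which avoids the (true but unstated) need to recover $\hat E$ from the ordered splitting of $\hat W^+$ when $H^2(\X;\ell)$ has $2$-torsion, but the argument as written is acceptable at the level of rigor the paper itself adopts.
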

Let $\hat{\mathfrak{s}}$ be a $\Spincm$ structure on $\pi\colon X\to \X$ and $\mathfrak{s}$  the canonical reduction of $\pi^*\hat{\mathfrak{s}}$. 
By \cite[\S4.5]{N1} (see also \cite[\S2.5]{N2}), there is an involution $I$ on the Seiberg--Witten theory on $(X,\mathfrak{s})$, and a bijective correspondence between the $\Pin^-(2)$-monopole solutions on $(\hat{X},\hat{\mathfrak{s}})$ and the $I$-invariant Seiberg--Witten solutions on $(X,\mathfrak{s})$.
Let us recall the relation between the downstairs $(\hat{X},\hat{\mathfrak{s}})$ and upstairs  $(X,\mathfrak{s})$ more precisely.
Note that $\iota^*\mathfrak{s}$ is isomorphic to the complex conjugation  $\overline{\mathfrak{s}}$ of $\mathfrak{s}$.
For a configuration $(A,\phi)$ on $(X,\mathfrak{s})$, $I(A,\phi)$ is defined by
\[
I(A,\phi) = (\overline{\iota^*A},\overline{\iota^*\phi}),
\]
where $\overline{\cdot}$ means complex conjugation.
For a configuration $(\hat{A},\hat{\phi})$ on $(\hat{X},\hat{\mathfrak{s}})$, we have a unique configuration $(A,\phi)$ on $(X,\mathfrak{s})$ such that $\phi=\pi^*\hat\phi$ and $A$ is the canonical $\U(1)$ reduction of the $\OO(2)$-connection $\pi^*A$ which is the pull-back of $A$.
We call $(A,\phi)$ {\it the lift} of  $(\hat{A},\hat{\phi})$.
Note that the lift $(A,\phi)$ is $I$-invariant.

The gauge transformation group of the $\Pin^-(2)$-monopole theory is given by
\[
\hat{\G} = \Gamma (X\times_{\deux}\U(1)),
\]
where $\deux$ acts on $\U(1)$ by $u\mapsto u^{-1}$.
Then $\hat{\G}$ can be identified with the $I$-invariant gauge transformation group on the upstairs $X$.
That is, the $I$-action on $\G=C^\infty(X,\U(1))$ is given by $f\mapsto \overline{\iota^*f}$, and we have a natural identification $\hat{\G}=\G^I$.

The $\Pin^-(2)$-monopole moduli space is 
\[
\hat{\M}(\X,\hat{\mathfrak{s}})=\{\text{ $\Pin^-(2)$-monopole solutions on $\hat{\mathfrak{s}}$ } \}/\hat{\G}.
\]
 and this is identified with the $I$-invariant moduli space,
\[
\M(X,\mathfrak{s})^I=\{\text{ Seiberg--Witten solutions on $\mathfrak{s}$ } \}^I/\G^I.
\]
By \propref{prop:symm}, we have the identifications,
\[
\hat{\M}(\X,\hat{\mathfrak{s}})\cong \M(X,\mathfrak{s})^I \cong \M(X,\overline{\mathfrak{s}})^I \cong \hat{\M}(\X,\hat{\mathfrak{s}}^c).
\]
The second identification is the isomorphism of complex conjugation in the ordinary Seiberg--Witten theory.
\begin{Corollary}\label{cor:conj}
$\SW^{\Pin}(\X,\hat{\mathfrak{s}}^c)=\pm \SW^{\Pin}(\X,\hat{\mathfrak{s}})$.
\end{Corollary}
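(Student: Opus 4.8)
Recall that $\SW^{\Pin}(\X,\hat{\mathfrak{s}})$ is computed by fixing a Riemannian metric on $\X$ together with a generic self-dual $\ell_{\R}$-valued perturbation, so that the moduli space $\hat{\M}(\X,\hat{\mathfrak{s}})$ becomes a smooth compact manifold of dimension $d(\hat{\mathfrak{s}})$; in the $\Z_2$-valued case the invariant is the evaluation of the suitable power of the $\Z_2$-reduction of the first $\mu$-class on $[\hat{\M}(\X,\hat{\mathfrak{s}})]_{\Z_2}$ (the mod $2$ count of points when $d(\hat{\mathfrak{s}})=0$), and in the $\Z$-valued case (when the moduli space is orientable, and in the situation of interest $0$-dimensional) it is the corresponding signed count. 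As both are independent of the metric and of the generic perturbation, it suffices to produce, for one admissible choice for $\hat{\mathfrak{s}}$, a matching choice for $\hat{\mathfrak{s}}^c$ together with a diffeomorphism of the two moduli spaces that is compatible, up to an overall sign, with the $\mu$-classes.

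The map is exactly the chain of identifications
\[
\hat{\M}(\X,\hat{\mathfrak{s}})\cong \M(X,\mathfrak{s})^I \cong \M(X,\overline{\mathfrak{s}})^I \cong \hat{\M}(\X,\hat{\mathfrak{s}}^c)
\]
displayed just before the statement. The conjugation $\alpha$ on $\Spincm(4)$ identifies the $\Spincm(4)$-bundle of $\hat{\mathfrak{s}}$ with that of $\hat{\mathfrak{s}}^c$ as a space, and under this identification a configuration $(\hat{A},\hat{\Phi})$ for $\hat{\mathfrak{s}}$ passes to a configuration for $\hat{\mathfrak{s}}^c$ by conjugating the spinor and passing to the conjugate connection; this assignment is equivariant for $\hat{\G}=\Gamma(X\times_{\deux}\U(1))$ (acting through $u\mapsto\overline{u}$), carries solutions of the perturbed $\Pin^-(2)$-monopole equations for $\hat{\mathfrak{s}}$ to solutions for $\hat{\mathfrak{s}}^c$ with the perturbation negated, and so descends to a homeomorphism between $\hat{\M}(\X,\hat{\mathfrak{s}})$ and $\hat{\M}(\X,\hat{\mathfrak{s}}^c)$. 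Via the correspondence of \cite{N1,N2} with $I$-invariant Seiberg-Witten solutions, together with \propref{prop:symm}(4), this is precisely the outer two isomorphisms above, and the middle one is the complex-conjugation symmetry $(A,\phi)\mapsto(\overline{A},\overline{\phi})$ of the ordinary Seiberg-Witten equations on $(X,\mathfrak{s})$ restricted to the $I$-invariant part. Since that symmetry commutes with $I$ (both $I$ then conjugation and conjugation then $I$ send $(A,\phi)$ to $(\iota^*A,\iota^*\phi)$), a generic $I$-invariant perturbation on $(X,\mathfrak{s})$ is carried to a generic $I$-invariant perturbation on $(X,\overline{\mathfrak{s}})$; after this matching of perturbation data the chain is a diffeomorphism of smooth compact moduli spaces.

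It remains to compare the cohomological data. The natural line bundle over $\hat{\M}$ entering the definition of the $\mu$-class is sent by the conjugation map to its inverse, so the first $\mu$-class goes to its negative and any $\mu$-class evaluation is preserved up to a sign depending only on the degree; for the $\Z_2$-valued invariant the sign is irrelevant and one gets $\SW^{\Pin}(\X,\hat{\mathfrak{s}}^c)=\SW^{\Pin}(\X,\hat{\mathfrak{s}})$ in $\Z_2$. For the $\Z$-valued invariant one must in addition track the preferred orientation of $\hat{\M}$, which is fixed by trivializing the determinant line of the linearized monopole operator built from $H^1(\X;\ell_{\R})$, $H^+(\X;\ell_{\R})$ and the $\ell$-twisted Dirac index; the conjugation map reverses this orientation by a global sign determined only by $b_+^{\ell}$ and the numerical invariants of $\hat{\mathfrak{s}}$, exactly as in the untwisted case, cf.~\cite[\S6.8]{Morgan}. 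Composing with the sign coming from the $\mu$-classes yields a single overall sign, whence $\SW^{\Pin}(\X,\hat{\mathfrak{s}}^c)=\pm\SW^{\Pin}(\X,\hat{\mathfrak{s}})$ in $\Z$.

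The main obstacle is precisely the orientation bookkeeping in this last step: one has to verify that the complex-conjugation symmetry, restricted to the $I$-invariant ($\Pin^-(2)$-monopole) part, changes the preferred orientation by one global sign rather than by something varying between components. This reduces to the standard computation of the action of conjugation on the relevant determinant lines, now carried out $I$-equivariantly; since the corollary only asserts equality up to sign, no finer information about that sign is required.
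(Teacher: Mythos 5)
Your proposal follows exactly the paper's route: the corollary is derived from the chain of identifications $\hat{\M}(\X,\hat{\mathfrak{s}})\cong\M(X,\mathfrak{s})^I\cong\M(X,\overline{\mathfrak{s}})^I\cong\hat{\M}(\X,\hat{\mathfrak{s}}^c)$ displayed immediately before the statement, with the middle map being the complex-conjugation symmetry of ordinary Seiberg-Witten theory restricted to the $I$-invariant locus. You have merely spelled out the perturbation-matching, $\mu$-class, and orientation bookkeeping that the paper leaves implicit (and that account for the ambiguous overall sign), so the argument is correct and essentially the paper's own.
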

%
%
\section{Real symplectic $4$-manifolds}\label{sec:proofs}
%
%

In this section, we prove \thmref{thm:canonical} and \thmref{thm:more}.
Suppose a closed real symplectic $4$-manifold $(X,\omega,\iota)$ satisfies the assumption of \thmref{thm:canonical}.

First we consider the $\Pin^-(2)$-monopole equations on the canonical $\Spincm$ structure.
Let $\mathfrak{s}_0$ be the canonical $\Spinc$ structure on $(X,\omega)$ and $\hat{\mathfrak{s}}_0$ the  canonical $\Spincm$ structure on $X\to\X$.
Recall $\hat{\omega}$ is a $\ell_{\R}$-valued self-dual $2$-form such that $\omega=\pi^*\hat{\omega}$.
Normalize the metric on $X$ so that $|\hat{\omega}|=\sqrt2$ and pull it back to $\X$ so that $|\omega|=\sqrt2$.
Recall the splitting 
\[
\Lambda^+(X)\otimes_{\R}\C=\C\cdot \omega\oplus K\oplus K^{-1}.
\]
The Clifford multiplication by $\omega$ induces the splitting $W^+_0=\underline{\C}\oplus K^{-1}$.
In fact, the components $\underline{\C}$ and $K^{-1}$ are respectively $(+2)$ and $(-2)$-eigenspaces of the action of $(\omega/i)$ on $W^+_0$.
 
On the $\Spincm$ structure $\hat{\mathfrak{s}}_0$, we have a twisted Clifford multiplication $\rho\colon \Lambda^1(\X)\otimes i\ell_{\R}\to \Hom(\hat{W}^+_0,\hat{W}^-_0)$ \cite{N1}, and this extends to 
\[
\rho\colon: \Lambda^+(\X)\otimes i\ell_{\R} \to \End(\hat{W}^+_0).
\]  
Then $(\hat{\omega}/i)$ induces the splitting $\hat{W}^+_0=\underline{\hat{\C}}\oplus  \hat{K}^{-1}$.
Since the real part of $\underline{\hat{\C}}$ is trivial, there is a constant section $\hat{u}_0$ such that $|\hat{u}_0|=1$. 
Mimicking the argument of Taubes~\cite{Taubes1}, we obtain the following.
\begin{Proposition}
There is a unique  $\OO(2)$-connection $\hat{A}_0$ (up to gauge) on $\hat{P}_K$ whose induced covariant derivative $\nabla_{\hat{A}_0}$  on $\hat{W}^+$ has the property that
\[
\left(1+\frac12\rho(\hat{\omega}/i)\right)\nabla_{\hat{A}_0} \hat{u}_0 =0.
\]
Furthermore, $D_{\hat{A}_0}\hat{u}_0=0$ if and only if $d\hat{\omega}=0$, where $D_{\hat{A}_0}$ is the Dirac operator associated with $\hat{A}_0$.
\end{Proposition}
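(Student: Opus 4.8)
The plan is to transplant Taubes' construction of the canonical connection \cite{Taubes1} to the $\Pin^{-}(2)$-twisted situation, carrying out the analysis on the double cover $X$ and then descending. First I would set up the dictionary between $\X$ and $X$. Since $\pi^{*}\hat{P}_{K}\cong P_{K}$, an $\OO(2)$-connection on $\hat{P}_{K}$ is the same datum as a $\U(1)$-connection $A$ on $P_{K}$ invariant under $A\mapsto\overline{\iota^{*}A}$; moreover $\pi^{*}\hat{W}^{+}_{0}=W^{+}_{0}$, $\pi^{*}\hat{u}_{0}=u_{0}$, $\pi^{*}\hat{\omega}=\omega$, $\pi^{*}\rho(\hat{\omega}/i)=\rho(\omega/i)$, and $\nabla_{\pi^{*}\hat{A}}u_{0}=\pi^{*}(\nabla_{\hat{A}}\hat{u}_{0})$, where throughout the $\SO(3)$-part of the connection is fixed to be the Levi--Civita spin connection. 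Because $\pi^{*}$ is injective on forms and on sections, the displayed equation, the condition $D_{\hat{A}_{0}}\hat{u}_{0}=0$, and the condition $d\hat{\omega}=0$ are each equivalent to their upstairs counterparts for the corresponding invariant connection, so it suffices to solve the problem on $(X,\mathfrak{s}_{0})$ and to verify that the connection produced there is invariant. (One can also argue directly on $\X$: the correction $1$-form then takes values in $\Omega^{1}(\X;i\ell_{\R})$, which multiplication by the unit section $\hat{u}_{0}$ identifies with $\Omega^{1}(\X;\ell_{\R})$, and the argument below goes through with hats throughout.)

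For existence and uniqueness upstairs, recall that $\rho(\omega/i)$ acts on $W^{+}_{0}=\underline{\C}\oplus K^{-1}$ with eigenvalue $+2$ on $\underline{\C}$ and $-2$ on $K^{-1}$; hence $1+\frac12\rho(\omega/i)$ is twice the projection onto $\underline{\C}$, and the equation says precisely that the $\underline{\C}$-component of $\nabla_{A_{0}}u_{0}$ vanishes. Starting from an arbitrary $\U(1)$-connection $A$ on $P_{K}$: metricity of $\nabla_{A}$ and $|u_{0}|=1$ force $\langle\nabla_{A}u_{0},u_{0}\rangle=0$, so the $\underline{\C}$-component of $\nabla_{A}u_{0}$ is purely imaginary, of the form $i\beta\otimes u_{0}$ for a unique real $1$-form $\beta$; since changing $A$ by $a\in\Omega^{1}(X;i\R)$ shifts this component by a fixed nonzero multiple of $a\otimes u_{0}$, there is a unique imaginary $1$-form whose addition to $A$ annihilates it, giving the required $A_{0}$, and uniqueness of $A_{0}$ follows by the same observation; the recipe does not depend on $A$. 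For invariance, $\iota_{*}J=-J\iota_{*}$ gives $\iota^{*}\mathfrak{s}_{0}\cong\overline{\mathfrak{s}_{0}}$, carrying the trivial summand and its canonical unit section to themselves, and $\iota^{*}\omega=-\omega$; hence the equation is equivariant under $(A,u_{0})\mapsto(\overline{\iota^{*}A},\overline{\iota^{*}u_{0}})=(\overline{\iota^{*}A},u_{0})$, so $\overline{\iota^{*}A_{0}}$ again solves it and equals $A_{0}$ by uniqueness. Thus $A_{0}$ descends to the desired $\OO(2)$-connection $\hat{A}_{0}$ on $\hat{P}_{K}$, whose uniqueness follows from that upstairs, the residual ``up to gauge'' being the sign ambiguity in $\hat{u}_{0}$.

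For the last assertion, with $A_{0}$ as above $\nabla_{A_{0}}u_{0}$ lies in $\Omega^{1}(X)\otimes K^{-1}$, so $D_{A_{0}}u_{0}=\rho(\nabla_{A_{0}}u_{0})$ is the Clifford contraction of the $K^{-1}$-component alone. Taubes' local computation \cite{Taubes1} identifies this with a fixed nonzero multiple of $\rho(d\omega)u_{0}\in\Gamma(W^{-})$; since $\eta\mapsto\rho(\eta)u_{0}$ is fibrewise injective on $3$-forms, $D_{A_{0}}u_{0}=0$ if and only if $d\omega=0$. Pulling back, $D_{\hat{A}_{0}}\hat{u}_{0}=0$ if and only if $d\hat{\omega}=0$; and as $(X,\omega)$ is symplectic we have $d\omega=0$, so indeed $d\hat{\omega}=0$ and $D_{\hat{A}_{0}}\hat{u}_{0}=0$ (both sides of the equivalence hold in the situation at hand).

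The only genuinely nontrivial input is Taubes' identification of $D_{A_{0}}u_{0}$ with $d\omega$; the main thing to check in order to import it is that the twisted Clifford multiplication $\rho$ on $\hat{\mathfrak{s}}_{0}$, the splitting $\hat{W}^{+}_{0}=\underline{\hat{\C}}\oplus\hat{K}^{-1}$, the unit section $\hat{u}_{0}$, and the correction $1$-form valued in $\Omega^{1}(\X;i\ell_{\R})$ behave under $\pi^{*}$ exactly as listed, so that the classical computation transfers verbatim --- which is precisely where the hypotheses of \thmref{thm:canonical}, yielding the canonical $\hat{P}$, the canonical $\hat{\mathfrak{s}}_{0}$, and the identity $\pi^{*}\hat{W}^{+}_{0}=W^{+}_{0}$, are used.
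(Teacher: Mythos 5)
Your proposal is correct and is exactly the elaboration the paper intends: the paper gives no proof beyond the remark ``Mimicking the argument of Taubes,'' and your route --- identify $1+\tfrac12\rho(\hat\omega/i)$ with twice the projection onto the trivial summand, construct $A_0$ on the double cover via metricity of $\nabla_A$ and the affine action of $\Omega^1(X;i\R)$, establish $I$-invariance of $A_0$ by equivariance of the defining equation plus uniqueness, and then descend to $\hat A_0$ --- is the natural way to carry this out, consistent with the paper's general ``consider in the upstairs and take the $I$-invariant part'' strategy used throughout.
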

Let us consider the $\Pin^-(2)$-monopole equations rescaled and perturbed as follows:
\begin{equation}\label{eq:pin2omega}
D_{\hat{A}}\hat{\phi}=0,\quad F^+_{\hat{A}} = rq(\hat{\phi}) - \frac{r}4i\hat{\omega} + F^+_{\hat{A}_0},
\end{equation}
where $\hat{A}$ is an $\OO(2)$-connection on $\hat{P}_K$, $\hat{\phi}\in\Gamma(\hat{W}^+_0)$, $q$ is the quadratic form defined in \cite{N1} and  $r$ is a positive real constant. 
(This is an analogue of Taubes' perturbation \cite{Taubes3}.)
Then we can see that $(\hat{A}_0,\hat{u}_0)$ is a solution to \eqref{eq:pin2omega} for every $r$.

To proceed further, it is convenient to move to the upstairs and consider the $I$-invariant part.
Let $(A_0,u_0)$ be the lift of  $(\hat{A}_0, \hat{u}_0)$.
Then a spinor $\phi\in\Gamma(W^+_0)$ can be written as $\phi=\alpha u_0 + \beta$, where $\alpha$ is a complex-valued function on $X$ and $\beta\in\Gamma(K^{-1})$.
Then a solution to the equation \eqref{eq:pin2omega} corresponds to an $I$-invariant solution to the perturbed equation due to Taubes \cite{Taubes3}:
\begin{equation}\label{eq:swomega}
\begin{aligned}
D_{A}\phi &=0,\\
F_{A}^+-F_{A_0}^+ &=-\frac{ir}8(1-|\alpha|^2+|\beta|^2)  \omega+ \frac{ir}4(\alpha\bar{\beta}+\bar{\alpha}\beta).
\end{aligned}
\end{equation}
\begin{proof}[Proof of \thmref{thm:canonical}]
By construction, $({A}_0,{u}_0)$ is an $I$-invariant solution to \eqref{eq:swomega} for every $r$.
Taubes~\cite{Taubes1, Taubes2, Taubes3}(see also Kotschick \cite{Kot}) proved that there is no solution  to \eqref{eq:swomega} except $({A}_0,{u}_0)$ for large $r$.
It follows from this that $(\hat{A}_0, \hat{u}_0)$ is a unique solution to \eqref{eq:pin2omega} for large $r$.
These imply \thmref{thm:canonical}.
\end{proof}
\begin{proof}[Proof of  \thmref{thm:more}]
Suppose the $\Pin^-(2)$-monopole invariant on a $\Spincm$ structure $\hat{\mathfrak{s}}$ is nonzero.
Then the equations \eqref{eq:pin2omega} on $\hat{\mathfrak{s}}$ has a solution $(\hat{A},\hat{\phi})$ for every $r$.
Then the lift $(A,\phi)$ of $(\hat{A},\hat{\phi})$ is an $I$-invariant solution to \eqref{eq:swomega} for $r$. 
By Kotschick \cite{Kot}({\it Cf.} Taubes~\cite{Taubes2}), the existence of solutions for large $r$ implies that 
\begin{equation}\label{eq:ineq}
\left|c_1(L)\cdot[\omega]\right|\leq c_1(K)\cdot[\omega],
\end{equation}
where $L$ is the determinant line bundle of $(X, \mathfrak{s})$.
Let $\hat{L}$ be the characteristic bundle for $(\X,\hat{\mathfrak{s}})$. 
Then  $L=\pi^*\hat{L}$.
The inequality \eqref{eq:more} follows from \eqref{eq:ineq}

By \cite{Taubes3}, we can find an embedded symplectic curve $C$ in $X$ such that $e=P.D.[C]$ satisfies $e^2 = c_1(K\otimes L)$.
If $X$ contains embedded $2$-spheres with self-intersection number $-1$, then blow them down.
The resulting manifold is a minimal symplectic manifold $X^\prime$ with another embedded symplectic curve $C^\prime$ (see, e.g.,\cite{OO}).
Then the proof of Theorem 0.2(6) of \cite{Taubes3} implies that the virtual dimension $d(\mathfrak{s})$ of the moduli space for $(X, \mathfrak{s})$ is $0$.
Therefore $d(\hat{\mathfrak{s}})=\frac12d(\mathfrak{s})=0$.
\end{proof}
%
%
%
%
\section{Real K\"{a}hler surfaces}\label{sec:realKL}
%
%
The purpose of this section is to prove that the $\Pin^-(2)$-monopole moduli space on a real K\"{a}hler surface can be identified with the $I$-invariant moduli space of holomorphic simple pairs, and the space of $I$-invariant effective divisors.
The moduli space of vortices is also introduced as an intermediate object.
The goal of this section is \corref{cor:isom1} and \corref{cor:isom}.

Let $(X,\omega,\iota)$ be a compact K\"{a}hler surface with anti-holomorphic free involution $\iota$ such that $\iota^*\omega=-\omega$.
Note that the pull-back of a $(p,q)$-form by the anti-linear map $\iota$ is a $(q,p)$-form, and the complex conjugation of a $(q,p)$-form is a $(p,q)$-form.
We define the involution $I$ on the space of $(p,q)$-forms $\Omega^{p,q}(X)$ by
\[
I(\alpha)=\overline{\iota^*\alpha},\quad \alpha\in\Omega^{p,q}(X).
\] 
Note that $K=\Lambda^{2,0}(X)$, $K^{-1}=\Lambda^{0,2}(X)$, $\iota^*K=K^{-1}$.

Suppose that there is a canonical $\Spincm$ structure $\hat{\mathfrak{s}}_0$ on $X\to \hat{X}=X/\iota$.
As explained in \subsecref{subsec:canonical}, every $\Spincm$ structure on $X\to \X$ is obtained from $\hat{\mathfrak{s}}_0$ and an $\R^2$-bundle $\hat{E}$ twisted along $\ell_{\R}$ as $\hat{\mathfrak{s}}_0\hat{\otimes}\hat{E}$.

For a $\Spincm$ structure $\hat{\mathfrak{s}}=\hat{\mathfrak{s}}_0\hat{\otimes}\hat{E}$, there exists a $\Spinc$ structure $\mathfrak{s}=\mathfrak{s}_0\otimes E$ on $X$ which is the canonical reduction of $\pi^*\hat{\mathfrak{s}}$, whose positive spinor bundle is $W^+=E\oplus(E\otimes K^{-1})$ such that $E\cong \pi^*\hat{E}$ as $\R^2$-bundles.
Note that $\iota^*\mathfrak{s}=\bar{\mathfrak{s}}$. 
Then $\iota^* E=\bar{E}$ and  $E$ naturally admits a Hermitian metric $h$ such that $\iota^*h=\bar{h}$.

Let $C$ be a Hermitian connection on $K^{-1}$ induced by the Chern connection on $TX$  associated with the K\"{a}hler structure. 

Recall that the Dirac operator $D$ on the canonical $\Spinc$ structure $\mathfrak{s}_0$ is identified with 
\[
D=\sqrt{2}(\bar{\partial}+\bar{\partial}^*)\colon \Omega^{0,0}(X)\oplus\Omega^{0,2}(X)\to\Omega^{0,1}(X).
\]
Since $\iota$ is anti-holomorphic, the pull-back of $D$ by $\iota$ is 
\[
\iota^*D = \sqrt2(\partial+\partial^*)\colon\Omega^{0,0}(X)\oplus\Omega^{2,0}(X)\to\Omega^{1,0}(X).
\]
Then we see that the Dirac operator $D$ is $I$-equivariant.

Next we consider Dirac operators on a $\Spinc$ structure $\mathfrak{s}=\mathfrak{s}_0\otimes E$. 
For a Hermitian connection $A$ on $\det(W^+)=E^2\otimes K^{-1}$, there is a unique Hermitian connection $B$ on $E$ such that $A=C\otimes B^{\otimes 2}$.
Then the Dirac operator $D_A$ associated with $A$ is identified with 
\[
D_A = \sqrt2(\bar{\partial}_B+\bar{\partial}_B^*)\colon \Omega^{0,0}(E)\oplus\Omega^{0,2}(E)\to\Omega^{0,1}(E).
\]
The pull-back $B^\prime=\iota^*B$ is a Hermitian connection on $\bar{E}=\iota^*E$, and the pull-back $\iota^*D_A$ can be written as 
\[
\iota^*D_A = \sqrt2(\partial_{B^\prime}+\partial_{B^\prime}^*)\colon \Omega^{0,0}(\bar{E})\oplus\Omega^{2,0}(\bar{E})\to\Omega^{1,0}(\bar{E}).
\]

For an $\OO(2)$-connection $\hat{B}$ on $\hat{E}$, we have a Hermitian connection $B$ on $E$ which is the $\U(1)$-reduction of $\pi^* \hat{B}$.
Then $B$ is $I$-invariant, i.e., $B=\overline{\iota^*B}$.
For such a connection $B$, the Dirac operator $D_A=\sqrt2(\bar{\partial}_B+\bar{\partial}_B^*)$ is also $I$-equivariant.

Recall the identifications:
\[
H^2(X;\C)=H^{1,1}\oplus H^{2,0}\oplus H^{0,2},\quad H_+(X;i\R) = i\R\omega\oplus H^{0,2}.
\]
Note that the $I=\overline{\iota^*(\cdot)}$-action preserves $H_+(X;i\R)$ and 
\[
H_+(\X;\ell_{\R})\cong H_+(X;i\R)^I = i\R\omega\oplus (H^{0,2})^I.
\]
In particular, we have the following.
\begin{Proposition}
If $b_+^{\ell}=\rank H_+(\X;\ell_{\R})\geq 2$, then $(H^{2,0})^I\cong(H^{0,2})^I\neq \emptyset$.
\end{Proposition}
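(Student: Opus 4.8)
The plan is to deduce the statement directly from the identification
\[
H_+(\X;\ell_{\R})\cong H_+(X;i\R)^I = i\R\omega\oplus (H^{0,2})^I
\]
recalled just above, together with a one-line dimension count and the $I$-equivariance of complex conjugation. The summand $i\R\omega$ is one-dimensional over $\R$ and is visibly $I$-invariant: since $\iota^*\omega=-\omega$ and $\omega$ is real, $I(i\omega)=\overline{\iota^*(i\omega)}=\overline{-i\omega}=i\omega$. Hence $b_+^{\ell}=\rank H_+(\X;\ell_{\R})=1+\dim_{\R}(H^{0,2})^I$, so the hypothesis $b_+^{\ell}\geq 2$ forces $\dim_{\R}(H^{0,2})^I\geq 1$; that is, $(H^{0,2})^I$ contains a nonzero element.

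It then remains to produce a natural real-linear isomorphism $(H^{2,0})^I\cong(H^{0,2})^I$. For this I would use complex conjugation $c\colon H^{2,0}\to H^{0,2}$, $\eta\mapsto\bar\eta$, which is an anti-linear bijection because $X$ is Kähler. The point is that $c$ intertwines the two $I$-actions: since $\iota$ is a smooth involution, $\iota^*$ commutes with conjugation and $\iota^*\iota^*=\id$, so for $\eta\in H^{2,0}$
\[
c\big(I(\eta)\big)=\overline{\overline{\iota^*\eta}}=\iota^*\eta=\overline{\iota^*\bar\eta}=I\big(c(\eta)\big).
\]
A similar computation gives $I^2=\id$ on $H^{2,0}$ and on $H^{0,2}$, so these $I$-actions are genuine anti-linear involutions and their fixed sets are real forms. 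Therefore $c$ restricts to a bijection $(H^{2,0})^I\to(H^{0,2})^I$, and being additive it is a real-linear isomorphism; combining with the previous step, $(H^{2,0})^I\cong(H^{0,2})^I\neq 0$.

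There is essentially no serious obstacle here; once the identification $H_+(\X;\ell_{\R})\cong H_+(X;i\R)^I$ is in hand, the argument is pure linear algebra. The only things that need a little care are keeping track of the anti-linearity of $I$ on the spaces of $(2,0)$- and $(0,2)$-forms, so that the relevant fixed sets are real rather than complex subspaces and the one-dimensional piece $i\R\omega$ accounts for exactly the single extra unit in $b_+^{\ell}=1+\dim_{\R}(H^{0,2})^I$, and noting that all the isomorphisms involved are only real-linear, which is all that the assertion $(H^{2,0})^I\cong(H^{0,2})^I$ requires.
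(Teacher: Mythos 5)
Your proof is correct and takes the same route the paper implicitly uses: the identification $H_+(\X;\ell_{\R})\cong H_+(X;i\R)^I = i\R\omega\oplus (H^{0,2})^I$ gives $b_+^{\ell}=1+\dim_{\R}(H^{0,2})^I$, and complex conjugation provides the real-linear isomorphism $(H^{2,0})^I\cong(H^{0,2})^I$. The paper states this proposition without proof as a direct consequence of the preceding display, and your write-up simply supplies the missing details, including the correct verification that $i\omega$ is $I$-fixed and that conjugation intertwines the anti-linear $I$-actions.
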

%
%
\subsection{Seiberg--Witten equations}
%
%
The Seiberg--Witten equations on K\"{a}hler surfaces can be written as follows(\cite{Teleman,Morgan}):
\begin{equation}\label{eq:SWKL}
\begin{aligned}
\bar{\partial}_B\alpha + \bar{\partial}_B \beta = & 0\\
2F_B^{0,2} + 2\pi i\eta^{0,2}-\frac12\beta\bar{\alpha} =&0\\
2F_B^{2,0} + 2\pi i\eta^{2,0}+\frac12\alpha\bar{\beta} =&0\\
\{\Lambda_g(F_B + \pi i \eta)-\frac{i}2s_g + \frac{i}8(|\beta|^2 -|\alpha|^2)\}\omega = &0
\end{aligned}
\end{equation}
These are the equations for  Hermitian connections $B$ on $E$ and sections $(\alpha,\beta)\in (\Omega^{0,0}\oplus\Omega^{0,2})(E)$.
The perturbation term is given by $\eta\in\Omega^2(X)$,  $\Lambda_g$ denotes the adjoint of the multiplication operator $\omega\wedge\cdot\colon i\Omega^{0,0}\to i\Omega^{1,1}$, and $s_g$ is the scalar curvature. 
(Here we use the fact that $i\Lambda_gF_C=s_g$ for the Chern connection $C$.)
If we take an $I$-invariant $\eta$, then \eqref{eq:SWKL} is $I$-equivariant.

The discussion below is largely indebted to Teleman's excellent exposition \cite{Teleman}.
The general principle is to ``consider in the upstairs and take the $I$-invariant part".
The next two theorems are obtained by restricting everything to the $I$-invariant part in the corresponding theorems of \cite{Teleman}.
\begin{Theorem}[\cite{Teleman}, Th\'{e}or\`{e}me 8.1.7]\label{thm:SWcpl}
Suppose $\eta$ is an $I$-invariant closed $(1,1)$-form, and
\[
\Theta:=\frac12\langle ([\eta]-2c_1(E)+c_1 (K))\cup[\omega],[X]\rangle\neq 0
\]
Then an $I$-invariant triple $(B,\alpha,\beta)$ is a solution to \eqref{eq:SWKL} if and only if:\\
\Romnum{1}. $\Theta>0$ and 
\begin{equation}\label{eq:SWa}
\beta=0,\quad \bar{\partial}_B\alpha=0,\quad F_B^{0,2}=0,\quad i\Lambda_gF_B + \frac18|\alpha|^2=\pi\Lambda_g\eta-\frac{s_g}2
\end{equation}
\Romnum{2}. $\Theta<0$ and 
\begin{equation}\label{eq:SWb}
\alpha=0,\quad \bar{\partial}^*_B\beta=0,\quad F_B^{0,2}=0,\quad i\Lambda_gF_B - \frac18|\beta|^2=\pi\Lambda_g\eta-\frac{s_g}2
\end{equation}
\end{Theorem}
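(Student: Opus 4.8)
The plan is to deduce this statement from \cite[Th\'{e}or\`{e}me~8.1.7]{Teleman} by restricting everything to the $I$-fixed locus, which is exactly the principle announced in the paragraph preceding it. Recall that for an $I$-invariant perturbation $\eta$ the equations \eqref{eq:SWKL} are $I$-equivariant, so $I$ acts on the set of all solutions on $X$, and the $I$-invariant triples $(B,\alpha,\beta)$ solving \eqref{eq:SWKL} are precisely the fixed points of this action. Teleman's theorem gives, for every configuration on $X$ (not assumed $I$-invariant) and under the hypothesis $\Theta\neq0$, the equivalence between solving \eqref{eq:SWKL} and satisfying the reduced system \eqref{eq:SWa} when $\Theta>0$ or \eqref{eq:SWb} when $\Theta<0$; the number $\Theta$ is purely topological, hence insensitive to $I$. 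So the whole content to be supplied is that this equivalence is compatible with the $I$-action, i.e.\ that the reduced systems cut out $I$-stable subsets of the configuration space; once this is checked, intersecting both sides of Teleman's equivalence with the fixed locus yields the claim.

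First I would verify that \eqref{eq:SWa} and \eqref{eq:SWb} are $I$-equivariant. The conditions $\beta=0$ and $\alpha=0$ are $I$-stable because $I=\overline{\iota^*(\cdot)}$ carries $\Omega^{0,0}(E)$ and $\Omega^{0,2}(E)$ to themselves, using $\iota^*E=\bar E$, the exchange $\Lambda^{0,q}\leftrightarrow\Lambda^{q,0}$ under the anti-holomorphic $\iota$, and the canonical identification $\overline{\bar E}=E$ after complex conjugation. For an $I$-invariant connection $B$ it is already recorded, just before the statement, that $D_A=\sqrt{2}(\bar\partial_B+\bar\partial_B^*)$ is $I$-equivariant; this makes $\bar\partial_B\alpha=0$, $\bar\partial_B^*\beta=0$, and --- via $\iota^*\bar\partial_B^2=\partial_{\iota^*B}^2$ --- also $F_B^{0,2}=0$ into $I$-stable conditions, while the final scalar equation is a real equation whose ingredients $i\Lambda_gF_B$, $|\alpha|^2$, $|\beta|^2$, $\Lambda_g\eta$, $s_g$ are each $I$-invariant for $I$-invariant $(B,\alpha,\beta)$. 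The step I expect to be the main obstacle is not any single one of these checks but the bookkeeping that makes them mutually consistent: one must keep track of how $I$ intertwines $E$ with $\bar E$ and holomorphic with antiholomorphic structures, the key point being that an $I$-invariant unitary connection $B$ induces a holomorphic structure on $E$ for which $\iota$ is antiholomorphic, so that $\iota^*$ sends $B$-holomorphic sections to $\iota^*B$-antiholomorphic ones and $I$ sends $B$-holomorphic sections back to $B$-holomorphic ones --- which is precisely what makes the reduced holomorphic equations genuinely $I$-equivariant.

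With that in hand the argument closes quickly. An $I$-invariant triple solving \eqref{eq:SWKL} satisfies, by Teleman's theorem, one of \eqref{eq:SWa}, \eqref{eq:SWb} with the corresponding sign of $\Theta$, and the resulting triple is $I$-invariant by the previous paragraph; conversely an $I$-invariant triple solving \eqref{eq:SWa} with $\Theta>0$, or \eqref{eq:SWb} with $\Theta<0$, solves \eqref{eq:SWKL} by Teleman's theorem (here one uses $\eta^{0,2}=0$ since $\eta$ is a $(1,1)$-form) and remains $I$-invariant. The analytic heart of \cite[Th\'{e}or\`{e}me~8.1.7]{Teleman} --- the global alternative $\alpha\equiv0$ or $\beta\equiv0$, obtained from $\alpha\bar\beta\equiv0$ pointwise together with unique continuation for $\bar\partial_B$, and the sign of $\Theta$ being forced by integrating the scalar equation over $X$ --- uses nothing about $I$ and so transfers verbatim to $I$-invariant configurations; no analytic input beyond what is cited is required.
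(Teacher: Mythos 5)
Your proposal is correct and follows exactly the paper's stated approach: the paper itself observes, in the paragraph preceding the theorem, that the next two theorems ``are obtained by restricting everything to the $I$-invariant part in the corresponding theorems of \cite{Teleman}.'' One small simplification worth noting: because Teleman's Th\'eor\`eme~8.1.7 is a pointwise \emph{iff} for the \emph{same} triple $(B,\alpha,\beta)$ (it rewrites the conditions the triple satisfies rather than producing a new triple), the $I$-stability checks you carry out in the middle paragraph are automatic once the ambient equations \eqref{eq:SWKL} are known to be $I$-equivariant, so the argument really does reduce to ``take fixed points of equal solution sets'' with no further bookkeeping.
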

Let $C^{\vee}$ be the Hermitian connection on $K$ induced from the Chern connection $C$. 
For a Hermitian connection on $E$, let $B^\prime$ be the Hermitian connection on $K\otimes \bar{E}$ such that $B\otimes B^\prime=C^{\vee}$.
For $\beta\in\Omega^{0,2}(E)=\Gamma(E\otimes K^{-1})$, let $\varphi=\bar{\beta}\in\Gamma(\bar{E}\otimes K)$.
Then the condition $\bar{\partial}^*_B\beta=0$ is equivalent to $\bar{\partial}_{B^\prime}\varphi=0$ by the Serre duality, and \eqref{eq:SWb} can be rewritten as
\[
\alpha=0,\quad \bar{\partial}_{B^\prime}\varphi=0,\quad F_{B^\prime}^{0,2}=0,\quad i\Lambda_gF_{B^\prime} + \frac18|\varphi|^2=-\pi\Lambda_g\eta+\frac{s_g}2.
\]

If $\eta$ is not a $(1,1)$-form, then we have the following. 
\begin{Theorem}[\cite{Teleman}, Th\'{e}or\`{e}me 9.3.1]\label{thm:SWml}
Suppose an $I$-invariant $2$-form $\eta$ has a form of  $\eta=\eta^{2,0}\oplus\eta^{1,1}\oplus\overline{\eta^{2,0}}$
 where $\eta^{2,0}$ is an $I$-invariant non-zero holomorphic $2$-form.
Then an $I$-invariant triple $(B,\alpha,\beta)$ is a solution to \eqref{eq:SWKL} if and only if:
\begin{equation}\label{eq:SWcpl}
\begin{gathered}
\alpha\bar{\beta} = -8\pi i\eta^{2,0},\quad \bar{\partial}_B\alpha=\bar{\partial}^*_B\beta=0, \quad F_B^{0,2}=0, \\
i\Lambda_gF_B+\frac18(|\beta|^2-|\alpha|^2)=\pi\Lambda_g\eta^{1,1}-\frac{s_g}2
\end{gathered}
\end{equation}
\end{Theorem}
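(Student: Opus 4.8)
The plan is to obtain this exactly as \thmref{thm:SWcpl} is obtained: quote the corresponding statement of \cite{Teleman} (Th\'{e}or\`{e}me 9.3.1) on $X$, forgetting the involution, and then restrict the equivalence to $I$-invariant triples. That theorem asserts that, for a $2$-form $\eta=\eta^{2,0}\oplus\eta^{1,1}\oplus\overline{\eta^{2,0}}$ with $\eta^{2,0}$ a nonzero holomorphic $(2,0)$-form, an arbitrary triple $(B,\alpha,\beta)$ solves \eqref{eq:SWKL} if and only if it solves \eqref{eq:SWcpl}. The proof there is the standard K\"{a}hler argument: one combines the Weitzenb\"{o}ck formula for the Dirac equation with the curvature equations and integrates to force $\bar\partial_B\alpha=\bar\partial^*_B\beta=0$ and $F_B^{0,2}=0$, then reads off the constraint $\alpha\bar\beta=-8\pi i\,\eta^{2,0}$ from the $(2,0)$-part of the curvature equation, leaving a single real vortex-type scalar equation.

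First I would record that the whole set-up is genuinely $I$-equivariant, so that ``restrict to the $I$-invariant part'' is meaningful. Since $\iota^*\omega=-\omega$ and $\iota_*\circ J=-J\circ\iota_*$ force $\iota^*g=g$, the Hodge operators, $\Lambda_g$, and $s_g$ are $I$-compatible; by hypothesis $\eta^{1,1}$ is $I$-invariant and $\overline{\iota^*\eta^{2,0}}=\eta^{2,0}$, so the Hodge decomposition of $\eta$ is $I$-stable and $\eta$ itself is $I$-invariant; and, as already noted in the text, with an $I$-invariant connection $B$ the Dirac operator $D_A$ is $I$-equivariant. Hence \eqref{eq:SWKL} is an $I$-equivariant system, $I$ acts on its solution set, and ``$I$-invariant triple'' is unambiguous via the identifications $\iota^*E=\bar E$, $\iota^*h=\bar h$, $\iota^*K=K^{-1}$ set up earlier.

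With this in place the theorem is immediate: applying Teleman's equivalence to an $I$-invariant triple $(B,\alpha,\beta)$ shows that $(B,\alpha,\beta)$ solves \eqref{eq:SWKL} if and only if it solves \eqref{eq:SWcpl}, the $I$-invariance being simply carried along on both sides. Equivalently, one checks that \eqref{eq:SWcpl} is itself $I$-equivariant --- which follows from its equivalence to the $I$-equivariant system \eqref{eq:SWKL} --- and then intersects both solution sets with the $I$-fixed locus.

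I do not anticipate any analytic difficulty here, since Teleman supplies all the analysis; the only point that really needs care, and is effectively the whole content of the argument, is the bookkeeping that makes the $I$-action on configurations well-defined and compatible with Teleman's reduction. Concretely, one must confirm that the Hermitian metric $h$ on $E$, the reference Chern connection $C$ on $K^{-1}$, and the splitting $W^+=E\oplus(E\otimes K^{-1})$ can be, and have been, chosen $\iota$-compatibly, so that the equivalence of \eqref{eq:SWKL} and \eqref{eq:SWcpl} restricts cleanly to the $I$-invariant configurations rather than merely holding up to an unwanted gauge transformation. As $g$, $h$, and $C$ are $\iota$-(anti)invariant by construction, this is routine.
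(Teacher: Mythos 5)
Your proposal is exactly the argument the paper uses: it explicitly states that Theorems \ref{thm:SWcpl} and \ref{thm:SWml} "are obtained by restricting everything to the $I$-invariant part in the corresponding theorems of \cite{Teleman}," with the same preliminary observations about $I$-equivariance of the Dirac operator, metric, and perturbation that you record. The extra care you flag about the $\iota$-compatibility of $h$, $C$, and the spinor splitting is precisely what the paper sets up in the paragraphs preceding these theorems, so nothing is missing.
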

Let $C^{\vee}$, $B^\prime$ and $\varphi$  be as above. 
Then \eqref{eq:SWcpl} can be rewritten as
\begin{equation}\label{eq:Vcpl}
\begin{gathered}
\alpha\varphi = -8\pi i\eta^{2,0},\quad B\otimes B^\prime=C^{\vee},\\
\bar{\partial}_B\alpha=\bar{\partial}_{B^\prime}\varphi=0, \quad F_B^{0,2}=F_{B^\prime}^{0,2}=0, \\
\dfrac{i}2\Lambda_g(F_B-F_{B^\prime})+\frac18(|\varphi|^2-|\alpha|^2)=\pi\Lambda_g\eta
\end{gathered}
\end{equation}
%
%
\subsection{Vortex equations}
%
%
Let $(X,\omega,\iota)$ be a compact K\"{a}hler surface with anti-holomorphic free involution $\iota$.
Suppose we have  a $C^\infty$ Hermitian line bundle $(E,h)$ over $X$ with an isomorphism $\iota^*(E,h)\cong(\bar{E},\bar{h})$. 
This isomorphism defines the bundle map $I=\overline{\iota^*(\cdot)}$ covering $\iota$ which is the composite map of 
\[
\begin{CD}
E@>{\iota^*}>>\iota^*E\cong \bar{E}@>{\bar{(\cdot)}}>>E.
\end{CD}
\]
We suppose $I$ generates an order-2 action (involution) on $E$.
We define the $I$-action on $\Omega^0(E)$ also by $I=\overline{\iota^*(\cdot)}$.
Let $\mathcal{A}(E,h)$ be the space of Hermitian connections on $E$.
Then the involution $I$ naturally induces an involution on $\mathcal{A}(E,h)$, also denoted by $I$.
The gauge transformation group $\G=C^\infty(X;S^1)$ acts on $ \mathcal{A}(E,h)\times\Omega^0(E)$ by
\[
(B,\phi)\cdot f = (B+f^{-1}df,f^{-1}\phi)\text{ for } (B,\phi)\in\mathcal{A}(E,h)\times\Omega^0(E), f\in\G.
\]
A configuration $(B,\phi)$ with $\phi\neq 0$ is called an {\it irreducible}.
The group $\G$ acts on the space of irreducibles freely.
We define the involution $I$ on $\G$ by $I(f)=\overline{\iota^* f}$.
Then the $\G$-action on $\mathcal{A}(E,h)\times\Omega^0(E)$ is $I$-equivariant.

\begin{Definition}
Let $t\colon X\to \R$ be a $C^\infty$-function.
A $t$-vortex is a solution $(B,\phi)\in \mathcal{A}(E,h)\times\Omega^0(E)$ to the system of the equations 
\begin{equation}\label{eq:vortex}
\begin{gathered}
\bar{\partial}_B\phi=0\\
F_B^{0,2} = 0\\
i\Lambda_gF_B + \frac12|\phi|^2 -t =0
\end{gathered}
\end{equation}
If $(B,\phi)$ is a solution to \eqref{eq:vortex}, then 
\begin{equation}\label{eq:nonneg}
\Xi:=\frac{1}{2\pi}\int_Xtdvol_g - \langle c_1(E)\cup[\omega],[X]\rangle = \frac1{2\pi}\int_X(t-i\Lambda_gF_B)dvol_g=\frac1{4\pi}\|\phi\|^2_{L^2}\geq0.
\end{equation}
\end{Definition}
If $t$ is $\iota$-invariant, that is, $\iota^* t =t$, then the system \eqref{eq:vortex} is $I$-equivariant.
Define $I$-invariant moduli spaces as follows:
\begin{align*}
\mathcal{V}_t(E)^I=&\{ \text{ $I$-invariant $t$-vortices }\}/\G^I,\\
\mathcal{V}_t^*(E)^I=&\{ \text{ $I$-invariant irreducible $t$-vortices }\}/\G^I.
\end{align*}
If $\Xi > 0$, then $\mathcal{V}_t^*(E)^I=\mathcal{V}_t(E)^I$.

As usual, we take $L^2_k$-completion of $ \mathcal{C}^*(E) := \mathcal{A}(E,h)\times (\Omega^0(E)\setminus \{0\})$ and $L_{k+1}^2$-completion of $\G$ for sufficiently large $k$.
We use the notation $(\cdot)_k$  for the completed spaces.
For a generic choice of $I$-invariant $t$ with positive $\Xi$, the space $\mathcal{V}_t^*(E)^I$ is a submanifold of the Hilbert manifold 
\[
(\mathcal{B}_k^*)^I := ( \mathcal{C}^*(E)_k)^I/\G_{k+1}^I.
\]
For an orbit $[v]=[(B,\phi)]\in(\mathcal{B}_k^*)^I$, the tangent space of $(\mathcal{B}_k^*)^I$ at $[v]$ is given by
\[
T_{[v]}(\mathcal{B}_k^*)^I =\{(\dot{B},\dot{\phi})\,|\, d^*\dot{B} - i\Ima (\dot{\phi}\bar{\phi})=0\,\}^I.
\]

The following is a direct consequence of \thmref{thm:SWcpl}.
\begin{Corollary}
Suppose $\eta$ is an $I$-invariant closed $(1,1)$-form.
Let $t=\pi\Lambda_g\eta-s_g/2$.
Then we have the following identifications:
\begin{enumerate}
\item $\M(X,\mathfrak{s}_0\otimes E)^I\cong \mathcal{V}_t^*(E)^I$, if $\Theta>0$.
\item $\M(X,\mathfrak{s}_0\otimes E)^I\cong \mathcal{V}_{-t}^*(K\otimes E^{-1})^I$, if $\Theta<0$.
\end{enumerate}
\end{Corollary}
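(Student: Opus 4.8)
The plan is to deduce the Corollary directly from \thmref{thm:SWcpl}; the only work is a rescaling of the spinor and a check that the natural operations involved are equivariant for the gauge action and the involution $I$. Consider first the case $\Theta>0$. By \thmref{thm:SWcpl}(I), an $I$-invariant triple $(B,\alpha,\beta)$ solves the Seiberg--Witten equations \eqref{eq:SWKL} for the chosen $I$-invariant closed $(1,1)$-form $\eta$ if and only if $\beta=0$ and $(B,\alpha)$ satisfies \eqref{eq:SWa}. Setting $\phi=\tfrac12\alpha$ converts the three conditions of \eqref{eq:SWa} into $\bar\partial_B\phi=0$, $F_B^{0,2}=0$ and $i\Lambda_gF_B+\tfrac12|\phi|^2=\pi\Lambda_g\eta-\tfrac{s_g}{2}=t$, which are precisely the $t$-vortex equations \eqref{eq:vortex} on $(E,h)$. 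Thus $(B,\alpha)\mapsto(B,\tfrac12\alpha)$ is a bijection between $I$-invariant solutions of \eqref{eq:SWKL} and $I$-invariant $t$-vortices on $E$; it is continuous with continuous inverse in the $L^2_k$-topologies, it is equivariant for $\G$ (gauge acts on every section by multiplication by $f^{-1}$, which commutes with scaling by the fixed real constant $\tfrac12$) and for $I=\overline{\iota^*(\cdot)}$ (scaling by a real constant commutes with conjugation). Hence it descends to a homeomorphism $\M(X,\mathfrak{s}_0\otimes E)^I\cong\mathcal{V}_t(E)^I$. Since $\phi=0$ iff $\alpha=0$, and since a reducible solution (zero spinor) would force $\Theta=0$ (integrate the curvature equation against $\omega$ and use $i\Lambda_gF_C=s_g$), for $\Theta\neq 0$ every solution is irreducible, so under this identification $\M(X,\mathfrak{s}_0\otimes E)^I=\mathcal{V}_t^*(E)^I=\mathcal{V}_t(E)^I$. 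This gives (1).

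For $\Theta<0$ I would use \thmref{thm:SWcpl}(II) together with the Serre-duality reformulation recorded just after it: an $I$-invariant solution has $\alpha=0$, and, writing $\varphi=\bar\beta\in\Gamma(\bar E\otimes K)\cong\Gamma(K\otimes E^{-1})$ and letting $B'$ be the connection on $K\otimes\bar E$ determined by $B\otimes B'=C^{\vee}$, the equations become $\bar\partial_{B'}\varphi=0$, $F_{B'}^{0,2}=0$ and $i\Lambda_gF_{B'}+\tfrac18|\varphi|^2=-\pi\Lambda_g\eta+\tfrac{s_g}{2}=-t$. Putting $\varphi'=\tfrac12\varphi$ turns the last equation into $i\Lambda_gF_{B'}+\tfrac12|\varphi'|^2=-t$, so $(B',\varphi')$ is a $(-t)$-vortex on $K\otimes E^{-1}$ with its induced Hermitian structure. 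The assignment $(B,\beta)\mapsto(B',\tfrac12\bar\beta)$ is again a bijection, continuous with continuous inverse; on the gauge side it corresponds to the automorphism $f\mapsto\bar f=f^{-1}$ of the abelian group $\G$ (since $B\otimes B'=C^{\vee}$ is fixed, a change $B\mapsto B+f^{-1}df$ forces $B'\mapsto B'+\bar f^{-1}d\bar f$, and $\bar\beta\mapsto\overline{f^{-1}\beta}=\bar f^{-1}\bar\beta$), and since the passages $B\mapsto B'$ and $\beta\mapsto\bar\beta$ are natural with respect to $\iota^*$ and complex conjugation it is $I$-equivariant. As $\varphi'=0$ iff $\beta=0$, and $\Theta<0$ again excludes reducibles, the bijection descends to $\M(X,\mathfrak{s}_0\otimes E)^I\cong\mathcal{V}_{-t}^*(K\otimes E^{-1})^I$, proving (2).

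I do not expect a serious obstacle: once \thmref{thm:SWcpl} and the Serre-duality rewriting are granted, the Corollary is essentially bookkeeping. The points needing care are (a) keeping the normalizations straight, so that the coefficient $\tfrac18|\cdot|^2$ in the Seiberg--Witten curvature equation matches the $\tfrac12|\cdot|^2$ of the vortex equation under $\phi=\tfrac12\alpha$, resp. $\varphi'=\tfrac12\varphi$, and so that the curvature constant comes out as exactly $t$ in case (1) and $-t$ in case (2); and (b) checking that the Serre-duality identification $\beta\leftrightarrow\varphi$, the passage $B\leftrightarrow B'$ and the gauge correspondence $f\leftrightarrow f^{-1}$ are simultaneously compatible with the $\G$-action and with $I$, so the bijections descend to the $I$-invariant quotients. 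If one wants these identifications to be diffeomorphisms of smooth manifolds rather than mere homeomorphisms, one invokes the genericity of $t$ (equivalently of $\eta$) discussed above, under which both sides are transversally cut out.
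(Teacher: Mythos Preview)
Your argument is correct and is exactly the approach the paper intends: the paper records this Corollary as ``a direct consequence of \thmref{thm:SWcpl}'' without further proof, and you have simply written out the routine bookkeeping (the rescaling $\phi=\tfrac12\alpha$, respectively $\varphi'=\tfrac12\varphi$, the Serre-duality substitution $B\leftrightarrow B'$, and the checks of $\G$- and $I$-equivariance) that the paper leaves implicit. The exclusion of reducibles via $\Theta\neq 0$ is likewise the standard observation, equivalent to the remark in the paper that $\Xi>0$ forces $\mathcal{V}_t^*(E)^I=\mathcal{V}_t(E)^I$.
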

%
%
\subsection{Holomorphic simple pairs}
%
%
Let $(X,\omega,\iota)$ be a compact K\"{a}hler surface with anti-holomorphic involution $\iota$, and $E$ a $C^\infty$ complex line bundle such that $\iota^*E\cong\bar{E}$.
As before, we suppose $I=\overline{\iota^*(\cdot)}$ generates an involution on $E$. 
We define the $I$-action on $\Omega^0(E)$ also by $I=\overline{\iota^*(\cdot)}$.
Let $\mathscr{A}^{0,1}(E)$ be the space of semiconnections on $E$.
Note that a semiconnection $\delta\in\mathscr{A}^{0,1}(E)$ can be written as $\delta=\bar{\partial}_B$ for some complex linear connection $B$ on $E$.
The involution $I$ naturally induces an involution on $\mathscr{A}^{0,1}(E)$, also denoted by $I$.
The complex gauge transformation group $\G^{\C}=C^\infty(X,\C^*)$ acts on $\mathscr{P}(E)=\mathscr{A}^{0,1}(E)\times\Omega^0(E)$ by 
\[
(\delta,\phi)\cdot f = (\delta\cdot f,f^{-1}\phi)\text{ for } (\delta,\phi)\in \mathscr{P}(E),\quad f\in\G^{\C},
\]
where $\delta\cdot f=f^{-1}\circ \delta\circ f=\delta +f^{-1}\bar{\partial} f$.
A pair $(\delta,\phi)$ with nonzero $\phi$ is called {\it simple}. 
Let $\mathscr{P}^s(E)$ be the space of simple pairs.
Then $\G^{\C}$ acts on $\mathscr{P}^s(E)$ freely.
We define the involution $I$ on $\G^{\C}$ by $I(f)=\overline{\iota^* f}$.
Then the $\G^{\C}$-action on $\mathscr{P}(E)$ is $I$-equivariant.

Let $\mathscr{H}(E)$ be the space of holomorphic pairs:
\[
\mathscr{H}(E)=\{(\delta,\phi)\in \mathscr{A}^{0,1}(E)\times\Omega^0(E)\,|\, \delta\circ\delta=0,\delta\phi=0\}.
\]
A pair $(\delta,\phi)\in\mathscr{H}(E)$ with non-zero $\phi$ is called a {\it holomorphic simple pair}.
Let $\mathscr{H}^s(E)$ be the space of holomorphic simple pairs.

We consider the $I$-invariant moduli space of holomorphic simple pairs:
\[
\mathscr{M}^s(E)^I = \mathscr{H}^s(E)^I/(\G^{\C})^I.
\]

The deformation complex for an $I$-invariant holomorphic simple pair $\mathfrak{p}=(\delta,\phi)$ is given by
\[
(\mathscr{C}_{\mathfrak{p}})^I = (\mathscr{C}^0)^I\overset{\mathscr{D}^0_{\mathfrak{p}}}{\to}(\mathscr{C}^1)^I\overset{\mathscr{D}^1_{\mathfrak{p}}}{\to}(\mathscr{C}^2)^I\overset{\mathscr{D}^2_{\mathfrak{p}}}{\to}(\mathscr{C}^3)^I,
\]
where 
\begin{gather}
\mathscr{C}^0 = \Omega^{0,0}(X), \quad \mathscr{C}^i = \Omega^{0,i}(X)\oplus \Omega^{0,i-1}(E)\ (i=1,2),\quad  \mathscr{C}^3 = \Omega^{0,2}(E),\\\label{eq:diff}
\mathscr{D}^i_{\mathfrak{p}}(\alpha,\sigma)=(\bar\partial\alpha, -\delta\sigma-\alpha\phi).
\end{gather}

The moduli space $\mathscr{M}^s(E)^I$ has a Kuranishi model as follows.
\begin{Proposition}[{\it Cf.} \cite{Teleman}, Proposition 8.2.10]
Let $H^i((\mathscr{C}_{\mathfrak{p}})^I)$, $\mathbb{H}^i((\mathscr{C}_{\mathfrak{p}})^I)$ be the cohomology group and harmonic space of the elliptic complex $(\mathscr{C}_{\mathfrak{p}})^I$.
There exists a neighborhood $U_{\mathfrak{p}}$ of $0\in \mathbb{H}^1 ((\mathscr{C}_{\mathfrak{p}})^I)$ and a smooth map 
\[
\mathfrak{t}_{\mathfrak{p}}\colon U_{\mathfrak{p}}\to \mathbb{H}^2((\mathscr{C}_{\mathfrak{p}})^I)
\]
such that a neighborhood of $\mathfrak{p}\in\mathscr{M}^s(E)^I$ is homeomorphic to $\mathfrak{t}_{\mathfrak{p}}^{-1}(0)$.
Furthermore, if $H^2((\mathscr{C}_{\mathfrak{p}})^I)=0$, then $\mathscr{M}^s(E)^I$ is a smooth manifold of dimension $\dim H^1((\mathscr{C}_{\mathfrak{p}})^I)$ near $[\mathfrak{p}]$, and the tangent space of $\mathscr{M}^s(E)^I$ at $[\mathfrak{p}]$ is identified with $H^1((\mathscr{C}_{\mathfrak{p}})^I)$.
\end{Proposition}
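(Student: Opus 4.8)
The statement is a Kuranishi-model description of the $I$-invariant moduli space $\mathscr{M}^s(E)^I$ at a point $\mathfrak{p}=(\delta,\phi)$, and the natural strategy is to \emph{descend} the corresponding statement for the full moduli space $\mathscr{M}^s(E)$ proved in Teleman \cite[Proposition 8.2.10]{Teleman}, keeping careful track of the $I$-action. First I would set up the local slice for the $(\G^{\C})^I$-action at $\mathfrak{p}$. Since $\mathfrak{p}$ is simple, the stabilizer in $\G^{\C}$ is trivial, hence so is its stabilizer in $(\G^{\C})^I$; thus one can choose a Coulomb-type slice $S_{\mathfrak{p}}\subset\mathscr{P}^s(E)$ transverse to the $\G^{\C}$-orbit, and because the slice equation and the orbit are $I$-equivariant, $S_{\mathfrak{p}}^I$ is a slice for the $(\G^{\C})^I$-action, with $T_{\mathfrak{p}}S_{\mathfrak{p}}^I\cong (\mathscr{C}^1)^I$ (or rather $\ker(\mathscr{D}^0_{\mathfrak{p}})^*$ restricted to the $I$-invariant part). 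Then I would write the holomorphicity-plus-$\delta$-closedness equations $(\delta\circ\delta,\ \delta\phi)=0$ as a map $F\colon S_{\mathfrak{p}}\to \mathscr{C}^2$, observe it is $I$-equivariant, and restrict to $F^I\colon S_{\mathfrak{p}}^I\to (\mathscr{C}^2)^I$, so that a neighborhood of $[\mathfrak{p}]$ in $\mathscr{M}^s(E)^I$ is $F^{-1}(0)\cap S_{\mathfrak{p}}^I = (F^I)^{-1}(0)$.

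\textbf{Key steps.} (1) Verify that taking $I$-invariants is exact on the relevant function spaces: since $I$ is an order-two isometric involution, each $L^2_k$-space splits as $V = V^I\oplus V^{-I}$ orthogonally, harmonic projection commutes with $I$, and the elliptic complex $(\mathscr{C}_{\mathfrak{p}},\mathscr{D}_{\mathfrak{p}})$ splits into its $\pm$ eigen-subcomplexes; in particular $(\mathscr{C}_{\mathfrak{p}})^I$ is again an elliptic complex (its symbol is that of $\bar\partial\oplus\bar\partial$ restricted to an $I$-invariant subbundle, still elliptic) with finite-dimensional cohomology $H^i((\mathscr{C}_{\mathfrak{p}})^I)$ and harmonic representatives $\mathbb{H}^i((\mathscr{C}_{\mathfrak{p}})^I) = \mathbb{H}^i(\mathscr{C}_{\mathfrak{p}})^I$. (2) Linearize $F^I$ at $\mathfrak{p}$: its differential is $\mathscr{D}^1_{\mathfrak{p}}$ restricted to $(\mathscr{C}^1)^I\cap\ker(\mathscr{D}^0_{\mathfrak{p}})^*$, whose kernel is $\mathbb{H}^1((\mathscr{C}_{\mathfrak{p}})^I)$ and whose cokernel is $\mathbb{H}^2((\mathscr{C}_{\mathfrak{p}})^I)$. (3) Apply the Kuranishi/implicit-function argument in the Hilbert-manifold setting exactly as in Teleman, but on the $I$-invariant slice: split $(\mathscr{C}^1)^I = \mathbb{H}^1((\mathscr{C}_{\mathfrak{p}})^I)\oplus (\im\mathscr{D}^0_{\mathfrak{p}})^I\oplus(\ldots)$, solve the surjective part of $F^I$ by the implicit function theorem, and obtain a finite-dimensional obstruction map $\mathfrak{t}_{\mathfrak{p}}\colon U_{\mathfrak{p}}\subset\mathbb{H}^1((\mathscr{C}_{\mathfrak{p}})^I)\to\mathbb{H}^2((\mathscr{C}_{\mathfrak{p}})^I)$ with $\mathfrak{t}_{\mathfrak{p}}^{-1}(0)$ homeomorphic to a neighborhood of $[\mathfrak{p}]$. (4) When $H^2((\mathscr{C}_{\mathfrak{p}})^I)=0$ the obstruction space vanishes, so $F^I$ is a submersion near $\mathfrak{p}$ and $(F^I)^{-1}(0)=S_{\mathfrak{p}}^I\cap\mathscr{H}^s(E)^I$ is a manifold of dimension $\dim\mathbb{H}^1((\mathscr{C}_{\mathfrak{p}})^I)=\dim H^1((\mathscr{C}_{\mathfrak{p}})^I)$, with tangent space $H^1((\mathscr{C}_{\mathfrak{p}})^I)$ by construction.

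\textbf{Main obstacle.} The delicate point is not any single estimate but making the ``restrict to $I$-invariants'' step rigorous at the level of slices: one must check that the $I$-equivariant Coulomb gauge-fixing really does identify a neighborhood of $[\mathfrak{p}]$ in the quotient $\mathscr{H}^s(E)^I/(\G^{\C})^I$ with $(F^I)^{-1}(0)$, i.e.\ that every $I$-invariant pair near $\mathfrak{p}$ can be moved into the $I$-invariant slice by an \emph{$I$-invariant} complex gauge transformation, and uniquely so. This is where one uses that $I$ acts on $\G^{\C}$ by $f\mapsto\overline{\iota^*f}$ with $(\G^{\C})^I$ still acting freely on simple pairs, together with the observation that the linearized gauge-fixing operator $(\mathscr{D}^0_{\mathfrak{p}})^*\mathscr{D}^0_{\mathfrak{p}}$ preserves the $I$-splitting and is invertible on $(\im\mathscr{D}^0_{\mathfrak{p}})^I$; an $I$-equivariant implicit function theorem then produces the desired $I$-invariant gauge transformation. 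Once this slice statement is in place, the rest is the verbatim $I$-invariant shadow of Teleman's Proposition 8.2.10, and I would simply indicate that all constructions there are $I$-equivariant and pass to fixed points.
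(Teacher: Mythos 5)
Your proposal is correct and follows the same route the paper has in mind: the paper's own proof consists only of the single sentence ``The proof is standard,'' with a pointer to Teleman's Proposition 8.2.10, and your argument is precisely the $I$-invariant shadow of that standard Kuranishi construction, consistent with the paper's stated guiding principle of ``consider in the upstairs and take the $I$-invariant part.'' Your careful attention to the slice-gauge-fixing step (that nearby $I$-invariant pairs can be brought into the $I$-invariant Coulomb slice by $I$-invariant complex gauge transformations) is exactly the point that makes the descent rigorous, and it is the right thing to flag.
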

The proof is standard.

%
%
\subsection{$I$-invariant divisors}\label{subsec:I-div}
%
%
(A reference of this subsection is \cite{silhol}, \Romnum{1}.4.)
A Weil divisor is a formal linear combination $\sum_{i}n_i D_i$ of irreducible analytic hypersurfaces.
Define the $I$-action on divisors by $I\cdot D=\sum_i n_i\iota(D_i)$.
We call a divisor $D$ {\it $I$-invariant} if $D=I\cdot D$.
We will mainly consider effective divisors, i.e.,  $D=\sum_in_iD_i$ with $n_i\geq 0$.

When $D$ is considered as a Cartier divisor, the $I$-action can be written as follows.
For an open subset $U\subset X$ and a holomorphic function $f\in\mathscr{O}_X(U)$, define $I\cdot f\in\mathscr{O}_X(\iota (U))$  by $(I\cdot f)(x)=\overline{f(\iota x)}$.
Let $\mathscr{S}$ be the set of pairs $(U_\lambda,\lambda)$ where $U_\lambda$ is an open set and $\lambda\in\mathscr{O}_X(U_\lambda)$.
Then define the $I$-action on $\mathscr{S}$ by
\[
I\cdot (U_\lambda,\lambda) = (\iota(U_\lambda),I\cdot\lambda)
\]
An effective Cartier divisor  is given as a subset $\mathscr{F}\subset\mathscr{S}$ whose elements $(U_\lambda,\lambda)$ satisfy the following:
\begin{enumerate}
\item $\lambda$ is not identically zero.
\item $\bigcup_{\lambda\in\mathscr{F}}U_\lambda =X$.
\item For every $\lambda,\mu\in\mathscr{F}$, there exists $g_{\lambda,\mu}\in\mathscr{O}_X^*(U_\lambda\cap U_\mu)$ such that $\lambda = g_{\lambda,\mu}\mu$.
\end{enumerate}
We will take a maximal one of such systems for $\mathscr{F}$.
The effective Weil divisor corresponding to an effective Cartier divisor is obtained by considering $\lambda$ as  local defining equations.
Let 
\[
 I\cdot\mathscr{F} = \{I\cdot(U_\lambda,\lambda)\,|\, (U_\lambda,\lambda)\in\mathscr{F}\}.
\]
Then $I\cdot D$ corresponds to $I\cdot\mathscr{F}$.
Note that $g_{I\lambda,I\mu}=Ig_{\lambda,\mu}$.
If $D$ is $I$-invariant, then we can take $\mathscr{F}$ corresponding to $D$ such that $\mathscr{F}=I\cdot\mathscr{F}$.

The system of cocycles $\{g_{\lambda,\mu}\}$ and local functions $\{\lambda\}$ defines a holomorphic line bundle $\mathscr{L}$ with a holomorphic section $\phi$ as follows.
\begin{gather*}
\mathscr{L}=\left(\bigcup_{\lambda\in\mathscr{F}}\{\lambda\}\times U_\lambda\times \C\right)/\sim,\\
\{\mu\}\times(U_\lambda\cap U_\mu)\times\C\ni(\mu,u,\zeta)\sim(\lambda,u,g_{\lambda,\mu}\zeta)\in \{\lambda\}\times(U_\lambda\cap U_\mu)\times\C,\\
\phi(u)=[(\lambda,u,\lambda(u))]\mod \sim\quad (u\in U_\lambda).
\end{gather*}
Then the corresponding divisor $D$ is $D=Z(\phi)$.
When $(\mathscr{L}_D,\phi_D)$ is associated with $D$ (or $\mathscr{F}$), note that the line bundle with section associated with $I\cdot D$ (or $I\cdot\mathscr{F}$) is 
\[
(\mathscr{L}_{I\cdot D},\phi_{I\cdot D}) = (\overline{\iota^*\mathscr{L}_D},\overline{\iota^*\phi_D}).
\]
If $D$ is $I$-invariant, then an anti-linear involution $I$ on $\mathscr{L}_D$ covering $\iota$ is naturally defined by
\[
I\cdot[(\lambda,u,\zeta)] = [(I\lambda,\iota(u),\zeta)].
\]
%
%
\subsection{$I$-equivariant sheaves}
%
%
Under the $I$-action, the structure sheaf $\mathscr{O}_X$ is an $I$-equivariant sheaf in the sense of \cite{Groth,Stieg}, i.e., the sheaf projection $\mathscr{O}_X\to X$ is $I$-equivariant. 
If $D$ is $I$-invariant, then $\mathscr{O}_X(D)$ and $\mathscr{O}_D(D) = \mathscr{O}_X(D)/\mathscr{O}_X$ are also $I$-equivariant.
For an $I$-equivariant sheaf $\mathscr{E}$, the equivariant sheaf cohomology $H^p(X;I,\mathscr{E})$ is defined:
For an $I$-invariant open set $U\subset X$, let $\Gamma^I(U;\mathcal{E})$ be the module  of $I$-invariant sections.
Take an injective resolution $\mathcal{J}^*(\mathcal{E})$ of $\mathcal{E}$ in the category of $I$-equivariant sheaves. 
Then $H^p(X;I,\mathscr{E})$ is defined by
\[
H^p(X;I,\mathscr{E}) = H^p(\Gamma^I(X; \mathcal{J}^*(\mathcal{E})).
\]
The equivariant direct image $\pi^I\mathcal{E}$ of $\mathcal{E}$ is the sheaf on $\X=X/\iota$ which is generated by the presheaf,
\[
\hat{U}\mapsto \Gamma^I(\pi^{-1}(\hat{U});\mathcal{E}),\quad \hat{U}\subset \X =X/\iota\quad \text{open}.
\] 
In general, $\pi^G$ is a left exact functor for $G$-sheaves. 
However our case is much simple. 
Since $I$ covers the free involution $\iota$ on $X$, $\pi^I$ is an exact functor. 
That is, for an exact sequence of $I$-sheaves  on $X$,
\[
0\to\mathcal{E}\to\mathcal{F}\to\mathcal{H}\to 0,
\]
we have an exact sequence of sheaves on $\X$,
\[
0\to\pi^I\mathcal{E}\to \pi^I\mathcal{F}\to \pi^I\mathcal{H}\to 0.
\]
In particular,
\[
\pi^I\mathcal{F}/\pi^I\mathcal{E} = \pi^I(\mathcal{F}/\mathcal{E}).
\]
The fact that $I$ covers the free involution $\iota$ on $X$ also implies that 
\[
H^i(X;I,\mathscr{E}) = H^i(X/\iota;\pi^I(\mathscr{E})).
\]
(\cite{Groth}, p.204, Corollaire; \cite{Stieg}, Corollary 5.6.)

There is an $I$-equivariant exact sequence.
\[
\begin{CD}
0@>>>\tilde{\Z}@>{i}>>\mathscr{O}_X@>{\exp2\pi}>>\mathscr{O}^*_X@>>> 0,
\end{CD}
\] 
where $\tilde\Z$ is the constant sheaf on which $I$ acts via multiplication of $-1$.
This induces the sequence
\[
0\to H^1(X;I,\tilde\Z)\to H^1(X;I,\mathscr{O}_X)\to H^1(X;I,\mathscr{O}^*_X) \overset{\tilde{c}_1}{\to} H^2(X;I,\tilde\Z)\to\cdots.
\]
Note that $H^i(X;I,\tilde\Z)\cong H^i(\X;\ell)$. 
Let $\mathrm{NS}^I(X)=\Ima \tilde{c}_1$.
For $e\in \mathrm{NS}^I(X)$, let $\mathscr{D}(e)$ be the set of effective divisors representing $e$.
\begin{Proposition}[\cite{Teleman}, Proposition 8.2.13]
Let $e=\tilde{c}_1(E)$. 
\begin{enumerate}
\item The map $(\delta,\phi)\mapsto Z(\phi)$ induces a bijection $\mathscr{M}^s(E)^I\overset{\cong}{\to}\mathscr{D}(e)^I$.
\item 
\[
H^{0}((\mathscr{C}_{\mathfrak{p}})^I)=0,\quad H^{i}((\mathscr{C}_{\mathfrak{p}})^I)\cong H^{i-1}(X;I,\mathscr{O}_D(D))=H^{i-1}(\X;\pi^I\mathscr{O}_D(D)),
\]
for each positive integer $i$.
\end{enumerate}
\end{Proposition}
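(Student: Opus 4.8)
The plan is to follow Teleman's strategy and simply ``restrict everything to the $I$-invariant part,'' so the proof is an equivariant refinement of \cite[Proposition 8.2.13]{Teleman}. For part (1), I would construct the inverse of the map $[\mathfrak{p}]=[(\delta,\phi)]\mapsto Z(\phi)$ directly: given an $I$-invariant effective divisor $D$ representing $e$, the construction in \subsecref{subsec:I-div} produces a holomorphic line bundle $\mathscr{L}_D$ with section $\phi_D$ and, since $D$ is $I$-invariant, a natural antilinear involution $I$ on $\mathscr{L}_D$ covering $\iota$. Choosing an $I$-compatible identification of the underlying $C^\infty$ bundle with $(E,h)$ (possible because $\iota^*E\cong\bar E$ and the Picard-type obstruction lives in $H^1(X;I,\mathscr{O}_X)$, which is exactly what $\tilde c_1$ controls), the holomorphic structure on $\mathscr{L}_D$ becomes a semiconnection $\delta$ on $E$ with $\delta\circ\delta=0$, and $\phi_D$ becomes a holomorphic section; simplicity is automatic since $\phi_D\neq0$. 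One then checks this is well-defined up to $(\G^{\C})^I$ and inverse to $Z(\cdot)$, exactly as in the non-equivariant case, using that two holomorphic line bundles with section representing the same divisor differ by a nowhere-vanishing holomorphic function, and that $I$-invariance forces this function to be taken in $(\G^{\C})^I$.

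For part (2), I would identify the deformation complex $(\mathscr{C}_{\mathfrak{p}})^I$ with an equivariant Dolbeault-type resolution. The key point, already isolated in the ``$I$-equivariant sheaves'' subsection, is the short exact sequence of $I$-equivariant sheaves $0\to\mathscr{O}_X\to\mathscr{O}_X(D)\to\mathscr{O}_D(D)\to0$ together with the fact that, because $\iota$ is free, $\pi^I$ is exact and $H^i(X;I,\mathscr{E})=H^i(\X;\pi^I\mathscr{E})$. Concretely, the differential $\mathscr{D}^i_{\mathfrak{p}}(\alpha,\sigma)=(\bar\partial\alpha,-\delta\sigma-\alpha\phi)$ from \eqref{eq:diff} is, up to the usual algebraic manipulation, the Dolbeault representative of the complex computing the hypercohomology of the two-term complex $[\mathscr{O}_X\xrightarrow{\cdot\phi}\mathscr{O}_X(D)]$, which is quasi-isomorphic to $\mathscr{O}_D(D)[-1]$ via the map $\phi$; taking $I$-invariants throughout (legitimate since all sheaves and maps are $I$-equivariant and we use $I$-invariant Dolbeault forms, which resolve the $I$-equivariant sheaves) gives $H^0((\mathscr{C}_{\mathfrak{p}})^I)=0$ and $H^i((\mathscr{C}_{\mathfrak{p}})^I)\cong H^{i-1}(X;I,\mathscr{O}_D(D))=H^{i-1}(\X;\pi^I\mathscr{O}_D(D))$ for $i\geq1$. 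The vanishing of $H^0$ reflects that an infinitesimal complex gauge transformation fixing a simple pair must be zero, combined with $I$-invariance.

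The main obstacle I expect is not any single hard estimate but the bookkeeping needed to make the equivariant identifications genuinely functorial: one must verify that the $I$-invariant Dolbeault complex $(\Omega^{0,\bullet}(X))^I$ (resp. $(\Omega^{0,\bullet}(E))^I$) really is a resolution of the $I$-equivariant sheaf $\mathscr{O}_X$ (resp. of the relevant $I$-equivariant line bundle sheaf) in a way compatible with the exactness of $\pi^I$, so that taking $I$-invariants commutes with passing to cohomology. This is where the freeness of $\iota$ is essential and where the cited results of Grothendieck and Stieglitz (\cite{Groth}, \cite{Stieg}) do the work: since $\pi\colon X\to\X$ is an honest double cover, $I$-invariant forms on $X$ are precisely sections over $\X$ of the corresponding twisted bundles, and the $\bar\partial$-Poincar\'e lemma descends. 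Once this is in place, both statements follow by copying Teleman's argument verbatim with $\Gamma$ replaced by $\Gamma^I$ everywhere; accordingly, as the paper says, ``the proof is standard.''

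\begin{proof}[Sketch of proof]
We imitate the proof of \cite[Proposition 8.2.13]{Teleman}, taking $I$-invariant parts throughout.

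(1) The map $(\delta,\phi)\mapsto Z(\phi)$ is $(\G^{\C})^I$-invariant and sends an $I$-invariant holomorphic simple pair to an $I$-invariant effective divisor representing $e=\tilde c_1(E)$, so it descends to a map $\mathscr{M}^s(E)^I\to\mathscr{D}(e)^I$. Conversely, given $D\in\mathscr{D}(e)^I$, the construction of \subsecref{subsec:I-div} yields $(\mathscr{L}_D,\phi_D)$ together with an antilinear involution $I$ on $\mathscr{L}_D$ covering $\iota$; since $\tilde c_1(\mathscr{L}_D)=e=\tilde c_1(E)$, there is an $I$-equivariant $C^\infty$ isomorphism $\mathscr{L}_D\cong E$, unique up to $(\G^{\C})^I$, transporting the holomorphic structure to a semiconnection $\delta$ on $E$ with $\delta\circ\delta=0$ and $\phi_D$ to a nonzero holomorphic section. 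This gives a well-defined inverse.

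(2) Consider the $I$-equivariant exact sequence $0\to\mathscr{O}_X\xrightarrow{\cdot\phi}\mathscr{O}_X(D)\to\mathscr{O}_D(D)\to0$. Because $\iota$ is free, $\pi^I$ is exact and $H^i(X;I,\mathscr{E})=H^i(\X;\pi^I\mathscr{E})$ for every $I$-equivariant sheaf $\mathscr{E}$. Resolving $\mathscr{O}_X$ and $\mathscr{O}_X(D)$ by $I$-invariant Dolbeault forms and using \eqref{eq:diff}, the complex $(\mathscr{C}_{\mathfrak{p}})^I$ computes the $I$-equivariant hypercohomology of $[\mathscr{O}_X\xrightarrow{\cdot\phi}\mathscr{O}_X(D)]$, which is quasi-isomorphic to $\mathscr{O}_D(D)[-1]$. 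Hence $H^0((\mathscr{C}_{\mathfrak{p}})^I)=0$ and, for $i\geq1$,
\[
H^i((\mathscr{C}_{\mathfrak{p}})^I)\cong H^{i-1}(X;I,\mathscr{O}_D(D))=H^{i-1}(\X;\pi^I\mathscr{O}_D(D)).
\]
This completes the sketch.
\end{proof}
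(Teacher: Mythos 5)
Your proposal is correct and follows essentially the same route as the paper: part (1) is the $I$-equivariant divisor--line-bundle correspondence from \subsecref{subsec:I-div}, and part (2) identifies $(\mathscr{C}_{\mathfrak{p}})^I$ with a Dolbeault resolution of $\pi^I\mathscr{O}_D(D)$, using exactness of $\pi^I$ for the free involution. The only cosmetic difference is that you package the step as hypercohomology of the two-term complex $[\mathscr{O}_X\xrightarrow{\cdot\phi}\mathscr{O}_X(D)]\simeq\mathscr{O}_D(D)[-1]$, whereas the paper writes out the induced resolution of $\pi^I\mathscr{O}_D(D)$ by quotients of the $\pi^I\mathcal{C}^i$ explicitly; the content is the same.
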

\begin{proof}
With \subsecref{subsec:I-div} understood, (1) is easy.
The proof of (2) is parallel to that of \cite[Proposition 8.2.13]{Teleman}.
Let $\mathcal{A}^{p,q}(X)$ and $\mathcal{A}^{p,q}(E)$ be the sheaves of $C^\infty$-sections of $\Lambda^{p,q}$ and $\Lambda^{p,q}(E)$.
Let 
\[
\mathcal{C}^0:=\mathcal{A}^0(X),\quad \mathcal{C}^i:=\mathcal{A}^{0,i}(X)\oplus \mathcal{A}^{0,i-1}(E), (i=1,2),\quad \mathcal{C}^3:=\mathcal{A}^{0,2}(E).
\]
Then the $I$-action makes $\mathcal{C}^i$ $I$-equivariant sheaves.
The operators $\mathscr{D}_{\mathfrak{p}}^i$ in \eqref{eq:diff} define a sequence of $I$-equivariant sheaves:
\[
0\to\mathcal{C}^0\overset{\delta^0_{\mathfrak{p}}}{\to}\mathcal{C}^1\overset{\delta^1_{\mathfrak{p}}}{\to}\mathcal{C}^2\overset{\delta^2_{\mathfrak{p}}}{\to}\mathcal{C}^3\to 0.
\]
This induces the sequence of sheaves over $\X=X/\iota$:
\begin{equation}\label{eq:seq}
0\to\pi^I\mathcal{C}^0\overset{\hat{\delta}^0_{\mathfrak{p}}}{\to}\pi^I\mathcal{C}^1\overset{\hat{\delta}^1_{\mathfrak{p}}}{\to}\pi^I\mathcal{C}^2\overset{\hat{\delta}^2_{\mathfrak{p}}}{\to}\pi^I\mathcal{C}^3\to 0.
\end{equation}
Since $\iota$ is free, it can be seen from the $\bar{\partial}$-Poincar\'{e} lemma that the sequence \eqref{eq:seq} is exact unless $i\neq 1$.
Furthermore, the following map is an isomorphism:
\[
\pi^I\mathscr{O}_X(\mathscr{E}_\delta)/\phi\pi^I\mathscr{O}_X\to \ker \hat{\delta}^1_{\mathfrak{p}} / \im \hat{\delta}^0_{\mathfrak{p}}, \quad [\lambda]\mapsto (0,\lambda).
\]
For $i>0$,
\[
H^i(\pi^I\mathcal{C}^1/\im \hat{\delta}^0_{\mathfrak{p}}) =0. 
\]
Then we obtain a resolution of $\pi^I\mathscr{O}_X(\mathscr{E}_\delta)/\phi\pi^I\mathscr{O}_X = \pi^I\mathscr{O}_D(D)$ as follows:
\[
0\to \pi^I\mathscr{O}_X(\mathscr{E}_\delta)/\phi\pi^I\mathscr{O}_X = \ker \hat{\delta}^1_{\mathfrak{p}} / \im \hat{\delta}^0_{\mathfrak{p}}\to \pi^I\mathcal{C}^1/\im \hat{\delta}^0_{\mathfrak{p}}\overset{\hat{\delta}^1_{\mathfrak{p}}}{\to}\pi^I\mathcal{C}^2\overset{\hat{\delta}^2_{\mathfrak{p}}}{\to}\pi^I\mathcal{C}^3\to 0.
\]
Since $H^1(\im\hat{\delta}^0_{\mathfrak{p}}) \cong H^1(\pi^I\mathcal{C}^0)$, we have
\[
H^0(\pi^I\mathcal{C}^1)/\mathscr{D}^0_{\mathfrak{p}}(H^0(\pi^I\mathcal{C}^0)) \cong H^0(\pi^I\mathcal{C}^1/\im\hat\delta^0_{\mathfrak{p}}).
\]
Then we obtain
\begin{align*}
H^0(\pi^I\mathscr{O}_D(D))\cong &\ker\left(H^0(\pi^I\mathcal{C}^1/\im\hat\delta^0_{\mathfrak{p}})\overset{\mathscr{D}_{\mathfrak{p}}^1}{\to}H^0(\pi^I\mathcal{C}^2) = (\mathscr{C}^2)^I\right) = H^1\left((\mathscr{C}_{\mathfrak{p}})^I\right),\\
H^i(\pi^I\mathscr{O}_D(D)) \cong & \ker \mathscr{D}_\mathfrak{p}^{i+1}/\im\mathscr{D}_\mathfrak{p}^{i}=H^{i+1}\left((\mathscr{C}_{\mathfrak{p}})^I\right).
\end{align*}
\end{proof}
\begin{Corollary}
For $D\in\mathscr{D}(e)^I$, if   $H^1(X;I,\mathscr{O}_D(D)) =H^1(\X;\pi^I\mathscr{O}_D(D))=0$, then $\mathscr{D}(e)^I$  is smooth at $D$, and the tangent space of $\mathscr{D}(e)^I$ at $D$ is identified with $H^0(X;I,\mathscr{O}_D(D)) =H^0(\X;\pi^I\mathscr{O}_D(D))$.
\end{Corollary}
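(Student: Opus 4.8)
The plan is to read off this Corollary directly from the Kuranishi model for $\mathscr{M}^s(E)^I$ (the Proposition immediately preceding it) combined with the cohomological identification of the groups $H^i((\mathscr{C}_{\mathfrak{p}})^I)$ established in Proposition 8.2.13$^I$. First I would fix $D\in\mathscr{D}(e)^I$ and let $\mathfrak{p}=(\delta,\phi)$ be the corresponding $I$-invariant holomorphic simple pair, so that under the bijection $\mathscr{M}^s(E)^I\xrightarrow{\cong}\mathscr{D}(e)^I$ of part (1) of that Proposition, a neighborhood of $D$ in $\mathscr{D}(e)^I$ is identified with a neighborhood of $[\mathfrak{p}]$ in $\mathscr{M}^s(E)^I$. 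By part (2) of the same Proposition we have the isomorphisms
\[
H^1\big((\mathscr{C}_{\mathfrak{p}})^I\big)\cong H^0(X;I,\mathscr{O}_D(D))=H^0(\X;\pi^I\mathscr{O}_D(D)),\qquad
H^2\big((\mathscr{C}_{\mathfrak{p}})^I\big)\cong H^1(X;I,\mathscr{O}_D(D))=H^1(\X;\pi^I\mathscr{O}_D(D)).
\]

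Next I would feed the hypothesis into the Kuranishi description. The Proposition ($\mathit{Cf.}$ \cite{Teleman}, Proposition 8.2.10) gives a neighborhood $U_{\mathfrak{p}}$ of $0$ in $\mathbb{H}^1((\mathscr{C}_{\mathfrak{p}})^I)$ and a smooth obstruction map $\mathfrak{t}_{\mathfrak{p}}\colon U_{\mathfrak{p}}\to\mathbb{H}^2((\mathscr{C}_{\mathfrak{p}})^I)$ whose zero set is a neighborhood of $[\mathfrak{p}]$ in $\mathscr{M}^s(E)^I$, and it asserts that when $H^2((\mathscr{C}_{\mathfrak{p}})^I)=0$ the moduli space is a smooth manifold near $[\mathfrak{p}]$ of dimension $\dim H^1((\mathscr{C}_{\mathfrak{p}})^I)$, with tangent space $H^1((\mathscr{C}_{\mathfrak{p}})^I)$ at $[\mathfrak{p}]$. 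The assumption $H^1(X;I,\mathscr{O}_D(D))=H^1(\X;\pi^I\mathscr{O}_D(D))=0$ is, via the second displayed isomorphism above, exactly the vanishing $H^2((\mathscr{C}_{\mathfrak{p}})^I)=0$ (equivalently $\mathbb{H}^2((\mathscr{C}_{\mathfrak{p}})^I)=0$, so that $\mathfrak{t}_{\mathfrak{p}}$ is the zero map and $\mathfrak{t}_{\mathfrak{p}}^{-1}(0)=U_{\mathfrak{p}}$ is already smooth). Hence $\mathscr{M}^s(E)^I$, and therefore $\mathscr{D}(e)^I$, is smooth at $D$. For the tangent space, I would transport the identification $T_{[\mathfrak{p}]}\mathscr{M}^s(E)^I\cong H^1((\mathscr{C}_{\mathfrak{p}})^I)$ through the bijection of part (1) and then through the first displayed isomorphism, obtaining $T_D\mathscr{D}(e)^I\cong H^0(X;I,\mathscr{O}_D(D))=H^0(\X;\pi^I\mathscr{O}_D(D))$.

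The only point requiring a word of care — and the natural candidate for the main obstacle — is checking that the bijection $\mathscr{M}^s(E)^I\xrightarrow{\cong}\mathscr{D}(e)^I$ is not merely a set-theoretic bijection but a diffeomorphism onto its image near $D$, i.e.\ that it carries the local smooth (Kuranishi) structure on one side to a smooth structure on the other compatibly with tangent spaces; this is the same local-model comparison carried out in \cite{Teleman} in the non-equivariant setting, and since $\iota$ is free the $I$-invariant version is formally identical, with $\pi^I$ exact and $H^i(X;I,-)=H^i(\X;\pi^I(-))$ as recorded above. Everything else is a direct substitution of the hypothesis into the already-established Kuranishi statement, so the proof is short: assemble the two cited Propositions, rewrite the hypothesis as $H^2((\mathscr{C}_{\mathfrak{p}})^I)=0$, and conclude.
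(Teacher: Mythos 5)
Your proof is correct and is exactly the argument the paper intends: the Corollary is a direct combination of the Kuranishi-model Proposition (smoothness when $H^2((\mathscr{C}_{\mathfrak{p}})^I)=0$, with tangent space $H^1((\mathscr{C}_{\mathfrak{p}})^I)$) and the identifications $H^{i}((\mathscr{C}_{\mathfrak{p}})^I)\cong H^{i-1}(X;I,\mathscr{O}_D(D))$ from part (2) of the preceding Proposition, transported to $\mathscr{D}(e)^I$ via the bijection in part (1). The paper gives no separate proof because the deduction is immediate, and your write-up matches it step for step, including the cautionary remark about compatibility of the bijection with the local smooth structure.
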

We call the following the Zariski tangent space of $\mathscr{D}(e)^I$ at $D$:
\[
T_D(\mathscr{D}(e)^I):= H^0(X;I,\mathscr{O}_D(D)) =H^0(\X;\pi^I\mathscr{O}_D(D)).
\]
%
%
\subsection{Correspondence}\label{subsec:corres}
%
%
Define the map $\tilde{\mathcal{J}}\colon \mathcal{C}^*(E)_k\to \mathscr{P}^s(E)_k$ by 
\[
\tilde{\mathcal{J}}(B,\phi) =(\bar{\partial}_B,\phi). 
\]
Then the restriction of $\tilde{\mathcal{J}}$ to the $I$-invariant part $\mathcal{C}^*(E)^I_k$ induces a submersion
\[
\mathcal{J}^\prime\colon(\mathcal{B}^*_k)^I=\mathcal{C}^*(E)^I_k/\G^I_{k+1}\to(\mathscr{B}_k^s)^I:=\mathscr{P}^s(E)_k^I/(\G^{\C}_{k+1})^I.
\]

The goal of this subsection is the next proposition.
\begin{Theorem}\label{thm:corr}
If $\Xi>0$, then the map $\mathcal{J}^\prime$ induces a homeomorphism
\[
\mathcal{J}\colon\mathcal{V}^*_t(E)^I\overset{\cong}{\to}\mathscr{M}^s(E)^I.
\]
\end{Theorem}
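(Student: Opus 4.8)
The plan is to follow the by-now-classical Kobayashi--Hitchin strategy of Bradlow, García-Prada, and especially Teleman \cite{Teleman}, carrying out everything $I$-equivariantly. The map $\mathcal{J}$ is the descent of $\mathcal{J}'$: a $t$-vortex $(B,\phi)$ satisfies $\bar\partial_B\phi=0$ and $F_B^{0,2}=0$, so $\tilde{\mathcal J}(B,\phi)=(\bar\partial_B,\phi)$ is a holomorphic pair, and it is simple since $\phi\neq0$ (which holds because $\Xi>0$ forces irreducibility by \eqref{eq:nonneg}). The $I$-invariance is preserved because $\tilde{\mathcal J}$ is $I$-equivariant by construction, and $\mathcal{J}'$ intertwines the $\G^I$- and $(\G^\C)^I$-actions. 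So one gets a well-defined continuous map $\mathcal{J}\colon\mathcal{V}^*_t(E)^I\to\mathscr{M}^s(E)^I$; the content is that it is a bijection with continuous inverse.

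First I would prove surjectivity and injectivity together by the usual ``unique gauge-fixing'' argument. Given an $I$-invariant holomorphic simple pair $\mathfrak{p}=(\delta,\phi)$, write $\delta=\bar\partial_{B_0}$ for some $I$-invariant Hermitian connection $B_0$; acting by an $I$-invariant real gauge transformation keeps us in the same $(\mathscr B^s_k)^I$-class, while acting by $f=e^{u}$ with $u\colon X\to\R$ changes the metric term. The key point is that within the $(\G^\C)^I$-orbit of $\mathfrak{p}$ there is, modulo $\G^I$, a \emph{unique} point lying in $\mathcal{V}^*_t(E)^I$: this reduces to solving the scalar equation
\[
i\Lambda_g F_{B_0} + \tfrac12 e^{-2u}|\phi|^2 - t + \Delta_g u = 0
\]
for an $I$-invariant (i.e. $\iota$-invariant) function $u$. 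Since $t$ is $\iota$-invariant and $B_0$, $\phi$ are $I$-invariant, the left-hand side is $\iota$-equivariant, so the equation descends to $\hat X=X/\iota$; existence and uniqueness of the solution is Kazdan--Warner theory exactly as in the non-equivariant vortex case (Bradlow / Teleman \cite{Teleman}), the only new input being that we solve it on the closed manifold $\hat X$ with $\iota$-invariant data, which is automatic once one notes the solution to a uniquely-solvable $\iota$-equivariant equation is $\iota$-invariant by uniqueness. This gives a set-theoretic inverse $\mathcal{J}^{-1}$.

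Next I would upgrade this to a homeomorphism. Continuity of $\mathcal{J}$ is immediate. For continuity of $\mathcal{J}^{-1}$ I would use the standard implicit-function/elliptic-regularity package: the assignment $\mathfrak{p}\mapsto u(\mathfrak p)$ solving the displayed equation depends smoothly (in the appropriate $L^2_k$ topologies) on $\mathfrak{p}$, because the linearization $\Delta_g - e^{-2u}|\phi|^2$ is an invertible self-adjoint elliptic operator on the space of $\iota$-invariant functions on $\hat X$ (its kernel is trivial since $e^{-2u}|\phi|^2\geq0$ is not identically zero by $\Xi>0$); then one applies the implicit function theorem on the Hilbert-manifold completions $(\mathscr B^s_k)^I$ introduced in \subsecref{subsec:corres}. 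Descending along the free proper $(\G^\C_{k+1})^I$-action then yields a continuous, indeed smooth, inverse.

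I expect the main obstacle to be purely bookkeeping rather than analytic: one must check carefully that the $I$-invariant slices, the $\iota$-invariant function spaces, and the quotients by $\G^I$ versus $(\G^\C)^I$ all match up so that the Kazdan--Warner solution obtained upstairs is genuinely $I$-invariant and descends correctly — in particular that ``$I$-invariant connection modulo $\G^I$'' and ``$I$-invariant semiconnection modulo $(\G^\C)^I$'' are related by exactly the $e^u$-ambiguity with $u$ $\iota$-invariant, with no extra components of $\G^\C/\G$ to worry about. Once the equivariant formalism of \subsecref{subsec:corres} and the exactness of $\pi^I$ are in hand, the analysis is literally Teleman's \cite{Teleman} restricted to the fixed-point set of $I$, so I would present the proof as ``apply \cite[the relevant sections]{Teleman} $I$-equivariantly'', spelling out only the scalar PDE step and the invertibility of its linearization.
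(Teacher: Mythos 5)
Your proof takes essentially the same route as the paper: bijectivity of $\mathcal{J}$ is reduced to solving an $I$-invariant Kazdan--Warner equation (which the paper descends to $\hat X$; you solve it $\iota$-equivariantly and invoke uniqueness to get $\iota$-invariance, an equivalent step), and the homeomorphism claim is handled by a local elliptic argument --- the paper packages this as Proposition~\ref{prop:lochomeo} via a tangent-space identification of the vortex slice with the $(\G^\C)^I$-slice, while you instead invoke the implicit function theorem on the solution map for the scalar PDE, which amounts to the same elliptic bookkeeping. One small caution: with the geometers' nonnegative Laplacian and your substitution $u=-\psi$, the equation reads $-\Delta_g u + \frac12 e^{-2u}|\phi|^2 = \theta$ with linearization $-\Delta_g - e^{-2u}|\phi|^2$ (negative definite, hence invertible); your displayed operator $\Delta_g - e^{-2u}|\phi|^2$ has the wrong sign unless you mean the analysts' Laplacian, though the justification you give (trivial kernel because the potential $\geq 0$ is not $\equiv 0$) is exactly the argument for the correct sign, so this is a notational slip rather than a conceptual gap.
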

For the proof, we need some preparation.
Define $\tilde{\mu}_t\colon \mathcal{C}^*_k\to \Omega^0(E)_{k-1}$ by the left hand side of the third equation of \eqref{eq:vortex} as 
\[
\tilde{\mu}_t(B,\phi)=  i\Lambda_gF_B + \frac12|\phi|^2 -t.
\]
The restriction 
of $\tilde{\mu}_t$ to the $I$-invariant part $(\mathcal{C}^*_k)^I$
induces the map $\mu_t^I\colon(\mathcal{B}_k^*)^I\to\Omega^0(E)_{k-1}^I$.
Let $Z(\mu_t^I)=(\mu^{I}_t)^{-1}(0)$.
For $v=(B,\phi)\in(\mathcal{C}^*_k)^I$, by using the K\"{a}hler identities $\partial^*=i[\Lambda,\bar{\partial}]$, $\bar{\partial}^*=-i[\Lambda,\partial]$, 
we have
\begin{align*}
T_{[v]}Z(\mu_t^I) =& T_{[v]}(\mathcal{B}^*_k)^I\cap \ker d\tilde{\mu}_t\\
=& \{ (\dot{B},\dot{\phi})\,|\, d^*\dot{B} - i\Ima (\dot{\phi}\bar{\phi})=0, -i\Lambda_gd\dot{B}-\mathrm{Re}(\dot{\phi}\bar{\phi})=0 \}^I\\
=&\{(\dot{B},\dot{\phi})\,|\,2\bar{\partial}^*\dot{B}^{0,1}-\dot{\phi}\bar{\phi} = 0\}^I
\end{align*}
Note that the last space can be identified with the $L^2$-orthogonal complement of the tangent space of the orbit $\mathfrak{p}\cdot(\G^{\C}_{k+1})^I$ in $T_{\mathfrak{p}}(\mathscr{P}_k^s)^I$ where $\mathfrak{p}=\tilde{\mathcal{J}}(v)\in(\mathscr{P}^s)^I$.
From this, we obtain the following: 
\begin{Proposition}\label{prop:lochomeo}
The map $\mathcal{J}^\prime|_{Z(\mu_t^I)}\colon Z(\mu_t^I)\to(\mathscr{B}_k^s)^I$  is a local homeomorphism.
\end{Proposition}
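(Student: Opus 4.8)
The statement to prove is \propref{prop:lochomeo}: that $\mathcal{J}^\prime|_{Z(\mu_t^I)}\colon Z(\mu_t^I)\to(\mathscr{B}_k^s)^I$ is a local homeomorphism. The strategy is the standard one for identifying a gauge-theoretic zero locus (here the vortex equation's moment-map part) with a complex-gauge quotient, carried out throughout in the $I$-invariant category; this is the $\Pin^-(2)$/real analogue of the corresponding step in Teleman's treatment, so I would emulate that argument. The key observation, already recorded just before the statement, is that for $v=(B,\phi)\in Z(\mu_t^I)$ with image $\mathfrak{p}=\tilde{\mathcal{J}}(v)$, the tangent space $T_{[v]}Z(\mu_t^I)$ is exactly the $L^2$-orthogonal complement of the tangent space to the complex-gauge orbit $\mathfrak{p}\cdot(\G^{\C}_{k+1})^I$ inside $T_{\mathfrak{p}}(\mathscr{P}^s_k)^I$. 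So at the infinitesimal level $d\mathcal{J}^\prime$ restricted to $T_{[v]}Z(\mu_t^I)$ is an isomorphism onto $T_{[\mathfrak{p}]}(\mathscr{B}^s_k)^I$; the content is to upgrade this to a genuine local homeomorphism.

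**Steps.** First I would set up the real-gauge slice at $v$ inside $(\mathcal{C}^*_k)^I$ and the complex-gauge slice at $\mathfrak{p}$ inside $(\mathscr{P}^s_k)^I$, both defined by the respective Coulomb-type conditions $d^*\dot B - i\Ima(\dot\phi\bar\phi)=0$ and $2\bar\partial^*\dot B^{0,1}-\dot\phi\bar\phi=0$, and note that each is a genuine local slice because the gauge action on irreducibles is free and proper (in the $I$-invariant setting the fixed-point set of a free involution on a free, proper action is again free and proper). All of this is Hilbert-manifold implicit/inverse function theorem bookkeeping. Second, I would observe that $\mathcal{J}^\prime$ sends the real slice to the complex slice, and that on these slices the map is, up to the identification $\tilde{\mathcal{J}}(B,\phi)=(\bar\partial_B,\phi)$, essentially the inclusion of the moment-map zero set: the $\R$-gauge orbit of $v$ and the $\C$-gauge orbit of $\mathfrak{p}$ meet the slices transversally in complementary dimensions, so locally $Z(\mu_t^I)\cap(\text{real slice})$ maps bijectively and bicontinuously onto $(\text{complex slice})$. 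Concretely one argues: each point of the complex slice near $\mathfrak{p}$ has a unique $I$-invariant real-gauge representative on which $\tilde\mu_t$ vanishes — this is where the inverse function theorem is applied to the map $(B,\phi,f)\mapsto \tilde\mu_t\big((\bar\partial_{B},\phi)\cdot f\big)$ for $f$ in the $I$-invariant real gauge group acting within the $\C$-gauge orbit — and the derivative of this in the $f$-direction is precisely the elliptic operator whose invertibility (on the $I$-invariant part) is guaranteed by $\Xi>0$, which forces irreducibility and rules out the reducible locus where the relevant Laplacian has a kernel. Third, I would assemble these local pictures into the claim that $\mathcal{J}^\prime|_{Z(\mu_t^I)}$ is open and locally injective with continuous local inverse, i.e.\ a local homeomorphism, and note that the homeomorphism $\mathcal{J}$ of \thmref{thm:corr} will then follow once one checks (later) that globally the map is a bijection.

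**The main obstacle.** The genuinely delicate point is the $I$-equivariant inverse function theorem argument: one must check that restricting the whole slice/moment-map package to the $I$-invariant part does not spoil ellipticity or the needed surjectivity. The operator $f\mapsto \frac{d}{dt}\big|_0 \tilde\mu_t(\mathfrak{p}\cdot e^{tf})$ is (a real form of) a Laplace-type operator on functions; on the $I$-invariant subspace it is still self-adjoint elliptic, and its kernel consists of $I$-invariant (real) constants, which act trivially once $\phi\neq 0$ — so positivity of $\Xi$ (guaranteeing irreducibility of all solutions) is exactly what makes the relevant restricted operator an isomorphism. I expect this to be the step demanding the most care, since one is working with the fixed locus of an involution acting on an infinite-dimensional configuration space, and one must be sure the Banach/Hilbert splittings and the Fredholm/elliptic estimates descend cleanly to the invariant part; the rest is a faithful transcription of Teleman's local model \cite{Teleman} into the $I$-invariant category.
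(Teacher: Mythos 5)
Your proposal is correct and follows the route the paper implicitly takes (by pointing to Teleman, Proposition 8.2.20): the K\"ahler-identity computation just before the proposition identifies $T_{[v]}Z(\mu_t^I)$ with the $L^2$-orthogonal complement of the $(\G^{\C}_{k+1})^I$-orbit tangent, i.e.\ with a complex-gauge slice at $\mathfrak{p}=\tilde{\mathcal{J}}(v)$, and the inverse function theorem applied to the linearized Kazdan--Warner operator in the $e^{-\psi}$-direction (invertible because $\phi\neq0$ on $(\mathcal{B}^*_k)^I$) upgrades this infinitesimal isomorphism to a local homeomorphism. One terminological caution: the gauge parameter one varies is the positive-exponential part $\{e^{-\psi}:\psi\in C^\infty(X;\R)^I\}$ transverse to $\G^I$ inside $(\G^{\C})^I$, not the unitary $\G^I$ itself (which preserves $\tilde\mu_t$), and what is actually needed is irreducibility $\phi\neq 0$ rather than $\Xi>0$ directly --- though, as you note, $\Xi>0$ is precisely what guarantees every vortex in sight is irreducible.
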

\begin{proof}[Proof of \thmref{thm:corr}]
({\it Cf.}  \cite{Teleman}, Proposition 8.2.20.)
By \propref{prop:lochomeo}, it suffices to see that $\mathcal{J}$ is bijective.
First, we prove $\mathcal{J}$ is surjective.
Suppose $(\delta,\phi)\in\mathscr{M}^s(E)^I$.
We have an Hermitian connection $B=A_{h,\delta}$ associated with the holomorphic structure $\delta$.
We want to find an $I$-invariant function $\psi\in C^\infty(X;\R)^I$ such that $(A_{h,\delta\cdot f},\phi)$ is a solution to \eqref{eq:vortex} for $f=e^{-\psi}$.
Note that 
\[
A_{h,\delta\cdot f}  = A_{h,\delta} -\bar{\partial}\psi +\partial\psi, 
\]
and $(A_{h,\delta\cdot f},\phi)$ is a $t$-vortex if and only if
\begin{equation}\label{eq:KW0}
i\Lambda_g\bar\partial\partial\psi + \frac12e^{2\psi}|\phi|^2 = t- i\Lambda_gF_B.
\end{equation}
Since $\phi$ and $\theta:=t- i\Lambda_gF_B$ are $I$-invariant section and function, they descend on $\X$, i.e., we find $\hat{\phi}$ and $\hat{\theta}$ such that $\phi=\pi^*\hat{\phi}$ and $\theta=\pi^*\hat{\theta}$.
Consider the following equation for $\hat{\psi}\in C^\infty(\X;\R)$:
\[
\Delta_{\hat{g}}\hat{\psi} + \frac12e^{2\hat{\psi}}|\hat{\phi}|^2 = \hat{\theta}.
\]
This is  a Kazdan-Warner type equation \cite{KW}, and has a unique solution $\hat{\psi}$ since $\int_{\hat{X}}\hat{\theta}dvol_{\hat{g}} = \frac12\int_X\theta dvol_g =\frac12\Theta>0$.
Then $\psi=\pi^*\hat{\psi}$ is an $I$-invariant solution to \eqref{eq:KW0}.

We prove $\mathcal{J}$ is injective.
Suppose $(B_1,\phi_1)$, $(B_2,\phi_2)$ are $I$-invariant solutions to \eqref{eq:vortex} such that $(\partial_{B_1},\phi_1)=(\partial_{B_2},\phi_2)\cdot f$ for some $f\in(\G^{\C})^I$.
By replacing $(B_2,\phi_2)$ with $\G^I$-equivalent one, if necessary, we may assume $f= e^{-\psi}$ for some $I$-invariant function $\psi$.
Since $(B_2,\phi_2)$ is an $I$-invariant solution, $\psi$ satisfies \eqref{eq:KW0}.
Since $(B_1,\phi_1)$ is an $I$-invariant solution, $\psi=0$ is a solution to \eqref{eq:KW0}.
Moving to the downstairs, we see that the uniqueness of the solution to Kazdan-Warner's equation implies that $\psi=0$.
\end{proof}
Recall that $\hat{\mathfrak{s}}_0\hat\otimes\hat{E}$ is a $\Spincm$ structure  on $X\to\X$ and $\mathfrak{s}_0 \otimes E$ is the canonical reduction of $\pi^*(\hat{\mathfrak{s}}_0\hat\otimes\hat{E})$.
We choose an $I$-invariant closed $(1,1)$-form $\eta$ for the perturbation term of the $I$-invariant Seiberg--Witten equation \eqref{eq:SWKL}. 
\begin{Corollary}\label{cor:isom1}
Let $t=\pi\Lambda_g\eta-s_g/2$, $e=\tilde{c}_1(\hat{E})$ and $k=\tilde{c}_1(\hat{K})$.
\begin{enumerate}
\item If $\Theta>0$, then 
\[
\M(\X,\hat{\mathfrak{s}}_0\hat\otimes\hat{E})\cong \M(X,\mathfrak{s}_0\otimes E)^I\cong \mathcal{V}_t^*(E)^I\cong \mathscr{M}^s(E)^I\cong\mathscr{D}(e)^I.
\]
\item If $\Theta<0$, then 
\[
\M(\X,\hat{\mathfrak{s}}_0\hat\otimes\hat{E})\cong \M(X,\mathfrak{s}_0\otimes E)^I\cong \mathcal{V}_{-t}^*(E^{-1}\otimes K)^I\cong \mathscr{M}^s(E^{-1}\otimes K)^I\cong\mathscr{D}(k-e)^I.
\]
\end{enumerate}
\end{Corollary}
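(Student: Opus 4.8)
The plan is to assemble \corref{cor:isom1} by chaining together the identifications already established in the excerpt, being careful to thread the hypothesis $\Theta \neq 0$ (in its two sign cases) through each link. The leftmost identification $\M(\X,\hat{\mathfrak{s}}_0\hat\otimes\hat{E})\cong \M(X,\mathfrak{s}_0\otimes E)^I$ is the general correspondence between the $\Pin^-(2)$-monopole moduli space downstairs and the $I$-invariant Seiberg--Witten moduli space upstairs, recorded in \subsecref{subsec:symm} (via \cite{N1,N2}), applied to $\hat{\mathfrak{s}}=\hat{\mathfrak{s}}_0\hat\otimes\hat{E}$ whose canonical reduction of $\pi^*\hat{\mathfrak{s}}$ is exactly $\mathfrak{s}_0\otimes E$ with $E$ the canonical $\U(1)$-reduction of $\hat{E}$. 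I would note that this step requires choosing the perturbation of the Seiberg--Witten equation to be of the form pulled back from $\X$; since $\eta$ is taken $I$-invariant, the perturbed equation \eqref{eq:SWKL} is $I$-equivariant, so the correspondence applies verbatim.

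Next I would invoke \thmref{thm:SWcpl}: with $t=\pi\Lambda_g\eta-s_g/2$, an $I$-invariant solution of \eqref{eq:SWKL} reduces, when $\Theta>0$, to the system \eqref{eq:SWa}, which is precisely the $t$-vortex system \eqref{eq:vortex} on $E$ — giving $\M(X,\mathfrak{s}_0\otimes E)^I\cong \mathcal{V}_t^*(E)^I$ (the Corollary following the vortex definition already states this, and $\Theta>0$ forces $\Xi>0$ so that $\mathcal{V}_t^* = \mathcal{V}_t$). When $\Theta<0$, \thmref{thm:SWcpl} gives \eqref{eq:SWb}, which after the Serre-duality substitution $\varphi=\bar\beta$, $B\mapsto B'$ with $B\otimes B' = C^{\vee}$ becomes the $(-t)$-vortex system on $K\otimes E^{-1}$; here I would remark that $\tilde{c}_1$ is additive and $\tilde{c}_1(K\otimes E^{-1}) = k - e$, and that $-\Theta>0$ guarantees the relevant $\Xi>0$. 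Then \thmref{thm:corr} (applicable precisely because $\Xi>0$) supplies the homeomorphism $\mathcal{V}^*_{\pm t}(\,\cdot\,)^I\cong \mathscr{M}^s(\,\cdot\,)^I$, and finally \propref{thm:corr}'s companion — the $I$-equivariant version of Teleman's Proposition~8.2.13 proved in \subsecref{subsec:I-div} and the following subsection — gives $\mathscr{M}^s(E)^I\cong \mathscr{D}(e)^I$ (resp. $\mathscr{M}^s(E^{-1}\otimes K)^I\cong\mathscr{D}(k-e)^I$).

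Concretely the proof is just: ``Combine the identification of \subsecref{subsec:symm}, \thmref{thm:SWcpl}, the Corollary after the vortex definition, \thmref{thm:corr}, and the Proposition of \subsecref{subsec:I-div}, observing that $\Theta>0$ (resp. $\Theta<0$) yields $\Xi>0$ in the relevant bundle and that $\tilde{c}_1(E^{-1}\otimes K)=k-e$.'' I would spell out each arrow in a short displayed chain matching the statement and point to the governing result over each $\cong$.

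The main obstacle I anticipate is bookkeeping rather than conceptual: I must verify that the sign of $\Theta$ translates correctly into positivity of $\Xi$ for the bundle that actually appears (for $E$ in case (1), for $K\otimes E^{-1}$ in case (2)), and that the perturbation term $t$ transforms as $t\mapsto -t$ under the Serre-duality swap in case (2), matching the normalization in \eqref{eq:vortex} and in \eqref{eq:SWb}–\eqref{eq:Vcpl}. A secondary point of care is making sure the $I$-invariant perturbation $\eta$ chosen downstairs is simultaneously admissible (generic) for transversality in all the moduli descriptions; this is where the appeal to the generic-$t$ statement preceding \thmref{thm:corr} is needed, and I would flag that the genericity is taken among $I$-invariant $\eta$ (equivalently, perturbations pulled back from $\X$).
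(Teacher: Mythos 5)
Your proposal is correct and follows exactly the route implicit in the paper: the displayed chain is obtained by concatenating the $\Pin^-(2)$/$I$-invariant Seiberg--Witten correspondence from \S\ref{subsec:symm}, the dichotomy of \thmref{thm:SWcpl} as packaged in the unnumbered corollary right after the definition of $t$-vortices (including the Serre-duality passage to $\mathcal V_{-t}^*(K\otimes E^{-1})^I$ when $\Theta<0$), \thmref{thm:corr} (valid since $\Xi>0$ in the relevant bundle), and part (1) of the $I$-equivariant Teleman Proposition identifying $\mathscr M^s(\cdot)^I$ with $\mathscr D(\cdot)^I$. Your bookkeeping remarks about $\Theta$ vs.\ $\Xi$, $\tilde c_1(E^{-1}\otimes K)=k-e$, and the need for $\eta$ to be $I$-invariant all match the paper's implicit reasoning; there is nothing to add.
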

%
\subsection{Witten's perturbation}
%
%
In the previous subsection, we consider the perturbation by an $I$-invariant $(1,1)$-form $\eta$, and the $\Pin^-(2)$-monopole moduli space is identified with the $I$-invariant moduli space of vortices and holomorphic simple pairs.
In this subsection, we consider the perturbation as in \thmref{thm:SWml}.

Let $(X,\omega,\iota)$ be a compact K\"{a}hler surface with anti-holomorphic involution $\iota$, and $E$ and $E^\prime$ two $C^\infty$ complex line bundles such that $\iota^*E\cong\bar{E}$ and $\iota^*E^\prime\cong\bar{E}^\prime$.
We suppose $I=\overline{\iota^*(\cdot)}$ defines involutions on $E$ and $E^\prime$.
Consider $\mathscr{P}^s(E)\times\mathscr{P}^s(E^\prime)$. 
Let $\G^{\C}=C^\infty(X,\C^*)$ act on $\mathscr{P}^s(E)\times\mathscr{P}^s(E^\prime)$ by  
\[
(\mathfrak{p},\mathfrak{p}^\prime)\cdot f :=(\mathfrak{p}\cdot f,\mathfrak{p}^\prime\cdot f^{-1})  
\text{ for }  (\mathfrak{p},\mathfrak{p}^\prime)\in \mathscr{P}^s(E)\times\mathscr{P}^s(E^\prime),\ f\in\G^{\C}.
\]
Fix a holomorphic structure $\mathscr{N}$ on $N=E\otimes E^\prime$, and let $\delta_{\mathscr{N}}$ be the corresponding integrable semiconnection.
(Later we assume $\mathscr{N}$, and therefore $\delta_{\mathscr{N}}$, are  $I$-invariant.)
Let 
\[
\mathscr{H}^s(E)\times_{\mathscr{N}}\mathscr{H}^s(E^\prime) :=\{((\delta,\phi),(\delta^\prime,\phi^\prime)) \in \mathscr{H}^s(E)\times_{\mathscr{N}}\mathscr{H}^s(E^\prime)\,|\,\delta\otimes\delta^\prime = \delta_{\mathscr{N}}\}.
\]
A natural map $\mathscr{T}\colon \mathscr{P}^s(E)\times\mathscr{P}^s(E^\prime)\to \mathscr{P}^s(N)$ given by $((\delta,\phi),(\delta^\prime,\phi^\prime))\mapsto(\delta\otimes\delta^\prime,\phi\otimes\phi^\prime)$ is $\G^{\C}$-invariant.
We have a $\G^{\C}$-equivariant commutative diagram
\[
\begin{CD}
\mathscr{H}^s(E)\times\mathscr{H}^s(E^\prime) @>{\mathscr{T}}>>\mathscr{H}^s(N)\\
@AAA @AAA\\
\mathscr{H}^s(E)\times_{\mathscr{N}}\mathscr{H}^s(E^\prime) @>{\mathscr{T}_{\mathscr{N}}}>>\{\delta_{\mathscr{N}}\} \times (H^0(\mathscr{N})\setminus\{0\})
\end{CD}
\]
Now suppose $\mathscr{N}$ is $I$-invariant and $(H^0(\mathscr{N})\setminus\{0\})^I\neq\emptyset$. 
Choose an $I$-invariant holomorphic section $\xi\in (H^0(\mathscr{N})\setminus\{0\})^I$.
Let
\[
\mathscr{M}^s(E,E^\prime,\mathscr{N},\xi)^I = \left(\mathscr{T}_{\mathscr{N}}^{-1}(\xi)\right)^I/\G_{\C}^I.
\]

For $e=c_1(E)$ and $e^\prime=c_1(E^\prime)$, consider the map defined by sum of divisors
\[
\theta\colon\mathscr{D}(e)\times\mathscr{D}(e^\prime)\to \mathscr{D}(e+e^\prime).
\]
For $\Delta\in\mathscr{D}(e+e^\prime)$, let
\[
\mathscr{D}_b(\Delta) := \theta^{-1}(\Delta).
\]
Then, for $\Delta=Z(\xi)$, we have a natural identification
\[
\mathscr{M}^s(E,E^\prime,\mathscr{N},\xi)^I \cong \mathscr{D}_b(\Delta)^I.
\]
The Zariski tangent space $T_{(D,D^\prime)}\left(\mathscr{D}_b(\Delta)^I\right)$ of $\mathscr{D}_b(\Delta)^I$ at $(D,D^\prime)$ is given by
\[
T_{(D,D^\prime)}\left(\mathscr{D}_b(\Delta)^I\right) = \ker\left(\theta_*\colon T_D\left(\mathscr{D}(e)^I\right) \oplus T_{D^\prime}\left(\mathscr{D}(e^\prime)^I\right) \to T_{\Delta}\left(\mathscr{D}(e+e^\prime)^I\right)\right).
\]
For $I$-invariant $D$, $D^\prime$, $\Delta=D+D^\prime$, the inclusions $\emptyset\subset D\subset\Delta$, $\emptyset\subset D^\prime\subset\Delta$ induce the inclusions
\[
\pi^I\mathscr{O}_X\subset\pi^I\mathscr{O}_X(D)\subset\pi^I\mathscr{O}_X(\Delta),\quad \pi^I\mathscr{O}_X\subset\pi^I\mathscr{O}_X(D^\prime)\subset\pi^I\mathscr{O}_X(\Delta),
\]
\begin{align*}
\pi^I\mathscr{O}_D(D)=\pi^I\left(\mathscr{O}_X(D)/\mathscr{O}_X\right)&\hookrightarrow\pi^I\left(\mathscr{O}_X(\Delta)/\mathscr{O}_X\right)=\pi^I\mathscr{O}_\Delta(\Delta),\\
\pi^I\mathscr{O}_{D^\prime}(D^\prime)=\pi^I\left(\mathscr{O}_X(D^\prime)/\mathscr{O}_X\right)&\hookrightarrow\pi^I\left(\mathscr{O}_X(\Delta)/\mathscr{O}_X\right)=\pi^I\mathscr{O}_\Delta(\Delta),
\end{align*}
and therefore the injective maps
\begin{align*}
T_D\left(\mathscr{D}(e)^I\right) = H^0(\pi^I\mathscr{O}_D(D))\overset{i}{\hookrightarrow}& H^0(\pi^I\mathscr{O}_{\Delta}(\Delta))=T_{\Delta}\left(\mathscr{D}(e+e^\prime)^I\right),\\ 
T_D\left(\mathscr{D}(e^\prime)^I\right) = H^0(\pi^I\mathscr{O}_{D^\prime}(D^\prime))\overset{i^\prime}{\hookrightarrow}& H^0(\pi^I\mathscr{O}_{\Delta}(\Delta))=T_{\Delta}\left(\mathscr{D}(e+e^\prime)^I\right).
\end{align*}
Then it can be seen that
\[
\theta_*(a,a^\prime)=i(a)+i^\prime(a^\prime).
\]
\begin{Proposition}\label{prop:maxdiv}
Let $D_0$ be the maximal effective divisor such that $D_0\leq D$ and $D_0\leq D^\prime$. 
(Note that $D_0$ is also $I$-invariant.)
Then there exists an isomorphism
$T_{(D,D^\prime)}\left(\mathscr{D}_b(\Delta)^I\right) \cong H^0(\pi^I\mathscr{O}_{D_0}(D_0))$.
\end{Proposition}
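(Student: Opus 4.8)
The plan is to peel off the formal layers in the description of the Zariski tangent space recorded above and reduce everything to the elementary fact that the divisorial sheaves of $D$ and $D'$ intersect in the one attached to $\min(D,D')$ (this $\min$ being exactly the maximal effective divisor below both, i.e. $D_0$).

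First, by the displayed formula, $T_{(D,D')}(\mathscr{D}_b(\Delta)^I)=\ker\theta_*$ with $\theta_*(a,a')=i(a)+i'(a')$, where $i$ and $i'$ are the \emph{injective} maps on $H^0$ induced by the sheaf inclusions $\pi^I\mathscr{O}_D(D)\hookrightarrow\pi^I\mathscr{O}_\Delta(\Delta)$ and $\pi^I\mathscr{O}_{D'}(D')\hookrightarrow\pi^I\mathscr{O}_\Delta(\Delta)$. I would observe that $(a,a')\mapsto i(a)$ restricts to an injection on $\ker\theta_*$ (if $i(a)=0$ then $a=0$, hence $i'(a')=0$, hence $a'=0$) whose image is $\im(i)\cap\im(i')$ inside $H^0(\pi^I\mathscr{O}_\Delta(\Delta))$; and, by left exactness of $H^0(\X;-)$, the subgroups $\im(i)$ and $\im(i')$ are exactly $H^0(\pi^I\mathscr{O}_D(D))$ and $H^0(\pi^I\mathscr{O}_{D'}(D'))$ sitting inside $H^0(\pi^I\mathscr{O}_\Delta(\Delta))$. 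Thus it suffices to identify their intersection with $H^0(\pi^I\mathscr{O}_{D_0}(D_0))$, the latter regarded inside $H^0(\pi^I\mathscr{O}_\Delta(\Delta))$ via the inclusion coming from $D_0\le\Delta$.

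Next I would prove the corresponding statement for sheaves upstairs. Since $D,D',D_0$ are effective with $D_0=\min(D,D')\le\Delta=D+D'$, all of $\mathscr{O}_X$, $\mathscr{O}_X(D)$, $\mathscr{O}_X(D')$, $\mathscr{O}_X(D_0)$ lie inside $\mathscr{O}_X(\Delta)$, and I claim
\[
\mathscr{O}_X(D)\cap\mathscr{O}_X(D')=\mathscr{O}_X(D_0)
\]
there. This is local: on a chart where $D$, $D'$, $\Delta$ are cut out by holomorphic germs $f$, $g$, $fg$, the local ring is a unique factorization domain, so a germ $p$ is divisible by both $f$ and $g$ iff $\operatorname{lcm}(f,g)=fg/\gcd(f,g)$ divides it, and $\gcd(f,g)$ is a local equation for $\min(D,D')=D_0$; writing sections as $p/(fg)$, this yields the claim. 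Everything is $I$-equivariant: $\iota$ permutes irreducible hypersurfaces and fixes the cycles $D$, $D'$, hence fixes $\min(D,D')=D_0$, which is the parenthetical assertion in the statement. Dividing by the common $I$-equivariant subsheaf $\mathscr{O}_X$ (for subsheaves all containing a fixed one, the intersection may be formed before or after passing to the quotient) gives
\[
\mathscr{O}_D(D)\cap\mathscr{O}_{D'}(D')=\mathscr{O}_{D_0}(D_0)
\]
as $I$-equivariant subsheaves of $\mathscr{O}_\Delta(\Delta)$.

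Finally I would descend and take sections. Because $\iota$ is free, $\pi^I$ is exact, so it carries the last identity to $\pi^I\mathscr{O}_D(D)\cap\pi^I\mathscr{O}_{D'}(D')=\pi^I\mathscr{O}_{D_0}(D_0)$ inside $\pi^I\mathscr{O}_\Delta(\Delta)$; left exactness of $H^0(\X;-)$ then yields $H^0(\pi^I\mathscr{O}_D(D))\cap H^0(\pi^I\mathscr{O}_{D'}(D'))=H^0(\pi^I\mathscr{O}_{D_0}(D_0))$ inside $H^0(\pi^I\mathscr{O}_\Delta(\Delta))$. Combined with the first step this is exactly $T_{(D,D')}(\mathscr{D}_b(\Delta)^I)\cong H^0(\pi^I\mathscr{O}_{D_0}(D_0))$. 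I do not expect a serious obstacle: the only genuine content is the local identity $\mathscr{O}_X(D)\cap\mathscr{O}_X(D')=\mathscr{O}_X(\min(D,D'))$, and the two points requiring care are that $i$, $i'$ are literally the inclusions of $H^0$'s of subsheaves (so that $\ker\theta_*$ is the intersection on the nose, not merely abstractly isomorphic to it) and that the $I$-equivariant structures are tracked so that exactness of $\pi^I$ and left exactness of $H^0(\X;-)$ apply verbatim.
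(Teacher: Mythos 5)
Your proof is correct but takes a genuinely different route from the paper's. The paper (following Teleman's Lemma 9.3.3) argues by diagram-chasing with two decomposition exact sequences: first $\Delta = D + D'$, from which it deduces that $i(a)$ restricts to zero on both $D$ and $D'$, and then $\Delta = D_0 + \tilde D$ where $\tilde D = \max(D,D')$, from which it lifts $i(a)$ back to $H^0(\pi^I\mathscr{O}_{D_0}(D_0))$. The crucial intermediate step there---that a section of $\pi^I\mathscr{O}_\Delta(\Delta)$ vanishing on $D$ and on $D'$ also vanishes on $\tilde D$---is asserted without proof, and ultimately rests on the same local commutative-algebra fact you make explicit. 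Your route cuts directly to that fact: identify $\ker\theta_*$ with $\operatorname{im}(i)\cap\operatorname{im}(i')$ inside $H^0(\pi^I\mathscr{O}_\Delta(\Delta))$, prove the sheaf identity $\mathscr{O}_X(D)\cap\mathscr{O}_X(D')=\mathscr{O}_X(D_0)$ in $\mathscr{O}_X(\Delta)$ by unique factorization in the local rings $\mathscr{O}_{X,x}$, quotient by the common $\mathscr{O}_X$ to get $\mathscr{O}_D(D)\cap\mathscr{O}_{D'}(D')=\mathscr{O}_{D_0}(D_0)$, and push through the exact $\pi^I$ and left-exact $H^0$, both of which preserve kernels and hence intersections. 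This buys you a self-contained argument that avoids introducing $\tilde D$ and the decomposition sequences altogether, and it makes transparent exactly where the UFD property of the local rings is used. The paper's approach is the one that parallels Teleman's exposition and stays inside the language of short exact sequences of sheaves; yours is more elementary and local. Both are valid, and your handling of the $I$-equivariance (permutation-invariance of the multiplicity functions implies $\min(D,D')$ is $I$-invariant; exactness of $\pi^I$ because $\iota$ is free) matches what the paper needs.
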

\begin{proof}
This is proved by considering the $I$-invariant part or applying $\pi^I$ to everything in the proof of  \cite[Lemma 9.3.3]{Teleman}.
The commutative diagram
\[
\xymatrix{&& \pi^I\Os(D)\ar[rd]\\
\pi^I\Os\ar[r]  &\pi^I\Os(D_0)\ar[ru]\ar[rd] & & \pi^I\Os(\Delta)\\
&&\pi^I\Os(D^\prime)\ar[ru]
}
\]
induces a commutative diagram
\[
\xymatrix{& H^0(\pi^I\Os_D(D))\ar[rd]^{i}\\
H^0(\pi^I\Os_{D_0}(D_0))\ar[ru]^u\ar[rd]_{u^\prime}\ar[rr]^v & & H^0(\pi^I\Os_{\Delta}(\Delta))\\
&H^0(\pi^I\Os_{D^\prime}(D^\prime))\ar[ru]_{i^\prime}
}
\]
where all of maps are linear monomorphisms.
The image of the monomorphism 
\[
(-u)\oplus u^\prime\colon H^0(\pi^I\Os_{D_0}(D_0)) \to H^0(\pi^I\Os_D(D))\oplus H^0(\pi^I\Os_{D^\prime}(D^\prime))
\]
is contained in $\ker(\theta_{*})$.
Therefore 
\[
H^0(\pi^I\Os_{D_0}(D_0))\subset \ker(\theta_{*})=T_{(D,D^\prime)}\left(\mathscr{D}_b(\Delta)^I\right).
\]

Conversely, let $(a,a^\prime)$ be an element of $\ker(\theta_{*})$. Then $i(a)+i^\prime(a^\prime)=0$.
We have the exact sequences
\begin{gather*}
0\to\pi^I\Os_{D^\prime}(-D)\to\pi^I\Os_{\Delta}\overset{\rho}{\to}\pi^I\Os_D\to 0,\\
0\to\pi^I\Os_{D}(-D^\prime)\to\pi^I\Os_{\Delta}\overset{\rho^\prime}{\to}\pi^I\Os_{D^\prime}\to 0,\\
0\to\pi^I\Os_{D^\prime}(D^\prime)\overset{i^\prime}{\to}\pi^I\Os_{\Delta}(\Delta)\overset{r}{\to}\pi^I\Os_D(\Delta)\to 0,\\
0\to\pi^I\Os_{D}(D)\overset{i}{\to}\pi^I\Os_{\Delta}(\Delta)\overset{r^\prime}{\to}\pi^I\Os_{D^\prime}(\Delta)\to 0.
\end{gather*}
Then $\rho$, $\rho^\prime$, $r$, $r^\prime$ are restriction maps, since the corresponding maps in the exact sequences without $\pi^I$ called the {\it decomposition sequences} \cite[p.62]{BHPV} are restriction maps.
Since $r^\prime\circ i=r\circ i^\prime=0$ and $i(a)=-i^\prime(a^\prime)$, we have
\[
r^\prime(i(a))=0,\quad r(i(a))=-r(i^\prime(a^\prime))=0.
\]
Hence the restrictions of $i(a)\in H^0(\pi^I\Os_{\Delta}(\Delta))$ to $D$ and $D^\prime$ are $0$.
Let $\tilde{D}\leq \Delta$ be the smallest effective divisor such that $D\leq \tilde{D}$, $D^\prime\leq\tilde{D}$.
Then the restriction of $i(a)$ to $\tilde{D}$ is also $0$.
By using the decomposition $\Delta = D_0+\tilde{D}$, we obtain the exact sequence
\[
0\to\pi^I\Os_{D_0}(D_0)\overset{v}{\to}\pi^I\Os_{\Delta}(\Delta)\overset{\tilde{r}}{\to}\pi^I\Os_{\tilde{D}}(\Delta)\to 0.
\]
Since $\tilde{r}(i(a))=0$, we have an element $b\in H^0(\pi^I\Os_{D_0}(D_0))$ such that $v(b)=i(a)$.
Since $v=i\circ u$ and $i$ is injective, $a=u(b)$.
Then $i^\prime(a^\prime + u^\prime(b))= -i(a)+(i^\prime\circ u^\prime)(b) = -i(a)+v(b)=0$, and hence $a^\prime=-u^\prime(b)$.  
\end{proof}

Fix Hermitian metrics $h$, $h^\prime$ on $E$ and $E^\prime$, a function $t\in C^\infty(X,\R)$, an integrable connection $\Sigma$ on $N$ and a nonzero $\bar{\partial}_{\Sigma}$-holomorphic section $\xi\in \Omega^0(N)\setminus\{0\}$.
(Later we assume that all of them are $I$-invariant.)
Let $\mathscr{N}$ be the holomorphic structure on $N$ induced from $\Sigma$.
Let $\G=C^\infty(X,S^1)$ act on $(\mathcal{A}(E,h)\times \Omega^0(E))\times(\mathcal{A}(E^\prime,h^\prime)\times \Omega^0(E^\prime))$ by
\[
((B,\phi),(B^\prime,\phi^\prime)) \cdot f = ((B,\phi)\cdot f,(B^\prime,\phi^\prime)\cdot f^{-1}) .
\]
Consider the following system of equations:
\begin{equation}\label{eq:WV}
\left\{
\begin{gathered}
\bar{\partial}_B\phi = \bar{\partial}_{B^\prime}\phi^\prime = 0\\
F^{0,2}_B = F^{0,2}_{B^\prime} = 0\\
i\Lambda_g (F_B - F_{B^\prime}) +\frac12(|\phi|^2 -|\phi^\prime|^2)=t\\
B\otimes B^\prime =\Sigma\\
\phi\otimes\phi^\prime = \xi
\end{gathered}\right.
\end{equation}
Suppose that all of $h$, $h^\prime$, $t$, $\Sigma$ and $\eta$ are $I$-invariant.
Let
\[
\mathcal{V}_t(E,E^\prime,\Sigma,\xi)^I=\{\text{ $I$-invariant solutions to \eqref{eq:WV} }\}/\G^I.
\]
\begin{Theorem}
The correspondence
\[
((B,\phi),(B^\prime,\phi^\prime)) \mapsto ((\bar{\partial}_B,\phi), (\bar{\partial}_{B^\prime},\phi^\prime)
\]
induces a homeomorphism
\[
\mathcal{V}_t(E,E^\prime,\Sigma,\xi)^I \overset{\cong}{\to} \mathscr{M}^s(E,E^\prime,\mathscr{N},\xi)^I
\]
\end{Theorem}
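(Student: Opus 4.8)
The plan is to run the proof of \thmref{thm:corr} in the present two-bundle setting, with the anti-diagonal $\G^{\C}$-action on $\mathscr{P}^s(E)\times\mathscr{P}^s(E^\prime)$ replacing the ordinary one. First note the map is well defined: if $((B,\phi),(B^\prime,\phi^\prime))$ solves \eqref{eq:WV}, then $\bar\partial_B\phi=0$ and $F_B^{0,2}=0$ make $\delta:=\bar\partial_B$ an integrable semiconnection with holomorphic section $\phi$; since $\phi\otimes\phi^\prime=\xi\neq 0$, both $\phi$ and $\phi^\prime$ are nonzero, so $(\delta,\phi)\in\mathscr{H}^s(E)$ and $(\delta^\prime,\phi^\prime)\in\mathscr{H}^s(E^\prime)$ with $\delta\otimes\delta^\prime=\bar\partial_{B\otimes B^\prime}=\delta_{\mathscr{N}}$ and $\phi\otimes\phi^\prime=\xi$; this is an $I$-invariant point of $\mathscr{T}_{\mathscr{N}}^{-1}(\xi)$, and $I$-invariance is preserved. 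As in \thmref{thm:corr}, the argument then has two parts: the induced map is a local homeomorphism, and it is bijective.

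\textbf{Local homeomorphism.} Let $\tilde{\mathcal J}$ be the map $((B,\phi),(B^\prime,\phi^\prime))\mapsto((\bar\partial_B,\phi),(\bar\partial_{B^\prime},\phi^\prime))$ and set
\[
\tilde\mu_t((B,\phi),(B^\prime,\phi^\prime)) = i\Lambda_g(F_B-F_{B^\prime}) + \frac{1}{2}\bigl(|\phi|^2-|\phi^\prime|^2\bigr) - t,
\]
so that \eqref{eq:WV} cuts out, inside the $I$-invariant part of the relevant $L^2_k$-completion, the locus where $\delta,\delta^\prime$ are integrable, $\phi,\phi^\prime$ are holomorphic, $B\otimes B^\prime=\Sigma$, $\phi\otimes\phi^\prime=\xi$, and $\tilde\mu_t=0$. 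Exactly as in \propref{prop:lochomeo}, the K\"ahler identities $\partial^*=i[\Lambda_g,\bar\partial]$, $\bar\partial^*=-i[\Lambda_g,\partial]$ show that at a solution $v$ with $\mathfrak{p}=\tilde{\mathcal J}(v)$ the tangent space of this locus is the $L^2$-orthogonal complement, inside the tangent space of the $I$-invariant fiber of holomorphic simple pairs over $(\delta_{\mathscr{N}},\xi)$, of the tangent to the $(\G^{\C}_{k+1})^I$-orbit through $\mathfrak{p}$; here one uses that both the anti-diagonal $\G^{\C}$-action and the $\G$-action fix $B\otimes B^\prime$ and $\phi\otimes\phi^\prime$, so that the relevant infinitesimal gauge parameter is a single function. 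Hence the induced map $\mathcal{V}_t(E,E^\prime,\Sigma,\xi)^I\to\mathscr{M}^s(E,E^\prime,\mathscr{N},\xi)^I$ is a local homeomorphism, and it remains to prove bijectivity.

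\textbf{Bijectivity.} Given an $I$-invariant holomorphic representative $((\delta,\phi),(\delta^\prime,\phi^\prime))$ with $\delta\otimes\delta^\prime=\delta_{\mathscr{N}}$, $\phi\otimes\phi^\prime=\xi$, take the Chern connections $B=A_{h,\delta}$, $B^\prime=A_{h^\prime,\delta^\prime}$; we seek an $I$-invariant real function $\psi$ for which $((B,\phi),(B^\prime,\phi^\prime))\cdot e^{-\psi}$ solves \eqref{eq:WV}. Because $e^{-\psi}$ acts anti-diagonally, $\phi\mapsto e^{\psi}\phi$, $\phi^\prime\mapsto e^{-\psi}\phi^\prime$, while $B\otimes B^\prime$, $\phi\otimes\phi^\prime$ and the integrability and holomorphicity equations are unchanged; the only remaining equation becomes the scalar elliptic equation
\[
c\, i\Lambda_g\bar\partial\partial\psi + \frac{1}{2}\bigl(e^{2\psi}|\phi|^2-e^{-2\psi}|\phi^\prime|^2\bigr) = t-i\Lambda_g(F_B-F_{B^\prime})
\]
for a positive constant $c$. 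Since $\phi$, $\phi^\prime$ and the right-hand side are $I$-invariant, this descends to a Kazdan--Warner type equation on $\X$; as in Teleman's treatment underlying \thmref{thm:SWml}, together with the degree bookkeeping attached to $\tilde c_1(\hat E)$, $\tilde c_1(\hat E^\prime)$ and $[\hat\omega]$, it has a unique solution, which pulls back to the desired $I$-invariant $\psi$; this gives surjectivity. For injectivity, if two $I$-invariant solutions of \eqref{eq:WV} are $(\G^{\C})^I$-equivalent, then after a real gauge transformation the equivalence has the form $e^{-\psi}$ with $\psi$ $I$-invariant, and both $\psi$ and $\psi\equiv 0$ solve the same Kazdan--Warner equation on $\X$, so $\psi=0$ by uniqueness.

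The step I expect to be the main obstacle is the scalar PDE: unlike the single exponential in \eqref{eq:KW0}, the nonlinearity here carries two exponential terms of opposite sign coming from the two line bundles. Uniqueness is in fact immediate, since $\psi\mapsto e^{2\psi}|\phi|^2-e^{-2\psi}|\phi^\prime|^2$ is pointwise strictly increasing; existence is the genuine issue and requires the correct sign condition on the average of the right-hand side over $\X$, which is precisely the numerical input needed for \eqref{eq:WV} to be admissible (equivalently, for the choice of $\Sigma$ and $\xi$; compare \thmref{thm:SWml}), and this is the point at which one invokes Teleman's analysis after descending to $\X$. Everything else --- the slice computation, the descent of the equations to $\X$ using that $\iota$ is free, and the passage between real and complex gauge orbits --- is a routine $I$-equivariant transcription of the proof of \thmref{thm:corr} and of the corresponding results in \cite{Teleman}.
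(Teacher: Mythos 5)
Your proof follows essentially the same route as the paper's (which itself just says ``similar to \thmref{thm:corr}''): check well-definedness, establish a local homeomorphism via the slice/K\"ahler-identity computation, then get bijectivity by descending the scalar PDE to $\X$ and invoking Kazdan--Warner-type existence and uniqueness. The structure, the anti-diagonal gauge bookkeeping, and the reduction to a single scalar equation all match.

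The one substantive remark I would correct is your final paragraph. You worry that existence of the solution $\psi$ ``requires the correct sign condition on the average of the right-hand side over $\X$,'' by analogy with the one-exponential equation \eqref{eq:KW0}, where $\Theta>0$ is genuinely needed. For the two-exponential equation
\[
c\, i\Lambda_g\bar\partial\partial\psi + \tfrac12 e^{2\psi}|\phi|^2 - \tfrac12 e^{-2\psi}|\phi^\prime|^2 = t - i\Lambda_g(F_B - F_{B^\prime}),
\]
no such numerical hypothesis is needed: since $\phi\otimes\phi^\prime=\xi\not\equiv 0$ forces both $|\phi|^2$ and $|\phi^\prime|^2$ to be nontrivial, the nonlinearity is unbounded in both directions and a sub/supersolution argument gives existence for \emph{any} right-hand side; uniqueness follows from monotonicity, as you note. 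This unconditional solvability is precisely the point of Witten's perturbation, and it is what the references the paper cites (\cite{Biquard}, \cite[\S3.2]{Nicolaescu}) establish. So the step you flag as ``the main obstacle'' is actually where the two-bundle case is \emph{easier} than the single-bundle one, not harder. Everything else in your argument is a correct $I$-equivariant transcription of the proof of \thmref{thm:corr}.
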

\begin{proof}
The proof is similar to that of \thmref{thm:corr}.
In this case, we need to find an $I$-invariant function $\psi\colon X\to \R$ so that 
\[
i\Lambda_g\bar\partial\partial\psi + \frac12e^{2\psi}|\phi|^2 - \frac12e^{-2\psi}|\phi^\prime|^2 = t- i\Lambda_g(F_B-F_{B^\prime}).
\]
As before, this equation descends to $\X$, and it has a unique smooth solution.
(See \cite{Biquard} or \cite[\S3.2]{Nicolaescu}.) 
The rest of the proof is similar.
\end{proof}

\begin{Corollary}\label{cor:isom}
For $\eta$ as in \thmref{thm:SWml}, let $t=\pi\Lambda_g\eta^{1,1}-s_g/2$.
\begin{gather*}
\M(\X,\hat{\mathfrak{s}}_0\hat\otimes\hat{E})\cong \M(X;\mathfrak{s}_0\otimes E)^I\\\cong \mathcal{V}_t(E,E^{-1}\otimes K,C^{\vee},\eta^{2,0})^I \cong \mathscr{M}^s(E,E^{-1}\otimes K,\mathscr{K},\eta^{2,0})^I\cong \mathscr{D}_b(\Delta)^I
\end{gather*}
\end{Corollary}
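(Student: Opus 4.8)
The plan is to concatenate four homeomorphisms, three of which are essentially already in place; the only genuine point is to recognise which bundles and holomorphic data the mixed perturbation of \thmref{thm:SWml} selects. Set $E^\prime=E^{-1}\otimes K$, so that $N:=E\otimes E^\prime=K$; take the integrable connection $\Sigma=C^\vee$ on $N=K$ and let $\mathscr{K}$ be the induced holomorphic structure; take $\xi=\eta^{2,0}$, a nonzero $I$-invariant holomorphic section of $K$; and put $\Delta:=Z(\xi)=Z(\eta^{2,0})$, an $I$-invariant canonical divisor. With these identifications the objects $\mathcal{V}_t(E,E^\prime,\Sigma,\xi)^I$, $\mathscr{M}^s(E,E^\prime,\mathscr{K},\xi)^I$ and $\mathscr{D}_b(\Delta)^I$ are exactly those introduced earlier in this subsection. (A valid $\eta^{2,0}$ exists precisely when $(H^0(\mathscr{K})\setminus\{0\})^I\neq\emptyset$, which holds when $b_+^\ell\geq 2$.)

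First I would obtain $\M(\X,\hat{\mathfrak{s}}_0\hat\otimes\hat{E})\cong\M(X,\mathfrak{s}_0\otimes E)^I$ exactly as in \corref{cor:isom1}: because $\eta$ is $I$-invariant, the perturbed Seiberg-Witten equations \eqref{eq:SWKL} on $X$ are $I$-equivariant, and the perturbation descends to $\X$, so by \subsecref{subsec:symm} --- together with the fact that $\mathfrak{s}_0\otimes E$ (with $E=\pi^*\hat{E}$) is the canonical $\Spinc$-reduction of $\pi^*(\hat{\mathfrak{s}}_0\hat\otimes\hat{E})$ --- the $\Pin^-(2)$-monopole moduli space equals the $I$-invariant Seiberg-Witten moduli space. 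Next, for the second isomorphism I apply \thmref{thm:SWml}: for the perturbation $\eta=\eta^{2,0}\oplus\eta^{1,1}\oplus\overline{\eta^{2,0}}$ an $I$-invariant solution of \eqref{eq:SWKL} is the same as an $I$-invariant solution of \eqref{eq:SWcpl}. Introducing $\varphi=\bar\beta\in\Gamma(\bar{E}\otimes K)\cong\Gamma(E^\prime)$ and the Hermitian connection $B^\prime$ on $E^\prime$ with $B\otimes B^\prime=C^\vee$, and using Serre duality exactly as in the passage from \eqref{eq:SWcpl} to \eqref{eq:Vcpl}, the system turns --- after the routine rescaling of $(\alpha,\varphi)$ and absorbing the scalar-curvature term via $i\Lambda_g F_C=s_g$ --- into precisely the coupled vortex system \eqref{eq:WV} with data $E^\prime$, $\Sigma=C^\vee$, $\xi$ a constant multiple of $\eta^{2,0}$ and $t=\pi\Lambda_g\eta^{1,1}-s_g/2$. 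This gives $\M(X,\mathfrak{s}_0\otimes E)^I\cong\mathcal{V}_t(E,E^{-1}\otimes K,C^\vee,\eta^{2,0})^I$.

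For the third isomorphism, $\mathcal{V}_t(E,E^{-1}\otimes K,C^\vee,\eta^{2,0})^I\cong\mathscr{M}^s(E,E^{-1}\otimes K,\mathscr{K},\eta^{2,0})^I$, I would repeat the argument proving \thmref{thm:corr} in its coupled form: given an $I$-invariant holomorphic datum, one looks for an $I$-invariant real function $\psi$ whose $e^{-\psi}$-, $e^{\psi}$-gauge transforms produce a solution of \eqref{eq:WV}, which reduces to solving $i\Lambda_g\bar\partial\partial\psi+\tfrac12 e^{2\psi}|\phi|^2-\tfrac12 e^{-2\psi}|\phi^\prime|^2=t-i\Lambda_g(F_B-F_{B^\prime})$; this descends to $\X$, where it has a unique smooth solution by the theory of such Kazdan-Warner type equations (\cite{Biquard}; \cite[\S3.2]{Nicolaescu}), giving surjectivity and injectivity, while local homeomorphy is as in \propref{prop:lochomeo}. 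Finally, the fourth isomorphism $\mathscr{M}^s(E,E^{-1}\otimes K,\mathscr{K},\eta^{2,0})^I\cong\mathscr{D}_b(\Delta)^I$ is the $I$-invariant instance of the natural identification recorded earlier in this subsection: send a pair $((\delta,\phi),(\delta^\prime,\phi^\prime))$ with $\delta,\delta^\prime$ integrable, $\delta\otimes\delta^\prime=\delta_{\mathscr{K}}$ and $\phi\otimes\phi^\prime=\eta^{2,0}$ to $(Z(\phi),Z(\phi^\prime))$, which lies in $\theta^{-1}(\Delta)$ since $Z(\phi)+Z(\phi^\prime)=Z(\eta^{2,0})=\Delta$; bijectivity follows from the $I$-invariant correspondence $\mathscr{M}^s(F)^I\cong\mathscr{D}(\tilde{c}_1(F))^I$ applied to $F=E$ and $F=E^{-1}\otimes K$ (its local structure being governed by \propref{prop:maxdiv}). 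Composing the four homeomorphisms yields the chain.

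The main obstacle will be bookkeeping rather than anything conceptual. The delicate points are: (i) verifying that the mixed perturbation $\eta$, and in particular $\eta^{2,0}$, descends to the $\Pin^-(2)$-monopole side so that \subsecref{subsec:symm} applies verbatim --- this is exactly where one uses that $\eta^{2,0}$ is $I$-invariant, not merely holomorphic; and (ii) keeping track of the numerical constants (the factors $8\pi$ and $\tfrac18$ versus $\tfrac12$, and the sign in $F_{C^\vee}=-F_C$) through the passage from \eqref{eq:SWcpl} to \eqref{eq:Vcpl} to \eqref{eq:WV}, so that the asserted value of $t$ and the datum $\xi=\eta^{2,0}$ come out. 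Neither issue affects the structure of the argument.
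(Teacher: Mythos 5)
Your proposal is correct and takes essentially the same route the paper intends: the corollary is stated without a displayed proof precisely because it is obtained by concatenating the identification $\hat{\M}\cong\M^I$ from \subsecref{subsec:symm} (applied to $\hat{\mathfrak{s}}_0\hat\otimes\hat{E}$ with canonical reduction $\mathfrak{s}_0\otimes E$), the rewriting \eqref{eq:SWcpl}$\to$\eqref{eq:Vcpl} of \thmref{thm:SWml} matched against the coupled vortex system \eqref{eq:WV}, the immediately preceding theorem giving $\mathcal{V}_t(E,E',\Sigma,\xi)^I\cong\mathscr{M}^s(E,E',\mathscr{N},\xi)^I$, and the divisor identification $\mathscr{M}^s(E,E',\mathscr{N},\xi)^I\cong\mathscr{D}_b(\Delta)^I$ already set up in the subsection, with the specializations $E'=E^{-1}\otimes K$, $N=K$, $\Sigma=C^\vee$, $\mathscr{N}=\mathscr{K}$, $\xi$ proportional to $\eta^{2,0}$, $\Delta=Z(\eta^{2,0})$. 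You correctly flag that $\xi$ is a scalar multiple of $\eta^{2,0}$ (coming from the $-8\pi i$ in \eqref{eq:Vcpl}) and that only $Z(\xi)=\Delta$ matters, which is precisely the bookkeeping the paper leaves implicit.
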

%
%
\section{Computation and Examples}\label{sec:examples}
%
%
The purpose of this section is to compute $\Pin^-(2)$-monopole invariants of several concrete examples.
\subsection{Surfaces of general type}\label{subsec:general}
%
%
In this subsection, we prove \thmref{thm:general} on the surfaces of general type and give a series of examples of this type.
\begin{proof}[Proof of \thmref{thm:general}]
({\it Cf.} \cite{Morgan}, Theorem 7.4.1.)
The results on the canonical and anti-canonical $\Spincm$ structures follow from \thmref{thm:canonical} and \corref{cor:anticanonical}.

Since $X$ is minimal and of general type, $K_X^2>0$ and $K_X$ is numerically effective.
The latter condition implies $K_X\cdot\omega\geq 0$.
But if $K_X\cdot\omega=0$, then the Hodge index theorem implies $K_X^2\leq 0$.
Therefore $K_X\cdot\omega>0$.

Suppose that $\SW^{\Pin}_X(\hat{\mathfrak{s}})$ is nonzero for a $\Spincm$ structure $\hat{\mathfrak{s}}$.
Let $\mathfrak{s}$ be the canonical reduction of $\pi^*\hat{\mathfrak{s}}$.
Let $L$ be the determinant line bundle of $\mathfrak{s}$.
Then there exists a complex line bundle $E$ such that $\mathfrak{s}=\mathfrak{s}_0\otimes E$.
Note that $L=2E-K_X$.

Since $\SW^{\Pin}_X(\hat{\mathfrak{s}})\neq 0$, $d(\mathfrak{s})=2d(\hat{\mathfrak{s}})\geq 0$, and therefore $L^2\geq K_X^2>0$.
Then $c_1(L)^+$ is not a torsion class, and this implies that there is no reducible solution and $L\cdot\omega\neq 0$.

Suppose $L\cdot\omega>0$.
Since $\SW^{\Pin}_X(\hat{\mathfrak{s}})\neq 0$, there is an $I$-invariant holomorphic structure on $E$ and an $I$-invariant non-zero holomorphic section.
Hence $K_X\cdot E\geq 0$ because $K_X$ is numerically effective.
Since $E$ can be written as $E=(K_X+L)/2$, $K_X\cdot E\geq 0$ implies $K_X^2\geq -K_X\cdot L$.
Since $K_X\cdot\omega>0$ and $L\cdot\omega>0$, there is $t\geq 0$ such that 
\[
\omega\cdot(K_X+tL)=0.
\]
By the Hodge index theorem, we have
\[
0\geq (K_X+tL)^2 = K_X^2 +2tK_X\cdot L+t^2L^2 =:f(t).
\]
The quadratic function $f(t)$  attains its minimum at $t=-(K_X\cdot L)/L^2$ and the minimum is 
\[
K_X^2 -\frac{(K_X\cdot L)^2}{L^2}.
\]
Since $L^2\geq K_X^2\geq -K_X\cdot L$, this quantity is non-negative, and therefore equal to $0$.
Then we have $L^2= K_X^2= -K_X\cdot L$, and we see that $f(t)\leq 0$ only when $t=1$. 
Hence $(K_X+L)^2=0$ and $(K_X+L)\cdot\omega=0$.
By the Hodge index theorem, we have $K_X+L$ is a torsion class, and therefore $E$ is also a torsion class.
Since $E$ has an $I$-invariant non-zero holomorphic section, $E$ is an $I$-equivariant trivial bundle.
This means $\hat{\mathfrak{s}}$ is the canonical $\Spincm$ structure.

On the other hand, in the case when $L\cdot\omega>0$, $\SW^{\Pin}_X(\hat{\mathfrak{s}})\neq 0$ implies the existence of an $I$-invariant holomorphic structure on $E-K_X$, and an $I$-invariant holomorphic section on it.
Arguing similarly, we can prove that $K_X-L$ is a torsion class, and $\hat{\mathfrak{s}}$ is the anti-canonical $\Spincm$ structure.
\end{proof}

For a positive integer $k$, let $M_{4k}$ be the hypersurface in $\CP^3$ defined by real polynomials of degree $4k$, e.g., $\sum_{j=0}^3 x_j^{4k}$.
If $k>1$, then $M_{4k}$ is a minimal K\"{a}hler surfaces of general type. 
Define the antiholomorophic free involution $\iota$  by 
\[
[x_0,x_1,x_2,x_3]\mapsto [\overline{x}_1,-\overline{x}_0,\overline{x}_3,-\overline{x}_2].
\]
Let $\hat{M}_{4k}=M_{4k}/\iota$.
We check the assumptions.
\begin{Lemma}\label{lem:w2}
There is a lift of $w_2(\hat{M}_{4k})$ in the torsion part of $H^2(\hat{M}_{4k};\Z)$. 
\end{Lemma}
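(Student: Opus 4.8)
The plan is to reduce the statement to $\Z_2$-cohomology and to the structure of the torsion subgroup of $H^2(\hat{M}_{4k};\Z)$. First I would record the topology of the quotient: since $M_{4k}$ is a smooth hypersurface of complex dimension $2$ in $\CP^3$, the Lefschetz hyperplane theorem gives $\pi_1(M_{4k})=1$, and freeness of $\iota$ then yields $\pi_1(\hat{M}_{4k})\cong\Z/2$, hence $H_1(\hat{M}_{4k};\Z)\cong\Z/2$ and $H^1(\hat{M}_{4k};\Z_2)\cong\Z_2$, generated by $w:=w_1(\ell_{\R})$, the class classifying the double cover $\pi\colon M_{4k}\to\hat{M}_{4k}$. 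By the universal coefficient theorem the torsion subgroup of $H^2(\hat{M}_{4k};\Z)$ is $\Ext(\Z/2,\Z)\cong\Z/2$; using $\rho_2\circ\beta=\mathrm{Sq}^1$ and $\mathrm{Sq}^1 w=w^2$ (with $\beta$ the integral Bockstein and $\rho_2$ mod-$2$ reduction), its image under $\rho_2$ is exactly $\{0,w^2\}\subset H^2(\hat{M}_{4k};\Z_2)$.

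Next I would locate $w_2(\hat{M}_{4k})$ inside this two-element set. By adjunction $c_1(M_{4k})=(4-4k)h$ with $h$ the restricted hyperplane class; this is even, so $M_{4k}$ is spin and $w_2(M_{4k})=0$. Consequently $\pi^*w_2(\hat{M}_{4k})=w_2(M_{4k})=0$, so $w_2(\hat{M}_{4k})$ lies in $\ker\bigl(\pi^*\colon H^2(\hat{M}_{4k};\Z_2)\to H^2(M_{4k};\Z_2)\bigr)$. Feeding this into the Gysin sequence of the double cover $\pi$ (the unit sphere bundle of $\ell_{\R}$),
\[
H^1(\hat{M}_{4k};\Z_2)\xrightarrow{\ \cup\,w\ }H^2(\hat{M}_{4k};\Z_2)\xrightarrow{\ \pi^*\ }H^2(M_{4k};\Z_2),
\]
identifies $\ker\pi^*$ with $\im(\cup\,w)=\{0,w^2\}$, and therefore $w_2(\hat{M}_{4k})\in\{0,w^2\}$.

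Combining the two, $w_2(\hat{M}_{4k})$ lies in the image of $\rho_2$ restricted to $H^2(\hat{M}_{4k};\Z)_{\mathrm{tors}}$: if $w_2(\hat{M}_{4k})=0$ take the lift $0$, and if $w_2(\hat{M}_{4k})=w^2$ take the $2$-torsion lift $\beta(w)$; in both cases this is a torsion integral lift, which is the assertion. I do not expect a genuine obstacle here; the only points needing care are orienting the Gysin sequence so that $\ker\pi^*$ in degree $2$ is precisely cup product with $w_1(\ell_{\R})$ out of $H^1$, and invoking the two standard facts about $\mathrm{Sq}^1$. Everything else rests on $M_{4k}$ being simply connected and spin.
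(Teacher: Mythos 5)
Your argument is correct and self-contained. The paper itself does not give a proof: it cites Habegger's note on $\hat{M}_4$ (the Enriques surface) and asserts the argument there extends to general $k$. Your argument fills in precisely the kind of computation one would expect that reference to contain: use $\pi_1(M_{4k})=1$ (Lefschetz) plus freeness of $\iota$ to get $\pi_1(\hat{M}_{4k})=\Z/2$ and hence $H^1(\hat{M}_{4k};\Z_2)=\Z_2\langle w\rangle$ and $\mathrm{Tors}\,H^2(\hat{M}_{4k};\Z)\cong\Z/2$ by UCT; identify the generator of that torsion as $\beta(w)$ (Bockstein is injective since $H^1(\hat{M}_{4k};\Z)=0$) with $\rho_2\beta(w)=\mathrm{Sq}^1w=w^2$; and locate $w_2(\hat{M}_{4k})$ in $\ker\pi^*=\im(\cup\,w)=\{0,w^2\}$ via the Gysin sequence of the double cover together with $w_2(M_{4k})=0$ from adjunction. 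Each step is sound, and in either of the two resulting cases you exhibit a torsion integral lift. So your proposal is a valid, explicit replacement for the citation; because the paper delegates its proof entirely to Habegger, one cannot say whether your route coincides with the cited one, but the tools (Gysin, UCT, Bockstein/$\mathrm{Sq}^1$) are the natural ones and your write-up is complete on its own.

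One small stylistic note: your phrase that the image of $\rho_2$ on the torsion ``is exactly $\{0,w^2\}$'' tacitly allows the degenerate possibility $w^2=0$ (in which case the set collapses to $\{0\}$); since you never need to rule this out for the lemma as stated, the argument is unaffected, but it would read more cleanly to say the image is $\{0,\rho_2\beta(w)\}=\{0,w^2\}$ and to treat the two cases $w_2=0$ and $w_2=w^2$ as you already do in the final paragraph.
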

\begin{proof}
See \cite{Habegger}. 
The proof in \cite{Habegger} is on $\hat{M}_4$, but it works well for  $\hat{M}_{4k}$.
\end{proof}
\begin{Proposition}\label{prop:w2}
$w_2(\hat{K})=0$ and $w_2(\hat{M}_{4k}) = w_1(\ell_{\R})^2$.  
$\pi^*\colon H^1(\hat{M}_{4k};\Z_2)\to H^1(M_{4k};\Z_2)$ is surjective.
\end{Proposition}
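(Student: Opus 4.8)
The plan is to verify the three assertions for $\hat M_{4k}=M_{4k}/\iota$ by combining the adjunction formula on the complex surface $M_{4k}\subset\CP^3$ with the integral lift of $w_2$ supplied by \lemref{lem:w2}. First I would compute $w_2(\hat K)$. Recall that $\hat K$ is the quotient $K_{M_{4k}}/\iota$ of the canonical bundle by the induced anti-linear involution, so $w_2(\hat K)$ is the mod-$2$ reduction of an $\ell$-twisted Euler class $\tilde c_1(\hat K)\in H^2(\hat M_{4k};\ell)$, and $\pi^*\hat K=K_{M_{4k}}$ as real bundles. By adjunction $K_{M_{4k}}=\mathscr{O}_{M_{4k}}(4k-4)$, which is a power of the hyperplane bundle restricted from $\CP^3$; since $\iota$ covers complex conjugation on $\CP^3$, this bundle and its involution descend to $\hat M_{4k}$ as a bundle pulled back (via the twisting by $\ell_\R$) from the real projective quotient. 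The key point is that $w_2(\hat K)$ lifts to an integral class downstairs precisely because $c_1(K_{M_{4k}})$ does and the construction is equivariant; hence $w_2(\hat K)$ lies in the image of reduction from $H^2(\hat M_{4k};\Z)$. Using \lemref{lem:w2}, $w_2(\hat M_{4k})$ also has an integral (in fact torsion) lift, so the defining condition (1) of \thmref{thm:canonical}, namely $w_2(\hat M_{4k})+w_2(\hat K)+w_1(\ell_\R)^2=0$, forces the cleftover to be $w_1(\ell_\R)^2$; comparing the two shows $w_2(\hat K)=0$ and $w_2(\hat M_{4k})=w_1(\ell_\R)^2$. More concretely, I would argue $w_2(\hat K)=0$ directly: $4k-4$ is divisible by $4$, so $K_{M_{4k}}=L^{\otimes 2}$ for $L=\mathscr{O}_{M_{4k}}(2k-2)$ with its induced involution, hence $\hat K$ is a ``twisted square'' $\hat L\hat\otimes\hat L$ and $w_2$ of such a bundle vanishes (its $\ell$-twisted $c_1$ is $2\tilde c_1(\hat L)$, which reduces to $0$ mod $2$). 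Then $w_2(\hat M_{4k})=w_1(\ell_\R)^2$ follows from the relation $w_2(M_{4k})=0$ (true because $M_{4k}$ is spin when $4k$ is even — indeed $c_1(M_{4k})=(4-4k)h$ is even) together with the Whitney-sum/transfer computation relating $w_2(\pi^*\hat M_{4k})$ to $w_2(\hat M_{4k})$ and $w_1(\ell_\R)$.

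Next, for the surjectivity of $\pi^*\colon H^1(\hat M_{4k};\Z_2)\to H^1(M_{4k};\Z_2)$: since $M_{4k}$ is a smooth hypersurface in $\CP^3$ of degree $\geq 1$, by the Lefschetz hyperplane theorem $M_{4k}$ is simply connected, so $H^1(M_{4k};\Z_2)=0$ and the map is trivially surjective. This is immediate and requires no work.

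I expect the main obstacle to be the careful bookkeeping in the first part — making the identification $\hat K\cong\hat L\hat\otimes\hat L$ precise at the level of involutions (one must check the anti-linear involution on $K_{M_{4k}}$ induced by $\iota$ is the square of the one on $L$, up to an $\ell$-twist that does not affect $w_2$), and correctly running the Whitney-sum relation $w(\pi^*\xi)=w(\xi)|_{\text{pullback}}$ together with the Gysin/transfer identity $w_2(\hat M_{4k})+\text{(correction in }w_1(\ell_\R))=0$ to extract the two stated equalities rather than just their sum. The subtlety is that condition (1) of \thmref{thm:canonical} only gives one equation among $w_2(\hat M_{4k})$, $w_2(\hat K)$, $w_1(\ell_\R)^2$, so I genuinely need the independent input $w_2(\hat K)=0$ (from the square structure) to split it; everything else is routine characteristic-class manipulation and an appeal to \lemref{lem:w2} and \cite{Habegger} for the integral lift of $w_2(\hat M_{4k})$.
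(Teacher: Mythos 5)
The part of your argument that works is the computation $w_2(\hat K)=0$ (from $K_{M_{4k}}=(4k-4)H$ being a square, so $\tilde c_1(\hat K)$ is divisible by $2$) and the surjectivity of $\pi^*$ (trivial since $M_{4k}$ is simply connected by Lefschetz). Both match the paper.

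However, there is a genuine gap in your treatment of $w_2(\hat M_{4k})=w_1(\ell_\R)^2$. Your first paragraph is circular: you invoke condition (1) of \thmref{thm:canonical}, $w_2(\hat M_{4k})+w_2(\hat K)+w_1(\ell_\R)^2=0$, as if it were known; but \propref{prop:w2} is precisely the verification of that condition for $\hat M_{4k}$, so you cannot assume it. Your ``more concrete'' Gysin/transfer argument does not close the gap either. What the Gysin sequence of the double cover gives is
\[
H^1(\hat M_{4k};\Z_2)\xrightarrow{\ \cup w_1(\ell_\R)\ } H^2(\hat M_{4k};\Z_2)\xrightarrow{\ \pi^*\ } H^2(M_{4k};\Z_2),
\]
so, since $w_2(M_{4k})=\pi^*w_2(\hat M_{4k})=0$ and $H^1(\hat M_{4k};\Z_2)=\Z_2\langle w_1(\ell_\R)\rangle$, you only learn that $w_2(\hat M_{4k})\in\{0,\,w_1(\ell_\R)^2\}$. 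You have no argument to decide which of the two it is; and this matters, because $w_1(\ell_\R)^2\neq 0$ here (for $k=1$, $\hat M_4$ is an Enriques surface, which is not spin, with even intersection form and signature $-8$ --- this is exactly Habegger's example). The paper resolves the ambiguity by invoking \lemref{lem:w2} (the Habegger-type statement that $w_2(\hat M_{4k})$ lifts to the torsion part of $H^2(\hat M_{4k};\Z)$) together with \cite[\S1, Remark 3(2)]{N1} to write $w_2(\hat M_{4k})=\alpha\cup\alpha$ for some $\alpha\in H^1(\hat M_{4k};\ell)$, and then identifies $\alpha$ with $w_1(\ell_\R)$ using $\pi_1(\hat M_{4k})=\Z_2$. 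You cite \lemref{lem:w2} only in the circular paragraph, not in the ``concrete'' version, so that crucial input is missing from your actual argument.

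A smaller imprecision: the involution $\iota\colon[x_0:x_1:x_2:x_3]\mapsto[\bar x_1:-\bar x_0:\bar x_3:-\bar x_2]$ is not complex conjugation on $\CP^3$ --- it is chosen precisely so as to be \emph{free} --- so the picture of $\hat K$ being ``pulled back from the real projective quotient'' is not accurate, though it does not affect the part of the argument that works.
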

\begin{proof}
The fact that $w_2(\hat{K})=0$ follows from that the canonical bundle  $K$ of $M_{4k}$  is given by
\[
K  = (4k-4)H,
\]
where $H$ is the hyperplane section.
By \lemref{lem:w2} and \cite[\S1, Remark 3(2)]{N1}, there exists a class $\alpha\in H^1(\hat{M}_{4k};\ell)$ such that $ w_2(\hat{M}_{4k})=\alpha\cup\alpha$. 
Since $\pi_1(M_{4k})=1$, $\pi_1(\hat{M}_{4k})=\Z/2$, $\alpha$  must be $ w_1(\ell_{\R})$ and $\pi^*$ is surjective. 
\end{proof}
\begin{Corollary}
There exists a canonical $\Spincm$ structure $\mathfrak{s}_0$ on $M_{4k}\to \hat{M}_{4k}$.
\end{Corollary}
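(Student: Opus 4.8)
The plan is to observe that this is an immediate consequence of \corref{cor:canonical}, once the two hypotheses (1) and (2) of \thmref{thm:canonical} are verified for the double cover $\pi\colon M_{4k}\to\hat{M}_{4k}$; that verification is exactly the content of \propref{prop:w2}.

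First I would dispose of hypothesis (1), the vanishing $w_2(\hat{M}_{4k})+w_2(\hat{K})+w_1(\ell_{\R})^2=0$ in $H^2(\hat{M}_{4k};\Z_2)$. By \propref{prop:w2} one has $w_2(\hat{K})=0$ and $w_2(\hat{M}_{4k})=w_1(\ell_{\R})^2$, so the left-hand side equals $w_1(\ell_{\R})^2+w_1(\ell_{\R})^2$, which is zero since we are working with $\Z_2$ coefficients. Hypothesis (2), the surjectivity of $\pi^*\colon H^1(\hat{M}_{4k};\Z_2)\to H^1(M_{4k};\Z_2)$, is literally the last assertion of \propref{prop:w2}; alternatively one can note that $M_{4k}$, being a smooth hypersurface in $\CP^3$, is simply connected, hence $H^1(M_{4k};\Z_2)=0$ and the map is trivially onto.

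Having checked (1) and (2), I would then simply apply \corref{cor:canonical} to conclude that there exists a (unique) canonical $\Spincm$ structure $\mathfrak{s}_0$ on $M_{4k}\to\hat{M}_{4k}$. I do not expect any serious obstacle in this final step: the genuine work has already been done in \lemref{lem:w2} (the existence of a torsion integral lift of $w_2(\hat{M}_{4k})$, borrowed from \cite{Habegger}) and in \propref{prop:w2} (deducing $w_2(\hat{M}_{4k})=w_1(\ell_{\R})^2$ from that lift together with $\pi_1(M_{4k})=1$), so the corollary is just a bookkeeping application of the earlier general existence result \corref{cor:canonical}.
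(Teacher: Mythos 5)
Your proof is correct and is exactly the (implicit) argument the paper intends: the corollary is stated immediately after \propref{prop:w2} precisely because that proposition furnishes the two hypotheses of \thmref{thm:canonical}, and then \corref{cor:canonical} gives the conclusion. Your alternative observation that $H^1(M_{4k};\Z_2)=0$ by simple connectivity is also consistent with the paper, which notes $\pi_1(M_{4k})=1$ in the proof of \propref{prop:w2}.
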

\begin{Proposition}
The moduli space for $(\hat{M}_{4k},\hat{\mathfrak{s}}_0)$ is orientable and its virtual dimension is $0$. 
Therefore the $\Pin^-(2)$-monopole invariant of $(\hat{M}_{4k},\hat{\mathfrak{s}}_0)$ can be defined as a $\Z$-valued invariant.
\end{Proposition}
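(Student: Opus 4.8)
The plan is to prove the two assertions---that $\dim$ of the moduli space is $0$ and that it is orientable---separately; the first is essentially formal, while the second is where the K\"ahler geometry of $M_{4k}$ really enters. For the virtual dimension I would first check that the given involution on $\CP^3$ is fixed-point free (the fixed-point equations force $|\lambda|^2=-1$ for the scaling factor $\lambda$), so $\iota$ is free on $M_{4k}$ and $(M_{4k},\omega,\iota)$ is a real symplectic $4$-manifold with empty real part; the hypotheses of \thmref{thm:canonical} hold by \propref{prop:w2}, and $b_+^\ell=1+p_g(M_{4k})\geq 2$. Then \thmref{thm:canonical} gives $\SW^{\Pin}(\hat{M}_{4k},\hat{\mathfrak{s}}_0)=\pm 1\neq 0$, so \thmref{thm:more} applies to $\hat{\mathfrak{s}}_0$ and yields $d(\hat{\mathfrak{s}}_0)=0$. (More directly: $d(\hat{\mathfrak{s}}_0)=\tfrac12 d(\mathfrak{s}_0)$ for the canonical $\Spinc$ structure $\mathfrak{s}_0$ on the surface $M_{4k}$, and $d(\mathfrak{s}_0)=\tfrac14(c_1(K)^2-2\chi-3\sigma)=0$ since $c_1(K)^2=c_1(M_{4k})^2=2\chi+3\sigma$.)

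For orientability I would exploit \subsecref{subsec:symm}: the moduli space of $(\hat{M}_{4k},\hat{\mathfrak{s}}_0)$ is the $I$-invariant Seiberg--Witten moduli space of $(M_{4k},\mathfrak{s}_0)$, and the pertinent deformation operator is the $I$-invariant part of the Seiberg--Witten deformation operator $\mathcal{D}$ on $M_{4k}$. On the K\"ahler surface $M_{4k}$ one may equip $\mathcal{D}$ with a natural $\C$-linear structure---the complex \eqref{eq:diff} folded up, together with the connection-independent $\bar{\partial}$ and gauge-fixing pieces---and, since $\iota$ is antiholomorphic, $I=\overline{\iota^*(\cdot)}$ is \emph{conjugate}-linear and commutes with $\mathcal{D}$. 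Hence $\mathcal{D}=\mathcal{D}^I\otimes_\R\C$: the operator $\mathcal{D}$ is the complexification of the real operator $\mathcal{D}^I$. Consequently the real orientation line bundle $\lambda$ of $\mathcal{D}^I$ over the $\Pin^-(2)$-monopole configuration space $\hat{\mathcal B}^{*}$ satisfies $\lambda\otimes_\R\C\cong\mathbb{L}_{\det}|_{\hat{\mathcal B}^{*}}$, where $\mathbb{L}_{\det}$ is the \emph{complex} determinant-of-index line bundle of $\mathcal{D}$ over the full Seiberg--Witten configuration space $\mathcal B^{*}(M_{4k})$.

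The remainder is a characteristic-class computation. Since $M_{4k}$ is simply connected, $\mathcal B^{*}(M_{4k})\simeq\CP^\infty$ and $\hat{\mathcal B}^{*}\simeq\RP^\infty$ (the gauge groups being $\G\simeq\U(1)$ and $\hat{\G}\simeq\Z_2$), the natural inclusion being, up to homotopy, $B\Z_2\hookrightarrow B\U(1)$. The Dirac part of $\mathcal{D}$ contributes $c_1=\pm\chi(\mathscr{O}_{M_{4k}})\,\mu$ to $\mathbb{L}_{\det}$, where $\mu$ generates $H^2(\CP^\infty;\Z)$: indeed $\G\simeq\U(1)$ acts on the Dirac index with weight $-\ind_\C=-\chi(\mathscr{O}_{M_{4k}})$ (equivalently this is the family index theorem), while the remaining part of the complex is a trivial family and contributes nothing. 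Restricting to $\RP^\infty$ and using $c_1(\lambda\otimes_\R\C)=\beta(w_1(\lambda))$ with the Bockstein isomorphism $\beta\colon H^1(\RP^\infty;\Z_2)\xrightarrow{\ \cong\ }H^2(\RP^\infty;\Z)$, one obtains $w_1(\lambda)\equiv\chi(\mathscr{O}_{M_{4k}})\bmod 2$. Now $\chi(\mathscr{O}_{M_{4k}})=1+p_g(M_{4k})=1+\binom{4k-1}{3}$, and $\binom{4k-1}{3}$ is odd for every $k\geq 1$ (by Lucas' theorem: $3=11_2$ and the last two binary digits of $4k-1$ are $11$, so no carry occurs); hence $\chi(\mathscr{O}_{M_{4k}})$ is even, $\lambda$ is trivial, and the moduli space is orientable.

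Given the two statements, the final assertion follows from the Remark after \thmref{thm:canonical}: a compact $0$-dimensional orientable $\Pin^-(2)$-monopole moduli space carries a well-defined signed point count, so $\SW^{\Pin}(\hat{M}_{4k},\hat{\mathfrak{s}}_0)\in\Z$ is defined. The step I expect to be the real obstacle is the orientability---fixing the homotopy types of $\mathcal B^{*}$ and $\hat{\mathcal B}^{*}$ and the inclusion between them, justifying $c_1(\mathbb{L}_{\det})=\pm\chi(\mathscr{O})\mu$, and arranging the $\C$-linearization of $\mathcal{D}$ to be $I$-equivariant; the conceptual point that makes it tractable is that conjugate-linearity of $I$ converts the real orientability obstruction into the parity of a complex first Chern class, which here reduces to the parity of $p_g(M_{4k})=\binom{4k-1}{3}$.
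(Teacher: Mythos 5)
Your proposal is correct in its conclusions and reaches them by a genuinely different route from the paper. For orientability, the paper does \emph{not} re-derive the criterion from scratch: it cites \cite[Proposition~2.15]{N2}, which says (when $b_1^\ell=0$) that the $\Pin^-(2)$-monopole moduli space is orientable iff the Dirac index $\ind D$ of the $\Spincm$ structure is even. The paper then combines $d(\hat{\mathfrak{s}}_0)=\tfrac12 d(\mathfrak{s}_0)=0$ with the dimension formula $d(\hat{\mathfrak{s}}_0)=\ind D-(b_0^\ell-b_1^\ell+b_+^\ell)$ and $b_0^\ell=b_1^\ell=0$ to get $\ind D=b_+^\ell=\tfrac12(1+b_+(M_{4k}))$, which is manifestly even from the explicit degree formula. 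You instead reconstruct (a K\"ahler-specific form of) the orientability criterion: you complexify the deformation operator, identify $\hat{\mathcal B}^*\hookrightarrow\mathcal B^*$ with $\RP^\infty\hookrightarrow\CP^\infty$, compute $c_1(\mathbb L_{\det})=\pm\chi(\mathscr O)\mu$, and invoke the Bockstein to get $w_1(\lambda)\equiv\chi(\mathscr O)\bmod 2$; parity of $\chi(\mathscr O)=1+\binom{4k-1}{3}$ is then settled by Lucas. Since $\ind D=b_+^\ell=1+p_g=\chi(\mathscr O)$, the two parity checks are literally the same, arrived at differently.

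Two points you should be aware of. First, the steps ``$\mathcal D$ has a natural $\G$-equivariant $\C$-linear structure on a K\"ahler surface'' and ``$I$ is conjugate-linear, so $\mathcal D=\mathcal D^I\otimes_\R\C$'' are exactly where the work is hidden: the SW linearization as written is real (the curvature equation lands in $i\Omega^2_+\oplus i\Omega^0$), and making this $\C$-linear requires the K\"ahler identities and the decomposition $i\Omega^2_+\cong i\R\omega\oplus\Omega^{0,2}$ folded together with the gauge-fixing $i\Omega^0$; this is standard but is precisely the content the proposal waves at. Second, $\hat{\mathcal B}^*\simeq\RP^\infty$ requires $\hat{\G}\simeq\Z_2$, which in turn uses $\pi_0(\hat\G)\cong H^1(\X;\ell)=0$ for $\hat M_{4k}$; worth stating. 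Neither is a gap in the sense of a false step, but both are nontrivial inputs that the paper sidesteps by citing \cite{N2}. Your extra observation that the given involution on $\CP^3$ is fixed-point free (via $|\lambda|^2=-1$) is correct and is implicit in the paper's setup.
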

\begin{proof}
Note $b_1^\ell(\hat{M}_{4k})=0$.
In order to prove the orientability of the moduli space, it suffices to prove the Dirac index, $\ind D$, of $\hat{\mathfrak{s}}_0$ is even by \cite[Proposition 2.15]{N2}. 
Let $d(\hat{\mathfrak{s}}_0)$ be the virtual dimension of the moduli space.
Since $\iota$ is free, we have
$d(\hat{\mathfrak{s}}_0) = \frac12d(\mathfrak{s}_0) = 0$.
Then 
\[
0=d(\hat{\mathfrak{s}}_0)=\ind D - (b_0^\ell-b_1^\ell +b_+^\ell).
\]
Since $\ell$ is nontrivial, $b_0^\ell=0$.
Therefore
\[
\ind D=b_+^\ell = \frac12(1+b_+(M_{4k})) = \frac12\left(\frac{4k}{3}\{16k^2 -24k +11\}\right).
\]
(For the calculation of $b_+(M_{4k})$, see e.g.\cite[Example 4.27]{MS}.)
Therefore $\ind D$ is even.
\end{proof}
\begin{Remark}
Note that $\hat{M}_{4}$ is diffeomorphic to an Enriques surface.
If $k>1$, then all of the ordinary Seiberg--Witten invariants of $\hat{M}_{4k}$ are zero by a theorem due to S.~Wang~\cite{wang95van}.
\end{Remark}
%
%
\subsection{Elliptic surfaces}\label{subsec:elliptic}
%
%
In this subsection, we compute the $\Pin^-(2)$-monopole invariants of the quotient manifolds of  some elliptic surfaces.
First, we construct anti-holomorphic involutions on certain elliptic surfaces over $\CP^1$.

A method to construct elliptic fibrations by using hyperelliptic involutions is given in Gompf--Stipsicz's book \cite{GS}, \S3.2.
Let $\Sigma_k$ be a Riemannian surface of genus $k$, and $h_k\colon\Sigma_k\to\Sigma_k$ be a hyperelliptic involution.
Take the diagonal $\Z_2$-action $h_k\times h_1$ on $\Sigma_k\times \Sigma_1$.
Dividing by the $\Z_2$-action $h_k\times h_1$, we obtain the quotient $(\Sigma_k\times\Sigma_1)/\Z_2$ with $4(2k+2)$ singular points.
Resolving the singular points produces a complex manifold $X(k+1)$.
Dividing the projection $\mathrm{pr}_1\colon\Sigma_k\times\Sigma_1\to \Sigma_k$ and extending it to the resolution, we obtain the elliptic fibration $\varpi\colon X(k+1)\to\CP^1$.
It is well-known that $X(n)$ is diffeomorphic to $E(n)$, the fiber sum of $E(1)=\CP^2\#9\overline{\CP}^2$.

We construct an anti-holomorphic free involution on $X(2n)$.
Take the antipodal map $\iota_0$ on $\CP^1=\C\cup\{\infty\}$ defined by $z\mapsto z^*:=-1/\bar{z}$. 
Choose $k$ distinct points $a_1,\ldots,a_k$ on $\CP^1$ satisfying $0<|a_i|<1$.
Let $\Sigma_k$ be the hyperelliptic curve defined by the equation
\[
w^2 = z(z-a_1)(z-a_1^*)\cdots(z-a_k)(z-a_k^*),
\]
and $\Sigma_k\to\CP^1$ the associated double covering branched at $a_1$, $a_1^*,\ldots, a_k$, $a_k^*$, $0$, $\infty$. 
Then the antipodal map $\iota_0$ on the base $\CP^1$ can be lifted to an anti-holomorphic map $\sigma_k$ on $\Sigma_k$ with order $2$ if $k$ is odd, and with order $4$ if $k$ is even.

Suppose $k=2n-1$ for a positive integer $n$.
Take the diagonal action $\sigma_{2n-1}\times \sigma_1\colon \Sigma_{2n-1}\times\Sigma_1\to \Sigma_{2n-1}\times\Sigma_1$.
Then $\sigma_{2n-1}\times \sigma_1$ descends to a free involution on the quotient $(\Sigma_{2n-1}\times\Sigma_1)/\Z_2$. 
Furthermore we can easily extend it to an anti-holomorphic free involution $\iota$ on $X(2n)$ which covers the antipodal map $\iota_0$ on the base $\CP^1$.
\begin{Proposition}
The surface $X(2n)$ admits a K\"{a}hler form $\omega$ such that $\iota^*\omega=-\omega$.
\end{Proposition}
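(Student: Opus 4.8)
The plan is to obtain $\omega$ by averaging, once we know that $X(2n)$ carries \emph{some} Kähler form. First I would record that $X(2n)$ is a smooth projective surface. It is built by algebraic operations from the product of the projective curves $\Sigma_{2n-1}$ and $\Sigma_1$: one takes the quotient $Y=(\Sigma_{2n-1}\times\Sigma_1)/\Z_2$ under the finite group generated by the holomorphic involution $h_{2n-1}\times h_1$, whose $4(2(2n-1)+2)$ fixed points give ordinary (du Val, type $A_1$) surface singularities of the normal projective surface $Y$, and then resolves them; the minimal resolution of a normal projective surface is again projective (it is a Moishezon surface of dimension $2$), so $X(2n)$ is projective, in particular Kähler. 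Equivalently one may simply invoke that the construction produces a Kähler elliptic surface biholomorphic to $E(2n)$. Fix any Kähler form $\omega_0$ on $X(2n)$, with associated $J$-Hermitian metric $g_0$, where $J$ denotes the complex structure of $X(2n)$.

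Next I would use that $\iota$ is anti-holomorphic with respect to $J$, i.e. $\iota_*\circ J=-J\circ\iota_*$. Hence $\iota^*$ sends real closed $(1,1)$-forms to real closed $(1,1)$-forms, and it carries the Kähler cone to its negative: from $\omega_0(\,\cdot\,,\,\cdot\,)=g_0(J\,\cdot\,,\,\cdot\,)$ one computes $(\iota^*\omega_0)(\,\cdot\,,\,\cdot\,)=-(\iota^*g_0)(J\,\cdot\,,\,\cdot\,)$, and $\iota^*g_0$ is again a $J$-Hermitian metric, so $-\iota^*\omega_0$ is precisely its Kähler form. Therefore
\[
\omega:=\omega_0-\iota^*\omega_0
\]
is a sum of two Kähler forms, hence itself a closed positive real $(1,1)$-form, i.e. a Kähler form for $J$. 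Since $\iota$ is an involution, $\iota^*\omega=\iota^*\omega_0-(\iota^2)^*\omega_0=\iota^*\omega_0-\omega_0=-\omega$, as claimed. As $\omega$ is Kähler for $J$ while $\iota$ is anti-holomorphic for $J$ and free, $(X(2n),\omega,\iota)$ is a compact Kähler surface with anti-holomorphic free involution satisfying $\iota^*\omega=-\omega$, which is the input needed in \secref{sec:realKL}.

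The only point requiring genuine attention is the Kählerness (projectivity) of $X(2n)$; the averaging step is formal. One could bypass even that by constructing $\omega$ by hand — choose on each factor a Kähler form invariant under the hyperelliptic involution and anti-invariant under $\sigma_{2n-1}$, resp. $\sigma_1$ (using that the hyperelliptic involution is central, so $\langle h_k,\sigma_k\rangle$ is finite and one can average), take the product form on $\Sigma_{2n-1}\times\Sigma_1$, descend it to $Y$, and then modify it near the exceptional $(-2)$-curves of the resolution so as to make it positive there while keeping it $\iota$-anti-invariant. I expect that direct route to be the more cumbersome one: the local patching near the exceptional curves is exactly what the global averaging argument avoids, so I would present the averaging proof.
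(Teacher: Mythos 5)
Your proof is correct, and it takes a genuinely different route from the paper's. The paper constructs an $\iota$-anti-invariant Kähler form directly: it builds a Kähler form $\omega_0$ on $\Sigma_{2n-1}\times\Sigma_1$ that is $(h_{2n-1}\times h_1)$-invariant and $(\sigma_{2n-1}\times\sigma_1)$-anti-invariant, pushes it down to a singular Kähler form on the $\Z_2$-quotient, and then invokes Fujiki's theorem on Kähler-ness of resolutions of normal Kähler surfaces to obtain a Kähler form on $X(2n)$, remarking that one can arrange $\iota^*\omega=-\omega$. In other words, the paper takes exactly the ``by hand'' route you describe in your final paragraph, with Fujiki doing the work of the local patching near the exceptional curves. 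Your averaging argument is cleaner once Kähler-ness is granted: the key observation that $-\iota^*\omega_0$ is again a Kähler form (because $\iota$ is anti-holomorphic) makes $\omega_0-\iota^*\omega_0$ a sum of Kähler forms, hence Kähler, and the anti-invariance is then formal. Your justification of Kähler-ness via projectivity of the minimal resolution of the normal projective quotient surface is sound and avoids Fujiki entirely. One small imprecision: you write that $X(2n)$ is ``biholomorphic to $E(2n)$,'' but the paper only asserts a diffeomorphism (and $E(n)$ is a diffeomorphism type, not a fixed complex surface), so the parenthetical aside should say ``diffeomorphic''; this does not affect your main argument since the projectivity is justified independently.
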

\begin{proof}
We can easily construct a K\"{a}hler form $\omega_0$ on $\Sigma_k\times\Sigma_1$ such that $(h_k\times h_1)^*\omega_0=\omega_0$ and $(\sigma_k\times \sigma_1)^*\omega_0=-\omega_0$.
Then $\omega_0$ induces a singular K\"{a}hler form $\hat{\omega}_0$ on $(\Sigma_k\times\Sigma_1)/\Z_2$.
By the results due to Fujiki \cite{Fujiki}, we can obtain a K\"{a}hler form $\omega$ on $X(2n)$. 
Moreover we can choose $\omega$ such that $\iota^*\omega=-\omega$.
\end{proof}

Let $\X(2n)=X(2n)/\iota$. 
By construction, $\varpi\colon X(2n)\to\CP^1$ descends to $\hat{\varpi}\colon \X(2n)\to\RP^2$. 
The general fiber of $\hat{\varpi}$ is a torus. 
Note that $X(2)=E(2)$ is a $K3$ surface. 
\begin{Proposition}
The quotient manifold $\X(2)=X(2)/\iota$ is diffeomorphic to an Enriques surface.
\end{Proposition}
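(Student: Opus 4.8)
The plan is to recognize $\X(2)$ among closed oriented $4$-manifolds by its standard diffeomorphism invariants together with the fact that its universal cover is a K3 surface. Since $X(2)=E(2)$ is a K3 surface and $\iota$ is a free involution which is orientation-preserving (any anti-holomorphic self-map of a complex surface preserves orientation), the quotient $\X(2)$ is a closed oriented smooth $4$-manifold with $\pi_1(\X(2))\cong\Z/2$ whose universal cover is $X(2)$. I would then show that $\X(2)$ has the invariants of an Enriques surface and invoke a classification statement to conclude.

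First I would compute the homology. As $H^1(X(2);\Q)=0$ we get $b_1(\X(2))=0$, and freeness of $\iota$ gives $\chi(\X(2))=\tfrac12\chi(X(2))=12$. For the signature, use that $H^2(\X(2);\R)\cong H^2(X(2);\R)^{\iota^*}$ and that the maximal positive subspace $H^+(X(2);\R)$ is spanned by the K\"ahler class $\omega$ and the real and imaginary parts of a holomorphic $2$-form $\Omega$. Since $\iota^*\omega=-\omega$ and $\iota^*$ is anti-holomorphic, $\iota^*$ acts by $-1$ on $\R\omega$ and as a reflection (eigenvalues $+1$ and $-1$) on the real $2$-plane $\langle\operatorname{Re}\Omega,\operatorname{Im}\Omega\rangle$; hence $b_+(\X(2))=\dim H^+(X(2);\R)^{\iota^*}=1$, so $b_-(\X(2))=9$ and $\sigma(\X(2))=-8$. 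Since $-8\not\equiv 0\pmod{16}$, Rokhlin's theorem shows $\X(2)$ is not spin. Thus $\X(2)$ is a closed oriented smooth $4$-manifold with $\pi_1\cong\Z/2$, non-spin, $b_+=1$, $b_-=9$, and universal cover a K3 surface: precisely the invariants of an Enriques surface.

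Next I would invoke the classification of closed oriented topological $4$-manifolds with fundamental group $\Z/2$ (Hambleton--Kreck): in the appropriate $w_2$-type, such a manifold whose universal cover carries the intersection form of a K3 surface is homeomorphic to the Enriques surface. To upgrade this to a diffeomorphism I would use deformation invariance of the construction: a K3 surface equipped with a free anti-holomorphic involution (a ``real K3 surface with empty real part'') lies in a single deformation class by the classification of real structures on K3 surfaces (Nikulin, Kharlamov), so $\X(2)$ is diffeomorphic to the quotient $\hat{M}_4$ of \subsecref{subsec:general}, which is an Enriques surface (cf.\ the remark there and \cite{Habegger}). Alternatively one can compare Seiberg--Witten invariants: both $\X(2)$ and a standard Enriques surface have $b_+=1$, and a chamber-wise computation for the canonical $\Spinc$ structure pins down the smooth Enriques type.

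The main obstacle is the passage from homeomorphism to diffeomorphism. The topological classification for $\pi_1\cong\Z/2$ handles the homeomorphism type routinely and the Betti-number bookkeeping above is straightforward, but smooth $4$-manifolds are not classified, so one genuinely needs either the deformation-theoretic input (connectedness of the relevant moduli of real K3 surfaces with empty real part) or a gauge-theoretic argument strong enough to detect the smooth Enriques structure. I expect assembling that input, rather than the invariant computation, to be where the real work lies.
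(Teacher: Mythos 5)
Your approach is genuinely different from the paper's. The paper's proof is a one-shot hyperk\"ahler rotation argument: fix an $I$-invariant holomorphic $2$-form $\phi$ on $X(2)$, take the Ricci-flat K\"ahler metric (Calabi--Yau), and observe that $\phi$ and $\omega$ span a hyperk\"ahler $2$-sphere of complex structures on the same underlying smooth manifold. Since $\iota^*\omega=-\omega$ and $\iota^*\phi=\bar\phi$, there is a rotated complex structure $J'$ in this sphere for which $\iota$ becomes a \emph{holomorphic} free involution, and the quotient $X(2)/\iota$, with the induced $J'$-complex structure, \emph{is} an Enriques surface. Diffeomorphism is automatic since the underlying smooth $4$-manifold has not changed. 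This is short, self-contained, and sidesteps any classification theorem.

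Your route --- compute $\chi=12$, $\sigma=-8$, $\pi_1=\Z/2$, non-spin, $b_+=1$, invoke Hambleton--Kreck for the homeomorphism type, then upgrade to a diffeomorphism --- is correct at the level of invariant bookkeeping (the eigenvalue analysis of $\iota^*$ on $\langle\omega,\operatorname{Re}\Omega,\operatorname{Im}\Omega\rangle$ giving $b_+=1$ is right), but the last step is where it gets soft, and you yourself flag this. The two routes you sketch for the smooth upgrade are both weaker than what the paper does: (i) the appeal to ``$\X(2)$ is diffeomorphic to $\hat M_4$, which is Enriques'' pushes the whole problem onto the \emph{Remark} about $\hat M_4$, which the paper states without proof and which would in practice be established by the same hyperk\"ahler rotation --- so this is close to circular; and (ii) the Seiberg--Witten suggestion is vague, since $b_+=1$ chamber structure does not by itself yield a diffeomorphism classification. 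If you want to carry the deformation-theoretic route through honestly, what you actually need is the statement that any free anti-holomorphic involution on a K3 can be deformed (through such pairs) to one arising from a holomorphic involution on a rotated complex structure --- which is again essentially the hyperk\"ahler rotation in disguise. So while your computation of invariants is a useful sanity check, the paper's argument is the one that actually closes the diffeomorphism gap cleanly and without reference to classification theorems.
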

\begin{proof}(\cite{Donaldson} and
\cite{realenriques}, \S15.1.)
Take an $I$-invariant holomorphic form $\phi$ on $X(2)$.
By the Calabi-Yau theorem, there exists a unique K\"{a}hler-Einstein metric.
Then $\phi$ and $\omega$ induce a hyper-K\"{a}hler structure on $X(2)$.
There exists a complex structure for which $\iota$ is a holomorphic free involution.
Thus $\X(2)$ is an Enriques surface.
\end{proof}

Since $X(2n)$ is diffeomorphic to the fiber sum of $E(2)=K3$ with $E(2n-2)$, $\X(2n)$ is diffeomorphic to the fiber sum of the fibration $\hat{\varpi}\colon \X(2)\to \RP^2$ with $E(n-1)$.

\begin{Proposition}
If $k\equiv 2$ modulo $4$, then there exists a canonical $\Spincm$ structure $\hat{\mathfrak{s}}_0$ on $X(k)\to\X(k)$.
\end{Proposition}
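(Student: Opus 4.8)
The plan is to reduce to \corref{cor:canonical}: since $(X(k),\omega,\iota)$ is already a closed K\"ahler (hence real symplectic) $4$--manifold with empty real part, it suffices to verify hypotheses $(1)$ and $(2)$ of \thmref{thm:canonical} for the double cover $X(k)\to\X(k)$. Write $k=2n$ with $n$ odd, as $k\equiv 2\pmod 4$. Hypothesis $(2)$ is immediate: $X(2n)\cong E(2n)$ is simply connected and $\iota$ is free, so $\pi_1(\X(k))\cong\Z/2$; hence $H^1(X(k);\Z_2)=0$, $\pi^*\colon H^1(\X(k);\Z_2)\to H^1(X(k);\Z_2)$ is (trivially) surjective, and $H^1(\X(k);\Z_2)=\Z_2\langle w_1(\ell_{\R})\rangle$.

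The content is hypothesis $(1)$, $w_2(\X(k))+w_2(\hat K)+w_1(\ell_{\R})^2=0$ in $H^2(\X(k);\Z_2)$, which I would establish term by term. From the Gysin sequence of the double cover $\pi$ and $H^1(X(k);\Z_2)=0$ one obtains $\ker\bigl(\pi^*\colon H^2(\X(k);\Z_2)\to H^2(X(k);\Z_2)\bigr)=\{0,w_1(\ell_{\R})^2\}$. For $w_2(\hat K)$: the canonical class of $X(2n)=E(2n)$ is $c_1(K)=(2n-2)F$ with $F$ the fibre class of $\varpi$, and $\iota^*K=K^{-1}$; since $X(2n)$ is simply connected, $\mathcal{O}_{X}((n-1)F)$ then carries a real structure and descends to an $\R^2$--bundle $\hat G$ twisted along $\ell_{\R}$ with $\hat K=\hat G\,\hat{\otimes}\,\hat G$, so, using that $\pi^*$ is injective on $H^2(\X(k);\ell)$ (as $H^1(\X(k);\Z)=0$), $\tilde c_1(\hat K)=2\,\tilde c_1(\hat G)$ is $2$--divisible and its mod--$2$ reduction $w_2(\hat K)$ vanishes --- exactly as in the hypersurface case \propref{prop:w2}, where $4k-4$, here $2n-2$, is even (in fact divisible by $4$ precisely because $n$ is odd). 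Finally, for $w_2(\X(k))$: as $E(2n)$ is spin, $\pi^*w_2(\X(k))=w_2(X(k))=0$, so $w_2(\X(k))\in\{0,w_1(\ell_{\R})^2\}$, and it remains to show $\X(k)$ is not spin. For this I would use that $\X(2n)$ is the fibre sum of $\hat\varpi\colon\X(2)\to\RP^2$ with $E(n-1)$: deleting a tubular neighborhood of a torus fibre of $\hat\varpi$ from $\X(2)$, the class $w_2(\X(k))$ restricts to $w_2(\X(2))$, the mod--$2$ reduction of the $2$--torsion canonical class of the Enriques surface $\X(2)$, and this restriction does not vanish. Hence $w_2(\X(k))=w_1(\ell_{\R})^2$ and $(1)$ reads $w_1(\ell_{\R})^2+0+w_1(\ell_{\R})^2=0$, so the canonical $\Spincm$ structure exists.

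I expect the last step --- $w_2(\X(k))\neq 0$ --- to be the main obstacle, and it is there that $k\equiv 2\pmod 4$ is actually used: it forces $n$ odd, equivalently $E(n-1)$ spin, which is what keeps the non-spin-ness of the Enriques factor $\X(2)$ from cancelling in the fibre sum. (If $k\equiv 0\pmod 4$ one fibre--sums $\X(2)$ with a non-spin $E(n-1)$, and $\X(k)$ may become spin, in which case hypothesis $(1)$ fails.) Concretely the work is a long-exact-sequence-of-a-pair computation showing that the $2$--torsion class $c_1(K_{\X(2)})$ remains non--$2$--divisible, hence has nonzero mod--$2$ reduction, after removing a fibre neighborhood; everything else is formal.
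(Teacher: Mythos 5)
Your proposal is correct in structure and arrives at the same verification (conditions (1) and (2) of \thmref{thm:canonical}), but the route for the $w_2(\X(k))$ computation is genuinely different from the paper's. The paper first notes that $\X(4m+2)$ is non-spin with $\pi_1=\Z/2$ and the stated intersection form, then invokes the Hambleton--Kreck homeomorphism classification to present $\X(4m+2)$ as $\Sigma\#(2m+1)|E_8|\#(4m+1)(S^2\times S^2)$ with $\Sigma$ a rational homology $4$-sphere having torsion $w_2$; this shows $w_2(\X(4m+2))$ has a \emph{torsion integral lift}, and then the argument of \propref{prop:w2} (via \cite[\S1, Remark 3(2)]{N1}) identifies it with $w_1(\ell_{\R})^2$. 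You instead bypass Hambleton--Kreck entirely: you use the Gysin sequence of the double cover plus the spin-ness of $X(k)=E(k)$ to pin down $w_2(\X(k))\in\{0,w_1(\ell_{\R})^2\}$ directly, reducing the whole issue to the (true but, as you note, nontrivial) fact that $\X(k)$ is not spin. This is a more elementary and more self-contained argument; the trade-off is that the burden of showing $\X(k)$ is non-spin is made explicit rather than absorbed into the homeomorphism classification --- though the paper too asserts non-spin-ness (``it is easy to see'') without proof. Your treatment of $w_2(\hat K)$ matches the paper's: $c_1(K)=(k-2)F$ and $k\equiv 2\pmod 4$ gives $2$-divisibility of $\tilde c_1(\hat K)$ downstairs (the paper emphasizes this as the place where $k\equiv 2\pmod 4$ is used, whereas you emphasize the spin-ness of the fibre-sum companion $E((k-2)/2)$ --- both phenomena are in fact governed by the same congruence). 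In short: essentially correct, and a cleaner approach to the $w_2(\X)$ step than the paper's reliance on \cite{HK}, modulo the fibre-sum non-spin-ness argument which both proofs leave largely to the reader.
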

\begin{proof}
Since $\X(4m+2)$ is the fiber sum of $\hat{\varpi}\colon \X(2)\to \RP^2$ and $E(2m)$,
it is easy to see that $\X(4m+2)$ is a non-spin manifold with $\pi_1(\X(4m+2))=\Z/2$ whose intersection form is isomorphic to 
\[
(2m+1)(-E_8)\oplus (4m+1) H, 
\]
where $H$ is a hyperbolic form.
Then it follows from a result of Hambleton-Kreck \cite{HK} ({\it Cf.} \cite{Ue}) that $\X(4m+2)$ is homeomorphic to the connected sum
\[
\Sigma\# (2m+1)|E_8| \# (4m+1)(S^2\times S^2),
\] 
where $\Sigma$ is a rational homology $4$-sphere such that $\pi_1(\Sigma)=\Z/2$ and $w_2(\Sigma)\neq 0$, and $|E_8|$ is the $E_8$-manifold, i.e., the simply-connected topological manifold whose intersection form is isomorphic to $-E_8$.
Since $|E_8|$ and $S^2\times S^2$ are spin and $w_2(\Sigma)$ is a torsion class,  $w_2(\hat{X}(4m+2))$ is a torsion class.

Since the canonical divisor $K$ of $X(n)$ is 
$K=(n-2) F$, where $F$ is a general fiber, we can see that $\tilde{c}_1(\hat{K})$ is divided by $2$ if $n=4m+2$.
The rest of the proof is similar to that of \propref{prop:w2}.
\end{proof}
Take an $I$-invariant divisor $D_k$ of $X(n)$ of the form
\[
D_k=\sum_{i=1}^k (F_i+IF_i),
\]
where $F_i$ are general fibers.
Let $E_k$ be the line bundle associated to $D_k$.
Then $E_k$ can be written as the pull-back $E_k=\varpi^*L$ where $\varpi\colon X(n)\to\CP^1$ is the elliptic fibration and $L$ is a line bundle over $\CP^1$ of degree $2k$.
Let $\hat{E}_k=E_k/I$. 

The next is an analogue of \cite[Proposition 4.2]{FM2} or \cite[Proposition 42]{Brussee}.
\begin{Theorem}\label{thm:elliptic}
Let $\X=\X(4m+2)$ and $\hat{\mathfrak{s}}_k=\hat{\mathfrak{s}}_0\hat{\otimes}\hat{E}_k$.
The moduli space $\hat{\M}(\X,\hat{\mathfrak{s}}_k)$ is orientable and the corresponding invariant is 
\[
\SW^{\Pin}(\X,\hat{\mathfrak{s}}_k) = \pm
\begin{pmatrix}
2m\\k
\end{pmatrix}.
\]
\end{Theorem}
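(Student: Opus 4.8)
The plan is to compute the $\Pin^-(2)$-monopole invariant by translating it, via the chain of identifications in \secref{sec:realKL}, into an enumerative count of $I$-invariant effective divisors on $X(4m+2)$, following the strategy of \cite{FM2,Brussee} for the ordinary Seiberg-Witten invariants of elliptic surfaces. First I would observe that $\hat{\mathfrak{s}}_k = \hat{\mathfrak{s}}_0\hat\otimes\hat{E}_k$ with $E_k = \varpi^*L$, $\deg L = 2k$, and compute the sign of the relevant pairing: since the canonical bundle is $K = (n-2)F = 4mF$ and $E_k = 2kF$ (as pull-backs from $\CP^1$), one has $L_{\mathfrak{s}} = 2E_k - K = (4k-4m)F$, so $L_{\mathfrak{s}}\cdot\omega$ is negative, zero, or positive according to whether $k<m$, $k=m$, or $k>m$. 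The cases $k<m$ give invariant $0$ (consistent with $\binom{2m}{k}$ only if... — actually one should use the conjugation symmetry \corref{cor:conj} together with $\hat{\mathfrak{s}}_k^c = \hat{\mathfrak{s}}_0\hat\otimes\hat{E}_k^c$ and $\tilde c_1(\hat K)=\tilde c_1(\hat E_{2m})$ to reduce to the range where $\Theta$ has a definite sign). Concretely, when $\Theta>0$ I would invoke \corref{cor:isom1}(1) to get $\hat{\M}(\X,\hat{\mathfrak{s}}_k)\cong\mathscr{D}(e)^I$ with $e=\tilde c_1(\hat E_k)$, and when $\Theta<0$ use \corref{cor:isom1}(2) to get $\mathscr{D}(k_{\hat K}-e)^I = \mathscr{D}(\tilde c_1(\hat E_{2m-k}))^I$, which matches the symmetry $\binom{2m}{k}=\binom{2m}{2m-k}$.

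The heart of the argument is then to identify $\mathscr{D}(\tilde c_1(\hat E_j))^I$ with the space of $I$-invariant effective divisors on $X(4m+2)$ in the class $jF$ — equivalently (by the projection formula and the fact that sections of $E_j=\varpi^*L$ come from sections of $L$ upstairs) with $I$-invariant effective divisors of degree $2j$ on $\CP^1$ that are pulled back. An $I$-invariant divisor of fiber type is a sum of fibers $\sum (F_{p_i}+\iota F_{p_i})$ over points $p_i$ together with possibly some fibers over the real locus $\RP^1\subset\CP^1$; but $\iota_0$ is the antipodal map on $\CP^1$, which is fixed-point-free, so there are no real fibers, and every $I$-invariant effective divisor of degree $2j$ on $\CP^1$ is a configuration of $j$ conjugate pairs of points. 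However this is where the binomial coefficient, rather than a single point, must come from: the multiple fibers. Just as in the simply-connected case $E(n)$ the Seiberg-Witten basic classes involve the multiple fibers and give $\binom{n-2}{k}$-type counts (cf. \cite[Proposition 4.2]{FM2}), here $X(4m+2)$ is the fiber sum of $\X(2)$-pullback with $E(2m)$, and the $2m$-fold structure (the "$E(2m)$ part") contributes multiple fibers; an $I$-invariant divisor linearly equivalent to $jF$ decomposes over the choice of which of the relevant multiple fibers to include, giving exactly $\binom{2m}{j}$ isolated points in $\mathscr{D}(\tilde c_1(\hat E_j))^I$. I would make this precise by the same cohomology computation as in \cite{FM2,Brussee}: show $H^1(\X;\pi^I\mathscr{O}_D(D))=0$ for each such $D$ (so $\mathscr{D}(e)^I$ is a finite set of reduced points by the Kuranishi model of \propref{prop:maxdiv} and the preceding propositions), and count the points.

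For orientability and the resulting $\Z$-valued invariant, I would argue as in the general-type case (\subsecref{subsec:general}): $b_1^\ell(\X)=0$, and one checks $\ind D_{\hat{\mathfrak{s}}_k}$ is even using $d(\hat{\mathfrak{s}}_k)=\tfrac12 d(\mathfrak{s}_k)=0$ (the virtual dimension vanishes since $L_{\mathfrak{s}}^2 = 0$ for fiber classes and the corresponding Seiberg-Witten dimension is $0$ on the $K3$-fiber-sum), hence $\ind D = b_+^\ell$, and $b_+^\ell = \tfrac12(1+b_+(\X(4m+2)))$ is an integer whose parity one reads off from the intersection form $(2m+1)(-E_8)\oplus(4m+1)H$; by \cite[Proposition 2.15]{N2} this gives orientability. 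The main obstacle I expect is the identification step and the multiplicity count: one must carefully track the $I$-equivariant structure through the fiber-sum decomposition $X(4m+2) = (\text{pullback of }\X(2))\#_F E(2m)$ to see that the $I$-invariant divisors in class $jF$ are organized by subsets of a $2m$-element set of multiple-fiber loci, and confirm (via the vanishing $H^1(\X;\pi^I\mathscr{O}_D(D))=0$) that each such divisor is an isolated, transversely-cut, correctly-oriented point — the sign ambiguity $\pm$ in the statement reflects that we do not pin down a global orientation. Everything else is a routine transcription of the arguments of \cite{FM2} and \cite{Brussee} into the $I$-invariant setting already set up in \secref{sec:realKL}.
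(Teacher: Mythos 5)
Your route via \corref{cor:isom1} and $\mathscr{D}(\tilde c_1(\hat E_k))^I$ has a genuine gap in the counting step. First, the surfaces $X(4m+2)$ of \subsecref{subsec:elliptic} have no multiple fibers --- they are resolutions of $(\Sigma_{4m+1}\times\Sigma_1)/\Z_2$ and are diffeomorphic to $E(4m+2)$ --- so the binomial coefficient cannot arise from the multiple-fiber combinatorics you invoke. Second, and more seriously, $\mathscr{D}(\tilde c_1(\hat E_k))^I$ is not a finite set: by \propref{prop:OXD} applied to $D$ a sum of $2k$ distinct general fibers, $H^0(\mathscr{O}_X(D))=\C^{2k+1}$, so $\mathscr{D}(e)$ is $\CP^{2k}$ and its $I$-invariant locus is a positive-dimensional real form whenever $k\geq 1$. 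With the closed $(1,1)$-form perturbation of \corref{cor:isom1} you would therefore be forced into an obstruction-bundle calculation over a positive-dimensional moduli space, as Friedman--Morgan and Brussee carry out for the ordinary Seiberg--Witten invariant; your proposal instead asserts that $\mathscr{D}(e)^I$ already consists of $\binom{2m}{k}$ reduced points, which is false. Note also that \propref{prop:maxdiv}, which you cite for transversality, concerns the space $\mathscr{D}_b(\Delta)^I$ of decompositions of a \emph{fixed} divisor $\Delta$, not the linear system $\mathscr{D}(e)^I$, so the two halves of your transversality argument refer to different moduli spaces.

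The paper avoids both problems by passing to Witten's perturbation, via \corref{cor:isom} rather than \corref{cor:isom1}: fix an $I$-invariant reduced canonical divisor $\Delta=D_m=\sum_{i=1}^{2m}(F_i+IF_i)$ over $2m$ distinct conjugate pairs of base points, take $\eta^{2,0}$ to be the corresponding $I$-invariant holomorphic $2$-form, and identify $\hat\M(\X,\hat{\mathfrak{s}}_k)$ with $\mathscr{D}_b(\Delta)^I$. This \emph{is} a finite set: the $I$-invariant decompositions $\Delta=D+D'$ with $D$ in class $kF$ are precisely the choices of a $k$-element subset of $\{1,\ldots,2m\}$, giving $\binom{2m}{k}$ points; each is isolated and cut out transversally by \propref{prop:maxdiv}, since the maximal common subdivisor $D_0$ of $D$ and $D'$ is empty. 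You correctly identify the orientation comparison as the delicate step; in the paper it occupies the bulk of the argument and is done by representing the orientation operator $R_{\mathfrak{t}}$ in explicit bases of the $I$-invariant meromorphic function spaces (spanned by $1$, $P_j^a$, $Q_l^b$) and checking by direct matrix computation that $\det R_{\mathfrak{t}}$ keeps a constant sign across all $\binom{2m}{k}$ solutions --- this is not a routine transcription of \cite{FM2} or \cite{Brussee}. Your orientability argument via the parity of $\ind D_{\hat A}$ together with \cite[Proposition 2.15]{N2} is correct and matches the paper.
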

The rest of this section is devoted to the proof of \thmref{thm:elliptic}.
For $q\in\CP^1$, let $q^*=\iota_0(q)$ where $\iota_0$ is the antipodal map. 
Choose $2m$ distinct points $q_1,\ldots,q_{2m}$ on $\CP^1$ such that all of $q_1,\ldots,q_{2m}$ and $q_1^*,\ldots,q_{2m}^*$ are distinct.
Let $F_i=\varpi^{-1}(q_i)$.
Then $IF_i=\varpi^{-1}(q_i^*)$, and  we obtain an $I$-invariant canonical divisor $D_m=\sum_{i=1}^{2m}(F_i+IF_i)$ of $X=X(4m+2)$. 
Let $\eta$ be the corresponding $I$-invariant holomorphic section on the canonical bundle $\mathscr{K}$.
By \corref{cor:isom},  the $\Pin^-(2)$-monopole moduli space  $\M(\X,\hat{\mathfrak{s}}_k)$ is identified with 
\[
\mathcal{V}_t(E,E^\prime,\Sigma,\eta)^I \cong \mathscr{M}^s(E,E^\prime,\mathscr{K},\eta)^I\cong \mathscr{D}_b(\Delta)^I,
\]
where $E^\prime = K\otimes E^{-1}$ and $\Delta= Z(\eta)$.
\begin{Lemma}
The moduli space $\M(\X,\hat{\mathfrak{s}}_k)$ is $0$-dimensional and orientable.
\end{Lemma}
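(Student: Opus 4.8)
The plan is to transport the computation to the holomorphic picture of \secref{sec:realKL} and combine three ingredients: the vanishing of the expected dimension, finiteness of the relevant space of divisors, and an index parity argument. First I would record that the expected dimension vanishes: since $\iota$ is free, $d(\hat{\mathfrak{s}}_k)=\frac12 d(\mathfrak{s}_k)$, where $\mathfrak{s}_k=\mathfrak{s}_0\otimes E_k$ is the canonical reduction of $\pi^*\hat{\mathfrak{s}}_k$. The determinant line bundle $L_k$ of $\mathfrak{s}_k$ has $c_1(L_k)=2c_1(E_k)-c_1(K)$, which is pulled back from $\CP^1$, so $c_1(L_k)^2=0$; and $2\chi+3\sigma=0$ for the elliptic surface $X(4m+2)=E(4m+2)$. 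Hence $d(\mathfrak{s}_k)=0$ and $d(\hat{\mathfrak{s}}_k)=0$.

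For the $0$-dimensionality I would use the identification $\M(\X,\hat{\mathfrak{s}}_k)\cong\mathscr{D}_b(\Delta)^I$, with $\Delta=Z(\eta)=D_m=\sum_{i=1}^{2m}(F_i+IF_i)$, already recorded above via \corref{cor:isom}; here the $q_i$ are chosen generic, so the $F_i$ and $IF_i$ are $4m$ distinct smooth fibers and $\Delta$ is reduced. Any element of $\mathscr{D}_b(\Delta)^I=\theta^{-1}(\Delta)$ is a pair $(D,D')$ of $I$-invariant effective divisors with $D+D'=\Delta$; in particular $D$ is an effective subdivisor of the reduced divisor $\Delta$, so $D=\sum_{i\in S}(F_i+IF_i)$ for some $S\subset\{1,\dots,2m\}$, and requiring $D$ to represent $\tilde{c}_1(\hat{E}_k)$ forces $|S|=k$. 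Thus $\mathscr{D}_b(\Delta)^I$, hence $\M(\X,\hat{\mathfrak{s}}_k)$, is a finite set, of cardinality at most $\binom{2m}{k}$. Moreover, since $\Delta$ is reduced the maximal common effective subdivisor $D_0$ of $D$ and $D'$ is empty, so by \propref{prop:maxdiv} the Zariski tangent space $T_{(D,D')}(\mathscr{D}_b(\Delta)^I)\cong H^0(\pi^I\mathscr{O}_{D_0}(D_0))$ vanishes and every point is unobstructed; hence $\M(\X,\hat{\mathfrak{s}}_k)$ is a smooth compact $0$-manifold.

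For orientability I would apply the criterion of \cite[Proposition 2.15]{N2}. Since $X(4m+2)$ is simply connected, $b_0^\ell=b_1^\ell=0$ for $\X(4m+2)$, and it then suffices that the Dirac index $\ind D$ of $\hat{\mathfrak{s}}_k$ be even. From $0=d(\hat{\mathfrak{s}}_k)=\ind D-(b_0^\ell-b_1^\ell+b_+^\ell)$ we get $\ind D=b_+^\ell$. As $\iota$ is an orientation-preserving isometry of $(H^2(X(4m+2);\R),\cup)$, the $(-1)$-eigenspace of $\iota^*$ is $H^2(\X(4m+2);\ell_{\R})$ and $b_+(X(4m+2))=b_+(\X(4m+2))+b_+^\ell$. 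Combining $b_+(X(4m+2))=b_+(E(4m+2))=8m+3$ with $b_+(\X(4m+2))=4m+1$, the latter read off from the homeomorphism type $\Sigma\#(2m+1)|E_8|\#(4m+1)(S^2\times S^2)$ established above, we obtain $b_+^\ell=4m+2$, which is even. Therefore $\M(\X,\hat{\mathfrak{s}}_k)$ is orientable.

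The step I expect to be the main obstacle is the second one: verifying carefully that the identification of \corref{cor:isom} realizes $\M(\X,\hat{\mathfrak{s}}_k)$ as precisely the set of splittings of the \emph{reduced} canonical divisor $D_m$, and that this reducedness is what forces $D_0=0$ and hence unobstructedness via \propref{prop:maxdiv}. Once that is in place, the expected-dimension and index-parity computations are routine bookkeeping with the Chern classes and Betti numbers already at hand, and they will also feed into the subsequent count $\SW^{\Pin}(\X,\hat{\mathfrak{s}}_k)=\pm\binom{2m}{k}$.
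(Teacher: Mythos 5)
Your proof is correct and follows the same essential strategy as the paper's, with two minor variations worth noting. For the claim of $0$-dimensionality, the paper's own proof of this lemma only computes the \emph{virtual} dimension $d(\hat{\mathfrak{s}}_k)=\tfrac12 d(\mathfrak{s}_k)=0$ via $c_1(L)^2=0$ and $2e+3\tau=0$, and then defers the actual non-degeneracy to the subsequent proposition (``Every $\Pin^-(2)$-monopole solution corresponding to a divisor $D\in\mathscr{D}_b(\Delta)^I$ is non-degenerate''), which is proved exactly by your observation that the reducedness of $\Delta=D_m$ forces the common subdivisor $D_0$ to be empty, so that \propref{prop:maxdiv} kills the Zariski tangent space. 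You have folded this into the lemma itself, which is fine and arguably cleaner. For the orientability you take a slightly more circuitous route: from $0=d(\hat{\mathfrak{s}}_k)=\ind D-(b_0^\ell-b_1^\ell+b_+^\ell)$ you deduce $\ind D=b_+^\ell$ and then compute $b_+^\ell=b_+(X)-b_+(\X)=(8m+3)-(4m+1)=4m+2$, whereas the paper computes $\ind D_{\hat{A}}=\tfrac12\cdot\tfrac14\bigl(c_1(L)^2-\tau(X)\bigr)=4m+2$ directly from the index formula; both yield the even number $4m+2$, so \cite[Proposition 2.15]{N2} applies either way. (One small imprecision: $b_0^\ell=0$ comes from $\ell$ being nontrivial, not from simple connectedness; $b_1^\ell=0$ does follow from $\pi_1(X)=1$.) The identification of $\M(\X,\hat{\mathfrak{s}}_k)$ with the set of $I$-invariant splittings of the reduced divisor $D_m$, which you flagged as the main obstacle, is indeed exactly what the paper establishes via \corref{cor:isom} and then uses to get the count $\binom{2m}{k}$ and the non-degeneracy; your handling of this is sound.
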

\begin{proof}
Let $\mathfrak{s}_k$ be the $\Spinc$ structure on $X$ of the canonical reduction of $\pi^*\hat{\mathfrak{s}}_k$.
Note that  $c_1(L)^2=0$, $\tau(X)=-16(2m+1)$ and $2e(X)+3\tau(X)=0$ , where $L=K^{-1}\otimes E_k^2$ is the determinant line bundle, and  $e(X)$ and $\tau(X)$ are the Euler characteristic and signature of $X$.
Then the virtual dimension $d(\mathfrak{s}_k)$ of the Seiberg--Witten moduli space of $(X,\mathfrak{s}_k)$ is 
\[
d(\mathfrak{s}_k) = \frac14(c_1(L)^2-2e(X)-3\tau(X))=0.
\] 
Then we have $d(\hat{\mathfrak{s}_k})=d(\mathfrak{s}_k)/2 =0$.

Since the index of the Dirac operator $D_{\hat{A}}$ on $\hat{\mathfrak{s}}_k$ is a half of that on $\mathfrak{s}$, we have
\[
\ind D_{\hat{A}} = \frac12\left(\frac14(c_1(L)^2-\tau(X))\right) = 4m+2.
\]
Especially, $\ind D_{\hat{A}}$ is even.
Then  the moduli space is orientable by  \cite[Proposition 2.15]{N2}.
\end{proof}
\begin{Proposition}
Every $\Pin^-(2)$-monopole solution corresponding to a divisor $D\in \mathscr{D}_b(\Delta)^I$ is non-degenerate.
\end{Proposition}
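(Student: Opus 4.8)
The plan is to reduce the non-degeneracy of the monopole solution attached to $D\in\mathscr{D}_b(\Delta)^I$ to the vanishing of a Zariski tangent space, and then to make that tangent space explicit using the fact that $\Delta$ is a sum of mutually disjoint fibres. By \corref{cor:isom} such a solution is identified with a pair $(D,D')$ of $I$-invariant effective divisors on $X$ with $D+D'=\Delta=Z(\eta)$, $D$ lying in the class of $\hat{E}_k$. The solution is irreducible, since the corresponding holomorphic data satisfies $\phi\otimes\phi'=\eta\neq 0$; by the preceding lemma the virtual dimension $d(\hat{\mathfrak{s}}_k)$ is $0$, and since $X$ is simply connected we have $b_1^\ell(\X)=0$. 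Hence the deformation operator of the Witten-perturbed $\Pin^-(2)$-monopole equation at this solution is Fredholm of index $0$, so it is an isomorphism — that is, the solution is non-degenerate — exactly when its kernel vanishes. Under the chain of identifications of \secref{sec:realKL} this kernel is the Zariski tangent space $T_{(D,D')}\bigl(\mathscr{D}_b(\Delta)^I\bigr)$, which by \propref{prop:maxdiv} is isomorphic to $H^0\bigl(\pi^I\mathscr{O}_{D_0}(D_0)\bigr)$, where $D_0$ is the largest effective divisor with $D_0\le D$ and $D_0\le D'$.

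Next I would compute $D_0$. We have $\Delta=D_m=\sum_{i=1}^{2m}(F_i+IF_i)$, where $F_i=\varpi^{-1}(q_i)$ and $IF_i=\varpi^{-1}(q_i^{*})$ are smooth, hence irreducible, fibres of $\varpi$ over the $4m$ pairwise distinct points $q_1,\dots,q_{2m},q_1^{*},\dots,q_{2m}^{*}$ of $\CP^1$. Consequently any effective divisor $D\le\Delta$ has the form $\sum_i(a_iF_i+b_iIF_i)$ with $a_i,b_i\in\{0,1\}$, and $I$-invariance forces $a_i=b_i$; since $E_k=\varpi^{*}L$ with $\deg L=2k$, comparing first Chern classes forces $\sum_i a_i=k$. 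Thus $D=\sum_{i\in S}(F_i+IF_i)$ for a $k$-element subset $S\subseteq\{1,\dots,2m\}$, and $D'=\Delta-D=\sum_{i\notin S}(F_i+IF_i)$. In particular $D$ and $D'$ share no common prime component, because the fibres over the $4m$ distinct base points are disjoint; hence $D_0=0$.

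It then remains only to observe that $\mathscr{O}_{D_0}(D_0)=\mathscr{O}_X(0)/\mathscr{O}_X=0$ when $D_0=0$, so $H^0\bigl(\pi^I\mathscr{O}_{D_0}(D_0)\bigr)=0$ and the deformation operator is an isomorphism: the solution is non-degenerate. In passing this also shows that $\hat{\M}(\X,\hat{\mathfrak{s}}_k)\cong\mathscr{D}_b(\Delta)^I$ is a finite set of $\binom{2m}{k}$ reduced points, which is precisely what the subsequent signed count in \thmref{thm:elliptic} requires.

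The step demanding genuine care is the identification of $\ker$ of the analytic deformation operator of the Witten-perturbed $\Pin^-(2)$-monopole equation with the algebro-geometric tangent space $T_{(D,D')}(\mathscr{D}_b(\Delta)^I)$ of \propref{prop:maxdiv}; this is the $I$-invariant, $\Pin^-(2)$-version of Teleman's deformation-theoretic identifications (the analogue for Witten's perturbation of the correspondences proved in \secref{sec:realKL}), and it is where the substance lies — the divisor bookkeeping above being elementary once it is granted. A secondary, entirely routine, point is to confirm that no effective divisor $\le\Delta$ other than a sub-sum of the chosen fibres can occur, which follows from the irreducibility of general fibres and the fact that $\Delta$ is reduced.
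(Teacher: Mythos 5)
Your proof is correct and follows the paper's argument in all essentials: you identify the kernel of the deformation operator with the Zariski tangent space $T_{(D,D')}(\mathscr{D}_b(\Delta)^I)$, apply \propref{prop:maxdiv} with the observation that $D$ and $D_m-D$ are disjoint unions of distinct fibres so $D_0=0$, and then use $d(\hat{\mathfrak{s}}_k)=0$ together with the vanishing of $H^1$ to kill $H^2$. The paper's own proof is a compressed version of exactly this reasoning; your extra bookkeeping about $a_i=b_i$ and $\sum a_i=k$ just spells out why the disjointness claim holds.
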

\begin{proof}
Since $D_m-D$ and $D$ have no intersection for $D\in \mathscr{D}_b(\Delta)^I$, 
\propref{prop:maxdiv} implies that the first cohomology $H^1$ of the deformation complex of the solution corresponding to $D$ is $0$.
Therefore the second cohomology $H^2$ is also $0$ since $d(\hat{\mathfrak{s}_k}) =0$ and $H^1=0$.  
\end{proof}
For a subset $\{i_1,\ldots,i_k\}\subset \{1,\ldots,2m\}$, we have an $I$-invariant divisor $\sum_{j=1}^k (F_{i_j} + IF_{i_j})  \in\mathscr{D}_b(\Delta)^I$. Since this correspondence is bijective, the number of elements in  $\mathscr{D}_b(\Delta)^I$ is 
$\begin{pmatrix}
2m\\k
\end{pmatrix}.
$
In order to complete the proof of  \thmref{thm:elliptic}, we show that all of the divisors in $\mathscr{D}_b(\Delta)^I$ have same orientation.

\begin{Proposition}\label{prop:OXD}
For distinct points $b_1,\ldots,b_k\in\CP^1$, consider the divisor $B=b_1+\cdots+b_k$.
Let $F_j=\varpi^*(b_j)$ and  $D=F_1+\cdots +F_j$.
Then $\varpi^*\colon H^0(\mathscr{O}_{\CP^1}(B))\to H^0(\mathscr{O}_{X}(D))$ is an isomorphism, and  
\[
H^0(\mathscr{O}_X(D))=\C^{k+1},\quad H^1(\mathscr{O}_X(D))=0,\quad H^2(\mathscr{O}_X(D))=\C^{4m-k}
\]
\end{Proposition}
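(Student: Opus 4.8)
The key point is that $D$ is pulled back from the base. Since $F_j=\varpi^*(b_j)$, as divisors $D=\varpi^*B$, so $\mathscr{O}_X(D)\cong\varpi^*\mathscr{O}_{\CP^1}(B)$ as line bundles on $X=X(4m+2)$, and the whole computation reduces to cohomology on $\CP^1$ through $\varpi$. I would use two standard facts about this relatively minimal elliptic fibration: $\varpi$ has connected fibres, so $\varpi_*\mathscr{O}_X=\mathscr{O}_{\CP^1}$; and $K_X=\varpi^*\mathscr{O}_{\CP^1}(4m)$, which follows from $K_{E(n)}=(n-2)F$ together with $X(4m+2)\cong E(4m+2)$.

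For $H^0$, the projection formula gives $\varpi_*\mathscr{O}_X(D)=\mathscr{O}_{\CP^1}(B)\otimes\varpi_*\mathscr{O}_X=\mathscr{O}_{\CP^1}(B)$, and the Leray edge map realizing $H^0(X,\mathscr{O}_X(D))\cong H^0(\CP^1,\mathscr{O}_{\CP^1}(B))$ is precisely $\varpi^*$; since $B\geq0$ has degree $k$ this gives the asserted isomorphism onto $\C^{k+1}$. For $H^2$, Serre duality on $X$ identifies $H^2(X,\mathscr{O}_X(D))$ with the dual of $H^0(X,K_X\otimes\mathscr{O}_X(-D))=H^0\bigl(X,\varpi^*\mathscr{O}_{\CP^1}(4m-k)\bigr)$, which by the projection formula again equals $H^0(\CP^1,\mathscr{O}_{\CP^1}(4m-k))^{\vee}$, a space whose dimension is read off directly (using $4m-k\geq0$, valid for the range $0\le k\le 2m$ occurring in \thmref{thm:elliptic}).

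Finally $H^1$ is forced by Riemann--Roch on the surface: both $D$ and $K_X$ are supported on fibres, so $D^2=D\cdot K_X=0$ and hence $\chi(\mathscr{O}_X(D))=\chi(\mathscr{O}_X)=4m+2$; comparing this with the values of $h^0$ and $h^2$ already obtained yields $H^1(\mathscr{O}_X(D))=0$. An alternative to the last two steps is to run the Leray spectral sequence $H^p\bigl(\CP^1,\mathscr{O}_{\CP^1}(B)\otimes R^q\varpi_*\mathscr{O}_X\bigr)\Rightarrow H^{p+q}(X,\mathscr{O}_X(D))$ directly, with $R^1\varpi_*\mathscr{O}_X=\mathscr{O}_{\CP^1}(-(4m+2))$; it degenerates at $E_2$ because the base has cohomological dimension $1$ and the fibres are curves, and the three cohomology groups drop out of $H^*(\CP^1,\mathscr{O}_{\CP^1}(k))$ and $H^*(\CP^1,\mathscr{O}_{\CP^1}(k-4m-2))$. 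The only real work is the bookkeeping: pinning down the numerical invariants of the elliptic surface --- $K_X$, equivalently $\chi(\mathscr{O}_X)$ or $\deg R^1\varpi_*\mathscr{O}_X$ --- in terms of $m$; granting those, the cohomology computation is immediate.
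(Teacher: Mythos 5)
Your argument is correct, but it takes a genuinely different route from the paper. The paper works on $X$ itself via the divisor exact sequence $0\to\mathscr{O}_X\to\mathscr{O}_X(D)\to\mathscr{O}_D(D)\to 0$: since each $\mathscr{O}_{F_j}(D)$ is the holomorphic normal bundle of a fibre and is trivial, $H^0(\mathscr{O}_D(D))=H^1(\mathscr{O}_D(D))=\C^k$, and the long exact sequence together with $H^1(\mathscr{O}_X)=0$ then yields $h^0$; it gets $H^2$ by Serre duality and $H^1=0$ from the remaining part of the sequence. Your proof instead pushes everything down to $\CP^1$ via $\varpi_*\mathscr{O}_X=\mathscr{O}_{\CP^1}$, the projection formula, and Riemann--Roch on $X$, which is slicker and in particular gives a direct explanation of why $\varpi^*$ is an isomorphism (the Leray edge map), a point the paper's proof asserts but does not actually verify. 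One thing to note: both your route and the paper's Serre-duality step give $h^2(\mathscr{O}_X(D))=\dim H^0(\CP^1,\mathscr{O}(4m-k))=4m-k+1$, \emph{not} $4m-k$ as stated in the Proposition; the stated value is a typo. You can see this independently from your own Riemann--Roch computation, since with $h^0=k+1$, $h^1=0$, the relation $\chi(\mathscr{O}_X(D))=\chi(\mathscr{O}_X)=4m+2$ forces $h^2=4m-k+1$, whereas $4m-k$ would give $\chi=4m+1$. (The discrepancy is harmless for the paper's later use of the Proposition, which only needs $H^1=0$.)
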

\begin{proof}
Consider the short exact sequence 
\[
0\to\mathscr{O}_X\to\mathscr{O}_X(D)\to\mathscr{O}_D(D)\to0
\]
and its associated long exact sequence.
Note that $\mathscr{O}_{F_j}(D)$ is the holomorphic normal bundle of $F_j\subset X$.
Since this is holomorphicallly trivial and the genus of $F_j$ is $1$, we have
\[
H^0(\mathscr{O}_D(D))=H^1(\mathscr{O}_D(D))=\C^k.
\]
Since $X$ is simply-connected, $H^1(\mathscr{O}_X)=0$ and therefore
\[
H^1(\mathscr{O}_X(D))=H^0(\mathscr{O}_X)\oplus H^1(\mathscr{O}_D(D))=\C^{k+1}.
\]
By the Serre duality,
\[
H^2(\mathscr{O}_X(D)) = H^0(\mathscr{O}_X(K-D)) = \C^{4m-k}.
\]
Also we have $H^1(\mathscr{O}_X(D))=0$.
\end{proof}

Let $\mathfrak{t}=(B,\phi,\psi)\in \mathcal{A}(E,h)\times \Omega^{0,0}(E)\times \Omega^{2,0}(E^{-1})$ be an $I$-invariant solution to \eqref{eq:WV}, i.e., $((B,\phi),(B^\prime,\phi^\prime))$ where $B^\prime = \Sigma \otimes B^{-1}$ and $\phi^\prime = \psi$  is $I$-invariant and satisfies \eqref{eq:WV}.
The deformation complex at $\mathfrak{t}$ is given by
\begin{gather*}
\left(i\Omega^0_X\right)^I\overset{D_{\mathfrak{t}}^0}{\to}\left(i\Omega^1_X\oplus\Omega^{0,0}(E)\oplus\Omega^{2,0}(E^{-1})\right)^I\overset{D_{\mathfrak{t}}^1}{\to}\left(\Omega^{0,1}(E)\oplus i \left( \Omega_X^{0,2}\oplus\Omega^{1,1}_X\right)\right)^I,\\
D_{\mathfrak{t}}^0(f)=\begin{pmatrix}df\\ f\phi\\ -f\psi\end{pmatrix},\quad
D_{\mathfrak{t}}^1(\dot{B},\dot\phi, \dot\psi)=\begin{pmatrix}
\dot{B}^{0,1}\phi + w(\dot{B}^{0,1})^*\bar\psi + \bar\partial_B\dot\phi + \bar\partial^*_B \bar{\dot\psi}\\
\bar\partial \dot{B}^{0,1} - \frac12\bar\psi\bar{\dot\phi} - \frac12 \bar{\dot\psi}\bar{\phi}\\
\left\{\Lambda_g d \dot{B} -i[\mathrm{Re} (\dot\phi\bar\phi)-\mathrm{Re}(\bar\psi\dot\psi)]\right\}\omega 
\end{pmatrix},
\end{gather*}
where $w(\dot{B}^{0,1})^*$ is the adjoint of the multiplication operator $\dot{B}^{0,1}\wedge\cdot$.
The adjoint of $D_{\mathfrak{t}}^0$ is 
\[
\left(D_{\mathfrak{t}}^0\right)^*(\dot{B},\dot\phi, \dot\psi) = d^*\dot{B} - i\Ima(\dot\phi\bar\phi)+ i\Ima(\bar\psi\dot\psi).
\]
Set $\dot{b}=\sqrt2\dot{B}$, $U=\frac1{\sqrt2}\phi$, $V=\frac1{\sqrt2}\psi$. 
Replace $\dot{b}$ by $\dot{b}^{0,1}$ via the identification $i\Omega_X\cong \Omega^{0,1}$.
We introduce the operators $Q_{\mathfrak{t}}$ and $Q_{\mathfrak{t}}^0$ by
\begin{gather*}
Q_{\mathfrak{t}} = \begin{pmatrix}
-\id & 0 & 0& 0\\
0& \sqrt2\id & 0&0\\
0&0& \frac1{\sqrt2 i \omega} &0\\
0&0&0&\frac1{\sqrt2}
\end{pmatrix}
\begin{pmatrix}
D_{\mathfrak{t}}^1\\\left(D_{\mathfrak{t}}^0\right)^*
\end{pmatrix},\\
Q_{\mathfrak{t}}\begin{pmatrix}
\dot{b}\\ \dot\phi\\ \dot\psi
\end{pmatrix}
= \begin{pmatrix}
-b^{0,1}U - w(\dot{b}^{0,1})^* \bar{V} -\bar{\partial}_B\dot\phi-\bar{\partial}_B^*\bar{\dot\psi}\\
\bar{\partial}\dot{b}^{0,1} - \bar{V}\bar{\dot\phi} -\bar{\dot\psi}\bar{U} \\
\bar{\partial}^*\dot{b}^{0,1} - \dot\phi\bar{U} +\bar{V}\dot\psi 
\end{pmatrix},\\
Q_{\mathfrak{t}}^0\begin{pmatrix}
\dot{b}\\ \dot\phi\\ \dot\psi
\end{pmatrix}
= \begin{pmatrix}
 -\bar{\partial}_B\dot\phi-\bar{\partial}_B^*\bar{\dot\psi}\\
\bar{\partial}\dot{b}^{0,1}  \\
\bar{\partial}^*\dot{b}^{0,1} 
\end{pmatrix}.
\end{gather*}
Let $K_0$ and $C_0$ be the kernel and cokernel of $Q^0_{\mathfrak{t}}$, respectively,
\begin{align*}
K_0:=&\ker Q^0_{\mathfrak{t}} = \ker (\bar{\partial}_B+\bar{\partial}_B^*)^I\oplus \mathbb{H}^1(\mathscr{O}_X)^I,\\
C_0:= &\coker Q^0_{\mathfrak{t}} = \ker (\bar{\partial}^*_B\oplus\bar{\partial}_B)^I\oplus \mathbb{H}^2(\mathscr{O}_X)^I\oplus \mathbb{H}^0(\mathscr{O})^I.
\end{align*}
Note that $\mathbb{H}^1(\mathscr{O}_X)=0$ since $b_1(X)=0$.
By \propref{prop:OXD}, $\ker (\bar{\partial}_B^*\oplus\bar{\partial}_B) = H^1(\mathscr{O}_X(D))=0$. 
%

To see the orientation of the solution $\mathfrak{t}$, we consider
\[
\mathrm{pr}_{C_0}\circ (Q_{\mathfrak{t}}|_{K_0})\colon K_0=\ker (\bar{\partial}_B+\bar{\partial}_B^*)^I\to C_0=H^2(\mathscr{O}_X)^I\oplus H^0(\mathscr{O}_X)^I.
\]
This can be identified with
\begin{gather*}
R_{\mathfrak{t}}\colon H^0(D)^I\oplus H^0(K-D)^I\to H^0(K)^I\oplus H^0(\mathscr{O}_X)^I,\\
R_{\mathfrak{t}}=\begin{pmatrix}
\dot\phi\\ \dot\psi
\end{pmatrix}
=\begin{pmatrix}
-V\dot\phi-U\dot\psi\\
\int(-\langle\dot\phi,U\rangle+\langle\dot\psi,V\rangle)d\mathrm{vol}_g
\end{pmatrix}.
\end{gather*}
Note that $R_{\mathfrak{t}}$ is a linear isomorphism.
If we fix orientations of the domain and target of $R_{\mathfrak{t}}$, the orientation of the solution $\mathfrak{t}$ is determined by the sign of the determinant of $R_{\mathfrak{t}}$.
(See e.g. \cite{Teleman},  \cite{Nicolaescu}.)
We want to represent $R_{\mathfrak{t}}$ by a matrix with some explicit bases of $H^0(D)^I\oplus H^0(K-D)^I$ and $H^0(K)^I\oplus H^0(\mathscr{O}_X)^I$.

Consider a complex manifold $Z$ with a divisor $D$.
Let $\mathscr{L}_D$ be the holomorphic line bundle associated with $D$.
Then $H^0(\mathscr{L}_D)$ can be identified with the space of meromorphic functions $f$ such that $\lambda f$ are holomorphic for every local defining function $\lambda$ of $D$.
This space is denoted by $\mathfrak{M}(D)$.
Note that $1\in\mathfrak{M}(D)$  corresponds to the holomorphic section $\phi_D$ defined by the divisor $D$.

Suppose $\iota$ is an anti-holomorphic free involution on $Z$ and the divisor $D$ is $I$-invariant. 
The $I$-action on $H^0(\mathscr{L}_D)=\mathfrak{M}(D)$ is given by $f\mapsto \overline{\iota^*f}$.  


Consider $(Z,\iota)=(\CP^1,\iota_0)$  where $\iota_0$ is the antipodal map.
Choose $p\in\C\subset \CP^1$ such that $|p|=1$.
Let $p^*=\iota_0 (p)=-1/\bar{p} = -p$.
Take the $\iota_0$-invariant divisor $B=p + p^*$.
In terms of meromorphic functions, we can take the following for a complex basis of   $\mathfrak{M}(B)=H^0(\mathscr{L}_B)$.
\[
1,\quad \frac{z-p^*}{z-p},\quad \frac{z-p}{z-p^*}.
\]
We want to have a (real) basis for $\mathfrak{M}(B)^I=H^0(\mathscr{L}_B)^I$.
Via the projection to the $I$-invariant part, $f\mapsto \frac12(f+If)$, we  see that the following is a real basis for $\mathfrak{M}(B)^I=H^0(\mathscr{L}_B)^I$:
\[
1,\quad i\frac{z^2+p^2}{z^2-p^2},\quad \frac{2pz}{z^2-p^2}.
\]

Let us consider the case of $\varpi\colon X=X(4m+2)\to \CP^1$ with the antiholomorphic involution $\iota$.
Choose $2m$ distinct points $p_1,\ldots,p_k$, $q_1,\ldots,q_{2m-k}$ on $\CP^1$ such that $|p_j|=1$, $|q_l|=1$, and all of $p_j$, $p_j^*$, $q_l$, $q_l^*$ are distinct. 
Take the following divisors on $\CP^1$:
\[
B_D=\sum_{j=1}^k(p_j+p^*_j),\quad B_{K-D} = \sum_{l=1}^{2m-k}(q_l+q_l^*),\quad B_{K}=B_D+B_{K-D}.
\]
Then $K=\varpi^{*}B_K$ is a canonical divisor of $X$.
Let $D=\varpi^{*}B_D$. Then $K-D=\varpi^{*}B_{K-D}$.
Let
\[
P_j^1 =  i\frac{z^2+p_j^2}{z^2-p_j^2},\quad P_j^2 =  \frac{2p_jz}{z^2-p_j^2},\quad Q_l^1 =  i\frac{z^2+q_l^2}{z^2-q_l^2},\quad Q_l^2 =  \frac{2q_lz}{z^2-q_l^2}.
\]
Then $\{1,P_j^1,P_j^2\}$ ($j=1,\ldots,k$) is  a basis of $\mathfrak{M}(B_D)^I = H^0(B_D)^I\cong H^0(D)^I$.
Similarly, $\{1,Q_l^1,Q_l^2\}$  ($l=1,\ldots,2m-k$) and $ \{1,P_j^1,P_j^2,Q_l^1,Q_l^2\}$ ($j=1,\ldots,k$, $l=1,\ldots,2m-k$) are bases of $\mathfrak{M}(B_{K-D})^I = H^0(B_{K-D})^I\cong H^0(K-D)^I$ and $\mathfrak{M}(B_K)=H^0(B_K)^I\cong H^0(K)^I $, respectively.

Now the divisor $D$ corresponds to an $I$-invariant solution $\mathfrak{t}=(B,\phi,\psi)$ such that $\phi^{-1}(0)=D$ and $\psi^{-1}(0)=K-D$.
We may assume the following correspondence,
\begin{align*}
1\in \mathfrak{M}(B_D)^I &\longleftrightarrow U=\frac1{\sqrt2}\phi\in H^0(D)^I\\
1\in \mathfrak{M}(B_{K-D})^I &\longleftrightarrow V=\frac1{\sqrt2}\psi\in H^0(K-D)^I\\
1\in \mathfrak{M}(B_K)^I &\longleftrightarrow W:=U\otimes V \in H^0(K)^I
\end{align*}
and, for $f\in\mathfrak{M}(D)$, let $fU$ denote the  holomorphic section in $H^0(D)$ corresponding to $f$.

Let $\{e\}$ be a real basis of $H^0(\mathscr{O}_X)^I\cong\R$.
We choose the basis  for $H^0(D)^I\oplus H^0(K-D)^I$  as follows, 
\begin{equation}\label{eq:basis1}
\{U,P_j^1U,P_j^2U\}_{j=1,\ldots,k}\cup \{V,Q_l^1V,Q_l^2V\}_{l=1,\ldots,2m-k},
\end{equation}
and for $H^0(K)^I \oplus H^0(\mathscr{O}_X)^I$,
\begin{equation}\label{eq:basis2}
\{W,P_j^1W,P_j^2W,Q_l^1W,Q_l^2W\}_{j=1,\ldots,k}^{ l=1,\ldots,2m-k}\cup\{e\},
\end{equation}
With respect to the above bases, the isomorphism $R_{\mathfrak{t}}$ is represented by the matrix 
{\small
\[
\bordermatrix{&U&P^1_1U&P^2_1U&\cdots & V & Q^1_1V & Q^2_1V &\cdots\cr
W&-1& 0 &0&\cdots& -1 &0&0&\cdots\cr 
P^1_1W&0& -1 &0&\cdots& 0 &0&0&\cdots\cr 
P^2_1W&0& 0 &-1&\cdots& 0 &0&0&\cdots\cr 
\vdots & \vdots & \vdots & \vdots & \ddots & \vdots & \vdots & \vdots &\ddots \cr
Q^1_1W & 0 & 0 &0 & \cdots &0 & -1 & 0 & \cdots\cr
Q^2_1W & 0 & 0 &0 & \cdots &0 & 0 & -1 & \cdots\cr 
\vdots & \vdots & \vdots & \vdots & \ddots & \vdots & \vdots & \vdots &\ddots \cr
e & -\|U\|^2 & -\langle U,P_1^1U\rangle & -\langle U,P_1^2U\rangle & \cdots & \|V\|^2 & \langle V,Q_1^1V\rangle & \langle V,Q_1^2V\rangle &\cdots
}
\] }
It is easy to see that the determinant of the above matrix is $-\|U\|^2 -\|V\|^2$.

In order to prove any other solution $\mathfrak{t}^\prime$ has the same orientation with the solution $\mathfrak{t}$ for $D$ above, we need to prove that the determinant of the matrix $R_{\mathfrak{t}^\prime}$ associated with $\mathfrak{t}^\prime$ has the same sign with $\det R_\mathfrak{t}$.

For simplicity, we consider the case when $m=2$, $k=1$.
The generalization to other cases will be obvious. 
Take the solution $\mathfrak{t}$ corresponding to the divisors  
$D=\varpi^{*}(p_1+p_1^* + p_2+p_2^*)$ and $K-D=\varpi^{*}(q_1+q_1^* + q_2+q_2^*)$. 
Exchanging $p_1$ and $q_1$, we obtain another $I$-invariant solution $\mathfrak{t}^\prime$ corresponding to $D^\prime = \varpi^{*}(q_1+q_1^* +p_2+p_2^*)$ and $K-D^\prime = \varpi^{*}(p_1+p_1^*+q_2+q_2^*)$.
Let $U$, $V$ be the holomorphic sections for $\mathfrak{t}$, and $U^\prime$, $V^\prime$ for $\mathfrak{t}^\prime$.
Without loss of generality, we may assume $U^\prime$, $V^\prime$ are related with $U$, $V$ by
\[
U^\prime = \frac{p_1}{q_1}\frac{z^2-q_1^2}{z^2-p_1^2} U, \quad V^\prime = \frac{q_1}{p_1}\frac{z^2-p_1^2}{z^2-q_1^2} V.
\] 
The meaning of this  is as follows:
{\it When  the holomorphic section $U^\prime$ associated with the solution $\mathfrak{t}^\prime$   is considered as an element of $H^0(D)^I = \mathfrak{M}(D)^I$ {\rm (}not of $\mathfrak{M}(D^\prime)^I${\rm)}, it is represented by the meromorphic function $ \frac{p_1}{q_1}\frac{z^2-q_1^2}{z^2-p_1^2}$.}

We want to represent the map $R_{\mathfrak{t}^\prime}$
\[
R_{\mathfrak{t}^\prime}\begin{pmatrix}
\dot\phi\\ \dot\psi
\end{pmatrix}
=\begin{pmatrix}
-V^\prime\dot\phi-U^\prime\dot\psi\\
\int(-\langle\dot\phi,U^\prime\rangle+\langle\dot\psi,V^\prime\rangle)d\mathrm{vol}_g
\end{pmatrix}
\]
by a matrix with respect to the bases \eqref{eq:basis1}, \eqref{eq:basis2}.

Before that we note several useful relations.
For $p_j$ and $q_l$ with $|p_j|=|q_l|=1$, let $a$, $b$ be the real numbers such that 
$\dfrac{p_j}{q_l} = a + ib$.
Then we have the following relations.
\begin{align*}
P^1_jQ^1_l &= -1 + \frac{a}{b} P^1_j-\frac{a}{b}Q^1_l\\
P^1_jQ^2_l &=  \frac{1}{b} P^2_j-\frac{a}{b}Q^2_l\\
P^2_jQ^1_l &= - \frac{1}{b} Q^2_l+\frac{a}{b}P^2_j\\
P^2_jQ^2_l &= - \frac{1}{b} P^1_j+\frac{1}{b}Q^1_l
\end{align*}


Let $a,b,c,d,e,f$ be the real numbers such that 
\[
\frac{p_1}{q_1}=a+ib,\quad \frac{p_2}{q_1} = c+id,\quad \frac{p_1}{q_2} = e+if.
\]
Then we have $U^\prime = (a+bP^1_1)U$, $V^\prime = (a-bQ^1_1)V$, and the map $R_{\mathfrak{t}^\prime}$ is represented by the matrix
{\small
\[
\bordermatrix{&U&P^1_1U&P^2_1U&P^1_2U&P^2_2U & V & Q^1_1V & Q^2_1V &Q^1_2V & Q^2_2 V\cr
W&-a&-b& &-b& & -a& b &&b&\cr 
P^1_1W&&&&&&-b&-a&&-\frac{be}{f}&\cr 
P^2_1W&&&&&&&&-1&&-\frac{b}f\cr 
P^1_2W&&&&-\frac{ad-bc}{d}&&&&&&\cr 
P^2_2W&&&&&-\frac{ad-bc}{d}&&&&&\cr 
Q^1_1W &b&-a&&\frac{bc}{d}&&&&&&  \cr
Q^2_1W & &&-1&&-\frac{b}{d}&&&&&\cr 
Q^1_2W & &&&&&&&&-\frac{af-be}{f}&\cr
Q^2_2W & &&&&&&&&&-\frac{af-be}{f}\cr 
e & -u_0& -u_1^1 & -u^2_1 &-u^1_2 &-u^2_2 & v_0 & v^1_1 & v^2_1 &v^1_2 & v^2_2 }
\]}
where $u_0=\langle U,U^\prime\rangle$, $u^a_j=\langle P^a_jU,U^\prime\rangle$,
$v_0=\langle V,V^\prime\rangle$, $v^b_l=\langle Q^b_lV,V^\prime\rangle$.
Furthermore, we take one more {\it orientation-preserving} basis change given by
\begin{align*}
(U,P^1_1U)\begin{pmatrix} a & -b \\ b &a\end{pmatrix} &= (U^\prime,(-b+aP^1_1)U),\\
(V,Q^1_1V)\begin{pmatrix} a & b \\ -b &a\end{pmatrix} &= (V^\prime,(b+aQ^1_1)V).
\end{align*}
Then the matrix above is transformed into 
{\small
\[
\begin{pmatrix}
-1&& &-b& & -1&  & & b & \\
&&&&&&-1&&-\frac{be}{f}&\\ 
&&&&&&&-1&&-\frac{b}f\\
&&&-\frac{ad-bc}{d}&&&&&&\\ 
&&&&-\frac{ad-bc}{d}&&&&&\\ 
&-1&&\frac{bc}{d}&&&&&&  \\
 &&-1&&-\frac{b}{d}&&&&&\\
 &&&&&&&&-\frac{af-be}{f}&\\
 &&&&&&&&&-\frac{af-be}{f}\\ 
 -\|U^\prime\|^2& \ast & \ast & \ast &\ast & \|V^\prime\|^2 & \ast & \ast & \ast & \ast
\end{pmatrix}
\]}
where $\ast$ are some numbers of inner products of sections.
It is easy to see that the determinant of the above matrix is 
\[
-\left(\frac{ad-bc}{d}\right)^2\left(\frac{af-be}{f}\right)^2(\|U^\prime\|^2+\|V^\prime\|^2).
\]
Since $\det R_{\mathfrak{t}}$ and $\det R_{\mathfrak{t}^\prime}$ have the same sign, the orientation of the solution $\mathfrak{t}^\prime$ is same with that of $\mathfrak{t}$.

The above proof is easily generalized to the cases when $k\neq 1$ or $m\neq 2$, and we see that the orientation does not change by an exchange of some $p_j$ and $q_l$.
Thus \thmref{thm:elliptic} is proved.
%
%
\section{Concluding remarks}\label{sec:remarks}
%
%
\subsection{More examples with nontrivial $\Pin^-(2)$-monopole invariants}
%
By using the gluing formulae in \cite{N2}, we obtain more examples with nontrivial $\Pin^-(2)$-monopole invariants.
Let $Z\to \hat{Z}$ be a nontrivial double covering which satisfies the following:
\begin{enumerate}
\item $b_+^{\ell^\prime}(\hat{Z})=0$ for $\ell^\prime=Z\times_{\deux}\Z$.
\item There is a $\Spincm$ structure $\mathfrak{s}^\prime$ on $Z\to\hat{Z}$ whose characteristic bundle $\hat{L}^\prime$ satisfies $\tilde{c}_1(\hat{L}^\prime)^2 = \sign(Z)$.
\end{enumerate} 
(For instance, a connected sum of several $S^2\times\Sigma_g$ and $S^1\times W$ has a double cover satisfying the above conditions \cite[\S1.2]{N2}, where $\Sigma_g$ is a Riemann surface with genus $g\geq 1$ and $W$ is a closed $3$-manifold.)
Then \cite[Theorem 3.11]{N2} implies that $\hat{M}_{4k}\#\hat{Z}$ and $\X(4m+2)\#\hat{Z}$ has nontrivial $\Pin^-(2)$-monopole invariants.

Furthermore, \cite[Theorem 3.13]{N2} implies that any connected sum $\hat{Y}_1\#\cdots\#\hat{Y}_N$ such that each $\hat{Y}_i$ is $\hat{M}_{4k}$ or $\X(4m+2)$ for any $k$ or $m$ has nontrivial $\Pin^-(2)$-monopole invariants.

As an application of the nontriviality of the $\Pin^-(2)$-monopole invariants, we have the adjunction inequality for local-coefficient classes \cite[Theorem 1.15]{N2}.
%
\subsection{Problems}
%
We suggest several problems for future researches.
\begin{itemize}
\item Generalize the results to the case of the real structures {\it with real parts}.
If we can drop the condition that $\iota$ is {\it free} in our story, then we might expect some applications to, say,  real algebraic geometry.
For this purpose, we need to generalize the notion of the $\Spincm$-structure.
\item Analogy of SW=Gr \cite{Taubes4}.
Can $\SW^{\Pin}$ be identified with some kind of {\it real } Gromov--Witten invariant?
{\it Cf.} Tian-Wang \cite{TW}.
\item What is the counter part of $\Pin^-(2)$-monopole equations in Donaldson theory?
Is there a version of Witten's conjecture between  $\Pin^-(2)$-monopole theory and  Donaldson theory?
More concretely, is $\SW^{\Pin}$ equivalent to some kind of Donaldson invariants?
\end{itemize}

\begin{bibdiv}
\begin{biblist}
\bib{BHPV}{book}{
  title={Compact complex surfaces},
  author={Barth, Wolf},
  author={Hulek, Klaus},
  author={Peters, Chris},
  author={Van de Ven, Antonius},
   series={Ergebnisse der Math. ihrer Grenzgebiete},
  volume={4},
  year={2015},
  publisher={Springer}
}

\bib{Biquard}{article}{
  title={Les {\'e}quations de Seiberg--Witten sur une surface complexe non K{\"a}hl{\'e}rienne},
  author={Biquard, Olivier},
  journal={Communications in Analysis and Geometry},
  volume={6},
  number={1},
  pages={173--197},
  year={1998},
  publisher={International Press of Boston}
}

\bib{Brussee}{article}{
  title={The canonical class and the $C^\infty$ properties of K{\"a}hler surfaces},
  author={Brussee, Rogier},
  journal={New York J. Math},
  volume={2},
  pages={103--146},
  year={1996}
}

\bib{realenriques}{book}{
  title={Real Enriques surfaces},
  author={Degtyarev, Alexander},
  author={Itenberg, Ilia},
  author={Kharlamov, Viatcheslav},
  series={Lecture Notes in Math.},
  volume={1746},
  year={2000},
  publisher={Springer}
}

\bib{Donaldson} {article}{
    AUTHOR = {Donaldson, S. K.},
     TITLE = {Yang--{M}ills invariants of four-manifolds},
 BOOKTITLE = {Geometry of low-dimensional manifolds, 1 ({D}urham, 1989)},
    SERIES = {London Math. Soc. Lecture Note Ser.},
    VOLUME = {150},
     PAGES = {5--40},
 PUBLISHER = {Cambridge Univ. Press, Cambridge},
      YEAR = {1990},
}

\bib{FM1}{article}{
  title={Algebraic surfaces and Seiberg--Witten invariants},
  author={Friedman, Robert},
  author={Morgan, John W},
  journal={J. Algebraic geom.},
  volume={6},
  pages={445--479},
  year={1997},
}

\bib{FM2}{article}{
  title={Obstruction bundles, semiregularity, and Seiberg--Witten invariants},
  author={Friedman, Robert},
  author={Morgan, John W},
  journal={Communications in Analysis and Geometry},
  volume={7},
  number={3},
  pages={451--495},
  year={1999},
  publisher={International Press of Boston}
}

\bib{Fujiki}{article}{
  title={K{\"a}hlerian normal complex surfaces},
  author={Fujiki, Akira},
  journal={Tohoku Mathematical Journal, Second Series},
  volume={35},
  number={1},
  pages={101--117},
  year={1983},
  publisher={Mathematical Institute, Tohoku University}
}

\bib{GS}{book}{
  title={$4$-manifolds and Kirby calculus},
  author={Gompf, Robert E},
  author={Stipsicz, Andr{\'a}s I},
  series={Graduate Studies in Math.},
  volume={20},
  year={1999},
  publisher={American Mathematical Society Providence, RI}
}

\bib{Groth}{article}{
  title={Sur quelques points d'alg{\`e}bre homologique},
  author={Grothendieck, Alexandre},
  journal={Tohoku Mathematical Journal, Second Series},
  volume={9},
  number={2},
  pages={119--183},
  year={1957},
  publisher={Mathematical Institute, Tohoku University}
}

\bib{Habegger}{article}{
  title={Une variet\'{e} de dimension $4$ avec forme d'intersection paire et signature $-8$},
  author={Habegger, Nathan},
  journal={Comm.~Math.~Helv.},
  volume={57},
  pages={22--24},
  year={1982}
}  
\bib{HK}{article}{
  title={Smooth structures on algebraic surfaces with cyclic fundamental group},
  author={Hambleton, Ian},
  author={Kreck, Matthias},
  journal={Inventiones mathematicae},
  volume={91},
  number={1},
  pages={53--59},
  year={1988},
  publisher={Springer}
}

\bib{KW}{article}{
  title={Curvature functions for compact $2$-manifolds},
  author={Kazdan, Jerry L},
  author={Warner, Frank W},
  journal={Annals of Mathematics},
  pages={14--47},
  year={1974},
  publisher={JSTOR}
}

\bib{Kot}{article}{
  title={The Seiberg--Witten invariants of symplectic four-manifolds},
  author={Kotschick, Dieter},
  journal={S{\'e}minaire Bourbaki},
  volume={38},
  pages={195--220},
  year={1995},
  publisher={Soci{\'e}t{\'e} Math{\'e}matique de France}
}

\bib{MS}{book}{
  title={Introduction to symplectic topology},
  author={McDuff, Dusa},
  author={Salamon, Dietmar},
  year={1998},
  publisher={Oxford University Press}
}

\bib{Morgan}{book}{
  title={The Seiberg--Witten Equations and Applications to the Topology of Smooth Four-Manifolds},
  author={Morgan, John W},
  series={Mathematical Notes Ser.}
  volume={44},
  year={2014},
  publisher={Princeton University Press}
}

\bib{N1}{article}{
   author={Nakamura, Nobuhiro},
   title={$\rm{Pin}^-(2)$-monopole equations and intersection forms with
   local coefficients of four-manifolds},
   journal={Math. Ann.},
   volume={357},
   date={2013},
   number={3},
   pages={915--939},
   issn={0025-5831},
}

\bib{N2}{article}{
  title={$\Pin^-(2)$-monopole invariants},
  author={Nakamura, Nobuhiro},
  journal={J. Differential Geom.},
  volume={101},
  number={3},
  pages={507--549},
  year={2015},
  publisher={Lehigh University}
}

\bib{Nicolaescu}{book}{
  title={Notes on Seiberg--Witten theory},
  author={Nicolaescu, Liviu I},
  series={Graduate Studies in Math.},
  volume={28},
  year={2000},
  publisher={American Mathematical Soc.}
}

\bib{OO}{article}{
  title={Simple singularities and symplectic fillings},
  author={Ohta, Hiroshi},
  author={Ono, Kaoru},
  journal={Journal of Differential Geometry},
  volume={69},
  number={1},
  pages={001--042},
  year={2005},
  publisher={Lehigh University}
}

\bib{silhol}{book}{
  title={Real algebraic surfaces},
  author={Silhol, Robert},
  series={Lecture Notes in Math.},
  volume={1392},
  year={2006},
  publisher={Springer}
}

\bib{Stieg}{article}{
  title={Equivariant sheaf cohomology},
  author={Stieglitz, Andreas},
  journal={manuscripta mathematica},
  volume={26},
  number={1-2},
  pages={201--221},
  year={1978},
  publisher={Springer}
}

\bib{Taubes1}{article}{
   author={Taubes, Clifford Henry},
   title={The Seiberg--Witten invariants and symplectic forms},
   journal={Math. Res. Lett.},
   volume={1},
   date={1994},
   number={6},
   pages={809--822},
   issn={1073-2780},
   review={\MR{1306023 (95j:57039)}},
   doi={10.4310/MRL.1994.v1.n6.a15},
}

\bib{Taubes2}{article}{
  title={More constraints on symplectic forms from Seiberg--Witten invariants},
  author={Taubes, Clifford Henry},
  journal={Math. Res. Lett},
  volume={2},
  number={1},
  pages={9--13},
  year={1995}
}

\bib{Taubes3}{article}{
  title={SW $\Rightarrow$ Gr: From the Seiberg--Witten Equations to Pseudo-Holomorphic Curves},
  author={Taubes, Clifford Henry},
  journal={Journal of the American Mathematical Society},
  volume={9},
  number={3},
  pages={845--918},
  year={1996},
  publisher={JSTOR}
}

\bib{Taubes4}{book}{
  title={Seiberg--Witten and Gromov invariants for symplectic $4$-manifolds},
  author={Taubes, Clifford Henry},
  year={2000},
  publisher={International Press Somerville, MA}
}

\bib{Teleman}{book}{
  TITLE = {{Introduction {\`a} la th{\'e}orie de jauge}},
  AUTHOR = {Teleman, Andrei},
  PUBLISHER = {{SMF}},
  SERIES = {Cours sp{\'e}cialis{\'e}s},
  PAGES = {191},
  YEAR = {2012},
}

\bib{TW}{article}{
  title={Orientability and real Seiberg--Witten invariants},
  author={Tian, Gang},
  author={Wang, Shuguang},
  journal={International Journal of Mathematics},
  volume={20},
  number={05},
  pages={573--604},
  year={2009},
  publisher={World Scientific}
}

\bib{Ue}{article}{
  title={Topology of elliptic surfaces},
  author={Ue, Masaaki},
  journal={Sugaku},
  volume={44},
  year={1992}
}
\bib{wang95van}{article}{
  title={A vanishing theorem for Seiberg--Witten invariants},
  author={Wang, Shuguang},
  journal={Math. Res. Letters},
  volume={2},
  pages={305--310},
  year={1995}
}

\end{biblist}
\end{bibdiv}

\end{document}